\newfont{\rsfsten}{rsfs10 scaled 1200}
\DeclareMathAlphabet{\mathcal}{OMS}{cmsy}{m}{n} 
\newcommand{\N} {\mathbb{N}}
\newcommand{\C} {\mathbb{C}}
\newcommand{\R} {\mathbb{R}}
\renewcommand{\geq}{\geqslant}
\renewcommand{\leq}{\leqslant}
\newcommand{\lone}{\ell_1}
\newcommand{\ltwo}{\ell_2}
\newcommand{\ltau}{\ell_{\tau}}
\newcommand{\sJ}{\mathcal{J}_{\tau}}
\newcommand{\sF}[1]{\mathcal{F}_{#1}(y)}
\newcommand{\defleft}{\mathrel{\mathop:}=}
\newcommand{\xtil}{\tilde{x}}
\newcommand{\wtil}{{w}}
\newcommand{\etil}{{\varepsilon}}
\newcommand{\vectornorm}[1]{\left\|#1\right\|}
\newcommand{\tol}{\textnormal{tol}}
\newcommand{\HR}[1]{{#1}}
\newcommand{\HRomit}{}
\newcommand{\Rev}[1]{{\color{black}{#1}}}
\newcommand{\figurespath}[1]{./#1-eps-converted-to}
\DeclareMathOperator*{\argmin}{arg\,min\,}
\DeclareMathOperator*{\supp}{supp}
\DeclareMathOperator*{\sign}{sign}
\DeclareMathOperator*{\diag}{diag}
\DeclareMathOperator*{\rank}{rank}
\DeclareMathOperator*{\subspace}{span}
\journalname{Computational Optimization and Applications}
\begin{document}

\title{Conjugate gradient acceleration of iteratively re-weighted least squares methods}


\author{Massimo Fornasier \and
Steffen Peter \and
Holger Rauhut \and
Stephan Worm
}


\institute{M. Fornasier \at 
Technische Universit\"at M\"unchen, Fakult\"at f{\"u}r Mathematik, Boltzmannstrasse 3, D-85748, Garching bei M\"unchen, Germany \\ 
\email{massimo.fornasier@ma.tum.de}
\and 
S. Peter \at
Technische Universit\"at M\"unchen, Fakult\"at f{\"u}r Mathematik, Boltzmannstrasse 3, D-85748, Garching bei M\"unchen, Germany \\
Tel.: +49-89-289-17482\\
\email{steffen.peter@ma.tum.de}
\and 
H. Rauhut \at 
RWTH Aachen University, Lehrstuhl C f\"ur Mathematik (Analysis), Pontdriesch 10,
D-52062, Aachen, Germany \\
\email{rauhut@mathc.rwth-aachen.de}
\and
S. Worm \at
Schlo{\ss}str. 34, D-53115 Bonn, Germany \\
\email{stephanworm@gmx.de}
}

\date{Received: date / Accepted: date}

\maketitle

\begin{abstract}
Iteratively Re-weighted Least Squares (IRLS) is a method for solving minimization problems involving non-quadratic cost functions, perhaps non-convex and non-smooth, which however can be described as the infimum over a family of quadratic functions. 
This transformation suggests an algorithmic scheme that solves a sequence of quadratic problems to be tackled efficiently by tools of numerical linear algebra. 
Its general scope and its usually simple implementation, transforming the initial non-convex and non-smooth minimization problem  into a more familiar and easily solvable quadratic optimization problem, make it a
versatile algorithm. 
However, despite its simplicity, versatility, and elegant analysis,  the complexity of IRLS 
 strongly depends on the way the solution of the successive quadratic optimizations is addressed. For the important special case of {\it compressed sensing} and sparse recovery problems in signal processing, we investigate theoretically and numerically how accurately one needs to solve the quadratic problems by means of the {\it conjugate gradient} (CG) method in each iteration in order to
 guarantee convergence. The use of the CG method may significantly speed-up the numerical solution of the quadratic subproblems, in particular,
 when fast matrix-vector multiplication (exploiting for instance the FFT) is available for the matrix involved.
In addition, we study convergence rates.
 Our modified IRLS method outperforms state of the art first order methods such as Iterative Hard Thresholding (IHT) or Fast Iterative Soft-Thresholding Algorithm (FISTA) in many situations, 
 especially in large dimensions. Moreover, IRLS is often able to recover sparse vectors from fewer measurements than required for IHT and FISTA. 
\keywords{Iteratively re-weighted least squares \and conjugate gradient method \and $\ltau$-norm minimization \and compressed sensing \and sparse recovery.}
\end{abstract}

\newpage
\section{Introduction}
\subsection{Iteratively Re-weighted Least Squares}

Iteratively Re-weighted Least Squares (IRLS) is a method for solving minimization problems
by transforming them into a sequence 
of easier quadratic problems which are then solved with efficient tools of numerical linear algebra. 
Contrary to classical Newton methods \HRomit smoothness of the objective function is not required in general. 
We refer to the recent paper \cite{OBD14} for an updated and rather general view about these methods.

\HR{In the context of constructive approximation}, 
an IRLS algorithm appeared for the first time in the doctoral thesis of Lawson in 1961 \cite{LW61} in the form of an algorithm for solving uniform approximation problems.
\HR{It computes a sequence of polynomials that minimize a sequence of weighted $L_\tau$--norms.}
This iterative algorithm is now well-known in classical approximation theory as Lawson's algorithm. In \cite{Cline72} it is proved that 
this algorithm \HR{essentially obeys} a linear convergence rate. 

In the 1970s extensions of Lawson's algorithm for $\ell_\tau$-norm minimization, and in particular $\ell_1$-norm minimization, were proposed. Since 
\HR{then} IRLS has become a rather popular method \HR{also} in mathematical statistics for robust linear regression \cite{HoWe77}. Perhaps the most comprehensive mathematical analysis of the performance of IRLS for $\ell_\tau$-norm minimization was given in the work of Osborne \cite{Osborne85}. 

The increased popularity of total variation minimization in image processing \HR{starting} with the pioneering work \cite{faosru92}, significantly revitalized the interest \HRomit 
in these algorithms, because of their \HRomit 
simple and intuitive implementation, contrary to more general optimization algorithms such as interior point methods. 
In particular, in \cite{ChambolleLions97,VO98} an IRLS \HR{for} 
total variation minimization has been proposed. 
\HRomit At the same time, IRLS appear\HR{ed} as well under the name of {\it {K}a\v{c}anov method} in \cite{hajesh97} as a fixed point iteration for the solution of certain quasi-linear elliptic partial differential equations.
In signal processing, IRLS was used as a technique to build algorithms for sparse signal reconstruction in \cite{GoRa}. 
\HR{A}fter the pioneering work \cite{chdosa99} and the starting of the development of {\it compressed sensing} with the seminal papers \cite{carota06,do06-2}, several works \cite{Chartrand07,Chartrand08,ChartrandYin08,dadefogu10} addressed systematically the analysis of IRLS  for $\ell_\tau$-norm minimization in the form 
 \begin{equation}
 \label{eq:ltauproblem}
\min\limits_{\Phi x= y} \|x\|_{\ltau},
\end{equation}
where $0 < \tau \leq 1$, $\Phi \in \mathbb C^{m\times N}$ is a given matrix, and $y\in \C^m$ a given measurement vector. 
In these papers, the asymptotic super-linear convergence of IRLS towards $\ell_\tau$-norm minimization for $\tau<1$ has been shown. As an extension of the analysis of the aforementioned papers, IRLS have been also generalized towards low-rank matrix recovery from minimal linear measurements \cite{FornasierRauhutWard11}.

In recent years, there has been an explosion of papers on applications and variations on the theme \HR{of} IRLS, especially in the engineering community of signal processing, and it is by now \HR{almost} impossible to give a complete account of the developments. (Presently Scholar Google reports more than 3180 papers since 2010 containing the \HR{phrase} ``Iteratively Re-weighted Least Squares'' and more than 100 with it in the title since 1970, half of which appeared after 2003.)

\subsection{Contribution of this paper}

Since it is based on a relatively simple reformulation of the initial \HR{potentially} non-convex and non-smooth minimization problem (for instance  of the type \eqref{eq:ltauproblem}) into a more familiar and easily solvable quadratic optimization, IRLS is one of the most immediate and intuitive approaches towards such non-standard optimizations and perhaps one of the first and popular algorithms beginner practitioners consider for their first experiments.
However, despite its simplicity, versatility, and elegant analysis, IRLS does not outperform in general well-established first order methods, which have been proposed recently for similar problems, such as Iterative Hard Thresholding (IHT)~\cite{Blumensath09} or Fast Iterative Soft-Thresholding Algorithm (FISTA)~\cite{beck09}, as we also show in our numerical experiments in Section \ref{sec:numerics}. In fact, its complexity very strongly depends on the way the solution of the successive quadratic optimizations is addressed, whether one uses preconditioned iterative methods and exploits fast matrix-vector multiplications or just considers simple direct linear solvers.
If the dimensions of the problem are not too large or the involved matrices have no special structure allowing for fast matrix-vector multiplications, \HRomit
then the use of a direct method such as Gaussian elimination can be appropriate. When instead the dimension of the problem is large and one can take advantage of the structure of the matrix to perform fast matrix-vector multiplications \HR{(e.g., for partial Fourier or partial circulant matrices)}, 
then it \HR{is appropriate} to use iterative solvers such as the Conjugate Gradient method (CG).
The use of CG in the implementation of IRLS is appearing, for instance, in \cite{VO98} towards total variation minimization and in \cite{sergei,davo15} towards $\ell_1$-norm minimization.  However, the price to pay is that such solvers will return only an approximate solution whose precision depends on the number of iterations\HR{. A} proper analysis of the convergence of the perturbed method in this case has not been reported in the literature. Without such an analysis it is impossible to give any estimate of the actual complexity of IRLS. Thus, the scope of this work is to clarify, specifically for compressed sensing problems (i.e., for matrices $\Phi$ with certain spectral properties such as the Null Space Property), how accurately one needs to solve the quadratic problems by means of CG \HR{in order} to guarantee convergence and 
possibly also asymptotic \HR{(super-)}linear \HR{convergence} rates.

Besides analyzing the effect of CG in an IRLS for problems of the type \eqref{eq:ltauproblem}, we further extend it in Section~\ref{sec:IRLSgenltau} to a class of problems of the type
\begin{equation}
\label{eq:l1lagrangian}
\min\limits_x\vectornorm{\Phi x - y }_{\ltwo}^2 + 2\alpha \vectornorm{x}_{\ltau},
\end{equation}
for $0 < \tau \leq 1$, used for sparse recovery in signal processing. In the work \cite{xulaiyin,sergei,davo15} a convergence analysis of IRLS towards the solution of \eqref{eq:l1lagrangian} has been carried out with two limitations:
\begin{enumerate}
\item In \cite{xulaiyin} the authors do not consider the use of an iterative algorithm to solve the appearing system of linear equations and they do not show the behavior of the algorithm when the measurements $y$ are given with additional noise; 
\item Also in \cite{sergei,davo15}  a precise analysis of convergence is \HR{missing} when iterative methods are used to solve the intermediate sequence of systems of linear equations. Also the non-convex case of $\tau<1$ is not specifically addressed.
\end{enumerate}
Regarding these gaps, we contribute in this work by
\begin{itemize}
\item giving a proper analysis of the convergence when inaccurate CG solutions are used;
\item extending the results of convergence in~\cite{sergei,davo15} to the case of $0 < \tau<1$ by combining our analysis with findings in~\cite{RamlauZarzer12,Zarzer09};
\item performing \HRomit 
numerical tests which 
\HR{evaluate} possible speedups 
\HR{via the CG method}, also taking problems into consideration 
where measurements may be affected by noise.
\end{itemize}
\HR{Our work on CG accelerated IRLS for \eqref{eq:l1lagrangian}}
does not \HR{analytically address rates of convergence} 
because \HR{this} turned out to be a very technical task. \\

We illustrate the theoretical results of this paper described above by several numerical experiments. 
We first show that 
\HR{our versions} of IRLS 
yield significant  improvements in terms
of computational time 
\HR{and may outperform} state of the art first order methods such as Iterative Hard Thresholding (IHT)~\cite{Blumensath09} and Fast Iterative Soft-Thresholding Algorithm (FISTA)~\cite{beck09}, especially in high dimensional problems ($N \geq 10^5$). These results are somehow both surprising and counterintuitive as it is well-known that first order methods should be preferred in higher dimension. However, they can be easily explained by observing that in certain regimes preconditioning in the conjugate gradient method (as we show at the end of Subsection \ref{CGIRLSl}) turns out to be extremely efficient. This is perhaps not a completely new discovery, as benefits of preconditioning in IRLS have been reported already in minimization problems involving total variation terms \cite{VO98}.
The second significant outcome of our experiments is that  CG-IRLS not only is faster than state of the art first order methods, but 
\HR{also shows higher recovery rates, i.e., requires less measurements for successful sparse recovery}. This will be demonstrated 
\HR{with} 
corresponding phase transition diagrams of empirical success rates (Figure \ref{fig:PT_diagram}).

\subsection{Outline of the paper}
The paper is organized as follows: In Section~\ref{sec:definitions}, we introduce definitions and notation and give a short review on the CG method. Although this brief introduction on CG retraces very well-known facts of the numerical linear algebra literature, it is necessary for us for the sake of a consistent presentation also in terms of notation. We hope that this small detour will help readers to access more easily the \HRomit 
technical parts of the paper. In Section \ref{sec:IRLSltau}, we present the IRLS method tailored to problems of the type~\eqref{eq:ltauproblem} and its modification including CG for the solution of the quadratic optimizations. We present a detailed analysis of the convergence and rate of convergence. The approach is further extended to problems of \HRomit 
type~\eqref{eq:l1lagrangian} in Section~\ref{sec:IRLSgenltau}, where we also analyze the convergence of the method. 
\HR{We conclude with} numerical experiments in Section~\ref{sec:numerics} showing that the modifications to IRLS inspired by our theoretical results make the algorithm extremely efficient, also compared to state of the art first order methods, especially in high dimension.


\section{Definitions, Notation, and Conjugate Gradient method}
\label{sec:definitions}
In this section, we introduce the main terms and notation used in this paper. In addition to this, we shortly review the basics around the Conjugate Gradient method. \Rev{For a  more detailed introduction to conjugate gradient methods, we refer to respective text books, e.g.,~\cite{nowr2006,sarisa00}}. In order to simplify cross-reading, we use the same notation as in \cite{dadefogu10}.

For matrices $\Phi\in\C^{m\times N}$ and $y\in\C^{m}$, we define
\begin{eqnarray}
 \sF{\Phi} &\defleft& \left\{ z \in \C^{N} \hspace{1mm} | \hspace{1mm} \Phi z = y\right\}, \label{def: sF} \\
 \mathcal{N}_{\Phi} &\defleft& \ker \Phi = \left\{ z \in \C^{N} \hspace{1mm} | \hspace{1mm} \Phi z = 0\right\}. \label{def: N}
\end{eqnarray}
\HR{Unless noted otherwise}, we denote with $\Phi^*$ the adjoint (conjugate transpose) matrix of a matrix $\Phi$. Thus, in the particular case of a scalar, $x^*$ denotes the complex conjugate of $x\in\C$.
\begin{definition}[Weighted $\ell_p$-spaces]
We define the quasi-Banach space $\ell_p^N(w)\defleft(\C^N,||\cdot||_{\ell_p(w)})$ endowed with the weighted quasi-norm
\[\vectornorm{x}_{\ell_p(w)}\defleft \left(\sum\limits_{i=1}^{N}|x_i|^{p}w_i\right)^{\frac1p},\]
for a weight vector $w\in\R^N$ with positive entries and $0 < p < \infty$. Furthermore, we define the $\ell_p^N$-spaces by setting $\ell_p^N\defleft\ell_p^N(\mathbf{1})$, where $\mathbf{1}$ 
\HR{denotes} the weight with entries identically set to $1$. Below we may ignore the superscript indicating the dimension $N$, when it is clear from the context, so that we write $\ell_p=\ell_p^N$ or $\ell_p(w) = \ell_p^N(w)$. \HR{The space $\ell_2^N(w)$ is a Hilbert space endowed with the weighted scalar product
\[
\langle x, y \rangle_{\ell_2(w)} = \sum_{i=1}^N x_i y_i^* w_i.
\]
In the unweighted case $w= \mathbf{1}$ it reduces to the standard  
complex scalar product $\langle\cdot,\cdot\rangle_{\ltwo}$.} 
\\
For $\Phi\in\C^{m\times N}$, we define the norm 
\[\vectornorm{\Phi}_{\ell_p^N \rightarrow\ell_q^m}\defleft\sup\limits_{\vectornorm{x}_{\ell_p^N} = 1}\vectornorm{\Phi x}_{\ell_q^m},\]
and for the particular case of $p= q = 2$, $\vectornorm{\Phi}:= \vectornorm{\Phi}_{\ell_2^N \rightarrow\ell_2^m}$ is the standard operator norm and can be given explicitly by 
\[\vectornorm{\Phi} = \sqrt{\lambda_{\max}(\Phi^*\Phi)},\]
where $\lambda_{\max}(\cdot)$ denotes the largest eigenvalue of a \HR{square} 
matrix (compare Definition~\ref{def:eig}).
\end{definition}

\begin{definition}[K-sparse vector]
A vector $x\in\C^N$ is called \emph{$K$-sparse} for $K \in \N$, $K \leq N$, if the number \HR{$\#\{\HR{i} |x_i\neq0\}$} of its non-zero entries does not exceed $K$. 
\end{definition}
\begin{definition}[Nonincreasing rearrangement]
\label{def:nir}
The \emph{nonincreasing rearrangement} $r(x)$ of the vector $x\in\C^N$ is defined by $r(x) := \left(|x_{i_1}|,\ldots,|x_{i_N}|\right)$ with $|x_{i_j}|\geq |x_{i_{j+1}}|$ for $j=1,\ldots,N-1$ \HR{and where $j \mapsto i_j$ is a permutation of $\{1,\hdots,N\}$}. Furthermore, the \emph{\HR{best $K$-term} approximation error} $\sigma_K(x)_{\ell_\tau}$ 
\HR{in $\ell_\tau$} is given by
\[\sigma_K(x)_{\ell_\HR{\tau}} \defleft \inf\limits_{z\in\C^N,\;K\text{-sparse}}\vectornorm{x-z}_{\ell_\HR{\tau}}^\HR{\tau}  = \sum\limits_{j=K+1}^{N}|r_j(x)|^\HR{\tau},\quad 0 <  \HR{\tau} < \infty.\] 
\end{definition}

In this paper we restrict our attention to optimization problems of the type \eqref{eq:ltauproblem} for matrices $\Phi \in \C^{m\times N}$ for $m \leq N$ \Rev{having full rank, i.e., $\rank(\Phi)=m$, and} certain spectral properties. Such matrices are used in the practice of {\it compressed sensing} and we refer to \cite{fora13} for more details. The following notion has been introduced in \HR{\cite{Chartrand07,Chartrand08,ChartrandYin08,grni03,codade09,dadefogu10}}.

\begin{definition}[Null Space Property (NSP)]
\label{def: NSP}
A matrix $\Phi \in\C^{m\times N}$ satisfies the \textit{Null Space Property} of order \HR{$K$} for $\gamma_\HR{K}>0$ and fixed $0 < \tau \leq 1 $ if
\begin{equation}\label{eq: NSP}
\left\|\eta_{T}\right\|_{\ltau}^{\tau} \le \gamma_\HR{K}\left\|\eta_{T^{c}}\right\|_{\ltau}^{\tau},
\end{equation}
for all sets $T \subseteq \{1,\ldots,N\}$ with $\#T \le \HR{K}$ and all $\eta\in\ker \Phi \backslash\{0\}$. We say in short that $\Phi$ has the $(\HR{K},\gamma_\HR{K})$-NSP.
\end{definition}

\HR{We give an important consequence of the NSP \cite{codade09,fora13}, \cite[Lemma 7.6]{dadefogu10}.}
\HR{
\begin{lemma}\label{lem:NSP:invtriangle} Assume that $\Phi \in\C^{m\times N}$ satisfies the $(K,\gamma_K)$-NSP 
for $0 < \tau \leq 1$. Then for
any vectors $z, z' \in \C^N$ it holds
\[
\|z' - z\|_{\ltau}^{\tau} \leq \frac{1+\gamma_K}{1-\gamma_K}\left( \|z'\|_{\ltau}^{\tau} - \|z\|_{\ltau}^{\tau} + 2 \sigma_K(z)_{\ell_\tau}\right).
\]
\end{lemma}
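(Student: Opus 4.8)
The plan is to reduce the inequality to a single application of the defining estimate \eqref{eq: NSP} to the difference vector $\eta \defleft z' - z$, which lies in $\mathcal{N}_\Phi = \ker\Phi$ (the situation in which the NSP has content, cf.\ \cite[Lemma 7.6]{dadefogu10}), together with two elementary properties of the map $x \mapsto \|x\|_{\ltau}^{\tau} = \sum_i |x_i|^{\tau}$ that hold because $0 < \tau \leq 1$: it is subadditive, $\|a+b\|_{\ltau}^{\tau} \leq \|a\|_{\ltau}^{\tau} + \|b\|_{\ltau}^{\tau}$ (hence also the reverse form $\|a+b\|_{\ltau}^{\tau} \geq \|a\|_{\ltau}^{\tau} - \|b\|_{\ltau}^{\tau}$), and it is additive over disjoint coordinate blocks, $\|x\|_{\ltau}^{\tau} = \|x_S\|_{\ltau}^{\tau} + \|x_{S^c}\|_{\ltau}^{\tau}$.

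First I would let $T$ be an index set of $K$ largest-modulus entries of $z$, so that by Definition~\ref{def:nir} one has $\|z_{T^c}\|_{\ltau}^{\tau} = \sigma_K(z)_{\ell_\tau}$ and $\|z_T\|_{\ltau}^{\tau} = \|z\|_{\ltau}^{\tau} - \sigma_K(z)_{\ell_\tau}$. Writing $z' = z + \eta$, splitting the norm over $T$ and $T^c$, and bounding each block from below by the reverse triangle inequality gives
\[
\|z'\|_{\ltau}^{\tau} \;\geq\; \big(\|z_T\|_{\ltau}^{\tau} - \|\eta_T\|_{\ltau}^{\tau}\big) + \big(\|\eta_{T^c}\|_{\ltau}^{\tau} - \|z_{T^c}\|_{\ltau}^{\tau}\big),
\]
which, after inserting the two identities above and rearranging, becomes
\[
\|\eta_{T^c}\|_{\ltau}^{\tau} - \|\eta_T\|_{\ltau}^{\tau} \;\leq\; \|z'\|_{\ltau}^{\tau} - \|z\|_{\ltau}^{\tau} + 2\sigma_K(z)_{\ell_\tau}.
\]
Since $\#T \leq K$ and $\eta \in \ker\Phi$, \eqref{eq: NSP} yields $\|\eta_T\|_{\ltau}^{\tau} \leq \gamma_K \|\eta_{T^c}\|_{\ltau}^{\tau}$ (the case $\eta = 0$ being trivial), so the left-hand side is at least $(1-\gamma_K)\|\eta_{T^c}\|_{\ltau}^{\tau}$; dividing by $1-\gamma_K > 0$ bounds $\|\eta_{T^c}\|_{\ltau}^{\tau}$, and one last use of \eqref{eq: NSP} in the form $\|z'-z\|_{\ltau}^{\tau} = \|\eta_T\|_{\ltau}^{\tau} + \|\eta_{T^c}\|_{\ltau}^{\tau} \leq (1+\gamma_K)\|\eta_{T^c}\|_{\ltau}^{\tau}$ produces exactly the factor $\tfrac{1+\gamma_K}{1-\gamma_K}$ and completes the argument.

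I do not anticipate a genuine obstacle: the proof is a short rearrangement. The two points that require attention are the sign bookkeeping — on $T$ one applies the reverse triangle inequality so as to retain $\|z_T\|_{\ltau}^{\tau}$ (giving up $\|\eta_T\|_{\ltau}^{\tau}$), while on $T^c$ one applies it so as to retain $\|\eta_{T^c}\|_{\ltau}^{\tau}$ (giving up $\|z_{T^c}\|_{\ltau}^{\tau}$), so the roles of $z$ and $\eta$ are exchanged between the two blocks — and ensuring that \eqref{eq: NSP} is invoked only on the null-space vector $\eta = z' - z$, which is why $z - z' \in \mathcal{N}_\Phi$ must be in force and why $0 < \gamma_K < 1$ is needed for the constant $\tfrac{1+\gamma_K}{1-\gamma_K}$ to be meaningful.
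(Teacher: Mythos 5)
Your proof is correct and is essentially the standard argument from the references the paper cites in place of a proof (\cite{codade09,fora13}, \cite[Lemma 7.6]{dadefogu10}): split over a set $T$ of $K$ largest entries of $z$, apply the subadditivity of $\|\cdot\|_{\ltau}^{\tau}$ in opposite directions on $T$ and $T^c$, and invoke \eqref{eq: NSP} on $\eta = z'-z$ twice. You are also right to flag that the argument needs $z'-z\in\mathcal{N}_{\Phi}$ and $\gamma_K<1$, hypotheses the paper's statement leaves implicit but which hold wherever the lemma is applied (e.g.\ with $z,z'\in\sF{\Phi}$ and $\gamma<1$).
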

It follows immediately from this lemma that the solution $x^\sharp$ of $\ell_\tau$-minimization \eqref{eq:ltauproblem} run on $y = \Phi x$ satisfies 
$\| x^\sharp - x\|_{\ltau}^{\tau} \leq \frac{2(1+\gamma_K)}{1-\gamma_K}\sigma_K(z)_{\ell_\tau}$. Another consequence is the following statement, 
see \cite[Lemma 4.3]{dadefogu10} for the case $\tau = 1$.

\begin{lemma}\label{lem: NSP}
Assume that $\Phi$ has the $(K,\gamma_K)$-NSP \eqref{eq: NSP}. Suppose that $\sF{\Phi}$ contains a $K$-sparse vector $x^*$. 
Then this vector is the unique $\ltau$-minimizer in $\sF{\Phi}$. Moreover we have for all $v\in\sF{\Phi}$
\begin{equation}\label{eq: NSP_distance_bound}
	\|v-x^{*}\|_{\ltau}^{\tau} \le 2\frac{1+\gamma_K}{1-\gamma_K}\sigma_{K}(v)_{\ltau}.
\end{equation}
\end{lemma}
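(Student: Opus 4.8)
The plan is to derive both the optimality/uniqueness claim and the stability estimate~\eqref{eq: NSP_distance_bound} directly from Lemma~\ref{lem:NSP:invtriangle}, applied to pairs of vectors lying in the affine fiber $\sF{\Phi}$. Two observations make this work: the difference of any two elements of $\sF{\Phi}$ lies in $\ker\Phi$, so Lemma~\ref{lem:NSP:invtriangle} is applicable to such pairs; and the best $K$-term error of the sparse vector vanishes, $\sigma_K(x^*)_{\ltau}=0$, which removes the approximation term whenever $x^*$ plays the role of the ``$z$'' argument in that lemma.

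\emph{Optimality and uniqueness.} Let $v\in\sF{\Phi}$ be arbitrary. Since $v-x^*\in\ker\Phi$, I would apply Lemma~\ref{lem:NSP:invtriangle} with $z=x^*$ and $z'=v$; using $\sigma_K(x^*)_{\ltau}=0$ this yields
\[
\|v-x^*\|_{\ltau}^{\tau}\;\le\;\frac{1+\gamma_K}{1-\gamma_K}\Bigl(\|v\|_{\ltau}^{\tau}-\|x^*\|_{\ltau}^{\tau}\Bigr).
\]
The constant $\frac{1+\gamma_K}{1-\gamma_K}$ is positive (under the standing assumption $\gamma_K<1$) and the left-hand side is nonnegative, so $\|x^*\|_{\ltau}\le\|v\|_{\ltau}$; hence $x^*$ is an $\ltau$-minimizer in $\sF{\Phi}$. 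If $v$ is itself a minimizer, then $\|v\|_{\ltau}=\|x^*\|_{\ltau}$, the right-hand side above vanishes, and $\|v-x^*\|_{\ltau}^{\tau}\le 0$ forces $v=x^*$, which gives uniqueness.

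\emph{Stability estimate.} For a general $v\in\sF{\Phi}$ I would now apply Lemma~\ref{lem:NSP:invtriangle} with the roles reversed, $z=v$ and $z'=x^*$ (again $x^*-v\in\ker\Phi$), to obtain
\[
\|x^*-v\|_{\ltau}^{\tau}\;\le\;\frac{1+\gamma_K}{1-\gamma_K}\Bigl(\|x^*\|_{\ltau}^{\tau}-\|v\|_{\ltau}^{\tau}+2\sigma_K(v)_{\ltau}\Bigr).
\]
By the optimality just established, $\|x^*\|_{\ltau}^{\tau}-\|v\|_{\ltau}^{\tau}\le 0$, and since $\|x^*-v\|_{\ltau}^{\tau}=\|v-x^*\|_{\ltau}^{\tau}$ this is exactly~\eqref{eq: NSP_distance_bound}.

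Given Lemma~\ref{lem:NSP:invtriangle}, the argument is little more than bookkeeping, so there is no serious obstacle; the points that need attention are the \emph{orientation} in which Lemma~\ref{lem:NSP:invtriangle} is invoked in each step (its right-hand side is not symmetric in $z$ and $z'$, although $\|z-z'\|_{\ltau}$ is), the vanishing of $\sigma_K(x^*)_{\ltau}$ by $K$-sparsity, and the hypothesis $\gamma_K<1$, which makes the constant positive and legitimizes the uniqueness step. If one preferred a self-contained proof, the genuine work would be to reprove Lemma~\ref{lem:NSP:invtriangle}: with $\eta:=v-x^*$ and $T:=\supp(x^*)$, $\#T\le K$, one combines the $\tau$-triangle inequality on $T$ (yielding $\|v_T\|_{\ltau}^{\tau}\ge\|x^*\|_{\ltau}^{\tau}-\|\eta_T\|_{\ltau}^{\tau}$, hence $\|\eta_{T^c}\|_{\ltau}^{\tau}\le\|v\|_{\ltau}^{\tau}-\|x^*\|_{\ltau}^{\tau}+\|\eta_T\|_{\ltau}^{\tau}$) with the NSP bound $\|\eta_T\|_{\ltau}^{\tau}\le\gamma_K\|\eta_{T^c}\|_{\ltau}^{\tau}$ and rearranges, exactly as in \cite[Lemma 4.3]{dadefogu10}; but that is already the content of the preceding lemma, so nothing new is needed here.
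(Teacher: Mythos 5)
Your proof is correct and is essentially the argument the paper intends: Lemma~\ref{lem: NSP} is stated there as a direct consequence of Lemma~\ref{lem:NSP:invtriangle} (with a citation to \cite[Lemma 4.3]{dadefogu10} in place of a written proof), and your two applications of that lemma --- first with $z=x^*$, $z'=v$ using $\sigma_K(x^*)_{\ltau}=0$ to get minimality and uniqueness, then with the roles reversed and the just-established inequality $\|x^*\|_{\ltau}^{\tau}\le\|v\|_{\ltau}^{\tau}$ to get \eqref{eq: NSP_distance_bound} --- are exactly that deduction. You are also right, and slightly more careful than the paper's wording of Lemma~\ref{lem:NSP:invtriangle}, to insist that it is applied to pairs whose difference lies in $\ker\Phi$ and that $\gamma_K<1$ is needed for the constant.
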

}
\HR{It is well-known that the NSP for $0< \tau \leq 1$ can be shown via the restricted isometry property \cite{Chartrand08,fora13}, but also direct proofs of 
the NSP are available for certain random matrices giving often better constants and working under weaker assumptions \cite{chgulepa12,dilera15,fora13,kara13,leme15}. 
In particular, Gaussian random matrices satisfy the NSP of order $K$ with high probability if $m \geq C K \log(K/N)$. Structured random matrices
including random partial Fourier and discrete cosine matrices, and partial random circulant matrices -- both important in applications -- satisfy the RIP and hence, the NSP with high probability provided that $m \geq C K \log^4(N)$ \cite{cata06,fora13,krmera14,ra10,ruve08}. Note that for these types of structured matrices, fast matrix vector multiplication routines are available.
}

\begin{definition}[Set of eigenvalues and singular values]
\label{def:eig}
We denote with $\Lambda(A)$ the \emph{set of eigenvalues} of a square matrix A. Respectively, $\lambda_{\min}(A)$ and $\lambda_{\max}(A)$ are the smallest and largest eigenvalues. We define by $\sigma_{\min}(A)$ and $\sigma_{\max}(A)$ the smallest and largest singular value \HR{of a rectangular matrix $A$}.
\end{definition}

\subsection{Conjugate gradient method (CG)}
The CG method was originally proposed by Stiefel and Hestenes in \cite{Stiefel1952} and generalized to complex systems in~\cite{jacobs1986}. For an Hermitian and positive \Rev{definite} matrix $A\in\C^{N\times N}$ 
the CG method solves the linear equation $A x = y$ or equivalently the minimization problem\Rev{
\[\argmin\limits_{x\in\C^N} \left(F(x)\defleft\frac{1}{2}x^*Ax - x^*y\right).\]}%
\Rev{The algorithm is designed to iteratively compute the minimizer $x^i$ of $F$ on the affine subspace $\tilde{V}_i\defleft x^0 + V_i$ with $V_i$ being the Krylov subspace $V_i \defleft\subspace\{r^0, Ar^0, \ldots, A^{i-1}r^0\}\subset\C^N$, $x^0\in\C^N$ a starting vector, and $r^0\defleft y-Ax^0$ (\emph{minimality property of CG}).}

\begin{algorithm}[H]
\caption{Conjugate Gradient (CG) method}
\label{CG}
Input: initial vector $x^{0} \in \C^{N}$, matrix $A \in \C^{N \times N}$, given vector $y \in \C^{N}$ and optionally a desired accuracy $\delta$.
\begin{algorithmic}[1]
\STATE Set $r^{0} = p^{0} = y-Ax^0$ and $i=0$
\WHILE{$r^{i}\neq0$ (or $\vectornorm{r^i}_{\ltwo} > \delta$)}
\STATE $a_{i} = \langle r^{i}, p^{i} \rangle_{\ell_2} / \langle A p^{i},p^i\rangle_{\ell_2}$
\STATE $x^{i+1} = x^{i} + a_{i}p^{i}$
\STATE $r^{i+1} = y - Ax^{i+1}$ 
\STATE $b_{i+1} = \langle Ap^{i}, r^{i+1}\rangle_{\ell_2} / \langle A p^{i},p^i\rangle_{\ell_2}$
\STATE $p^{i+1} = r^{i+1} - b_{i+1}p^{i}$
\STATE $i = i+1$
\ENDWHILE
\end{algorithmic}
\end{algorithm}
Roughly speaking, CG \HR{iteratively searches} for a minimum of the functional $F$ along conjugate directions $p^i$ with respect to $A$, i.e., $(p^i)^{*}Ap^j = 0$, $j < i$.
Thus, in step $i+1$ of CG the new iterate $x^{i+1}$ is found by minimizing $F(x^i + a_i p^i)$ with respect to the scalar $a_i\in \R$ along the 
search direction $p^i$. Since we perform a minimization in each iteration, \HR{this implies} monotonicity of the iterates, 
\HRomit $F(x^{i+1}) \leq F(x^i)$. \Rev{If the algorithm produces at some iteration a residual $r^{i}=0$, then a solution of the linear system is found. Otherwise it produces a new conjugate direction $p^{i}$. One can show that the conjugate directions $p^0,\ldots,p^{i-1}$ also span $V_i$. Since the conjugate directions are linear independent, we have $V_N=\C^N$ (assumed that $r^i\neq 0$, $i=0,\ldots,N-1$). Then, according to the above mentioned minimality property, the iterate $x^N$ is the minimizer of $F$ on $\C^N$, which means that CG terminates after at most $N$ iterations. Nevertheless,  the algorithm can be stopped after a significantly smaller number of steps as soon as the machine precision is very high and theoretically convergence already occurred. In view of propagation of errors in practice the algorithm may be run longer than just $N$ iterations though.}

The following theorem establishes the convergence and the convergence rate of CG.

\begin{theorem}[{\cite[Theorem 4.12]{sarisa00}}]
\label{thm:quarteroni}
Let the matrix $A$ be Hermitian and positive definite. The Algorithm~CG converges to the solution of the system $Ax=y$ after at most $N$ steps. Moreover, the error $x^i - x$ is such that 
\[\vectornorm{A^{\frac12}(x^i-x)}_{\ltwo}\leq \frac{2c_A^i}{1+c_A^{2i}}\vectornorm{A^{\frac12}(x^0-x)}_{\ltwo},\quad \text{with } \Rev{c_A  = \frac{\sqrt{\kappa_A}-1}{\sqrt{\kappa_A} +1}<1}, \]
where $\kappa_A = \frac{\sigma_{\max}(A)}{\sigma_{\min}(A)}$ is the condition number of the matrix $A$ and $\sigma_{\max}(A)$ (resp. $\sigma_{\min}(A)$) is the largest (resp. smallest) singular value of $A$.
\end{theorem}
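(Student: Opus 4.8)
The plan is to follow the classical two-step proof of CG convergence, using the minimality property recalled above as the starting point.

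\textbf{Finite termination.} First I would use the nestedness of the Krylov subspaces $V_1 \subseteq V_2 \subseteq \cdots$ together with the fact, already noted in the text, that the conjugate directions $p^0,\ldots,p^{i-1}$ form a basis of $V_i$. As long as no residual $r^i$ vanishes for $i<N$, these are $N$ linearly independent vectors, so $V_N = \C^N$; by the minimality property $x^N$ then minimizes $F$ over all of $\C^N$, i.e. $Ax^N = y$. If instead $r^i = 0$ for some $i < N$ the algorithm has already found the solution. Hence CG terminates in at most $N$ steps.

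\textbf{Reduction to a polynomial min-max problem.} Writing $x$ for the exact solution and $\|v\|_A \defleft \vectornorm{A^{1/2}v}_{\ltwo}$ for the energy norm (well defined since $A$ is Hermitian positive definite), a direct computation gives $F(z) = \tfrac12\|z-x\|_A^2 - \tfrac12\langle Ax,x\rangle_{\ltwo}$, so minimizing $F$ over $x^0 + V_i$ is the same as minimizing $\|z-x\|_A$ there. Every $z \in x^0 + V_i$ has the form $z = x^0 + q(A)r^0$ with $\deg q \le i-1$, and since $r^0 = y - Ax^0 = -A(x^0-x)$ we get $z - x = p(A)(x^0-x)$ with $p(t) = 1 - t\,q(t)$ ranging over all polynomials of degree $\le i$ with $p(0)=1$. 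Diagonalizing $A = U\diag(\lambda_1,\ldots,\lambda_N)U^*$ by the spectral theorem and expanding $x^0-x$ in the eigenbasis yields
\[
\|x^i - x\|_A \;\le\; \Bigg(\min_{\substack{\deg p \le i\\ p(0)=1}}\ \max_{\lambda \in [\lambda_{\min}(A),\lambda_{\max}(A)]} |p(\lambda)|\Bigg)\,\|x^0-x\|_A ,
\]
where I use that for Hermitian positive definite $A$ the eigenvalues coincide with the singular values, so the relevant interval is $[\sigma_{\min}(A),\sigma_{\max}(A)]$.

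\textbf{Chebyshev estimate and identification of the constant.} The inner min-max is the classical Chebyshev extremal problem on an interval not containing $0$: its value is $1/T_i(\mu)$, attained by a rescaled Chebyshev polynomial of the first kind, with $\mu = \frac{\lambda_{\max}+\lambda_{\min}}{\lambda_{\max}-\lambda_{\min}} = \frac{\kappa_A+1}{\kappa_A-1} > 1$. Using $T_i(\mu) = \tfrac12\big((\mu+\sqrt{\mu^2-1})^i + (\mu-\sqrt{\mu^2-1})^i\big)$ and the algebraic identities $\mu + \sqrt{\mu^2-1} = \frac{\sqrt{\kappa_A}+1}{\sqrt{\kappa_A}-1} = c_A^{-1}$ and $\mu - \sqrt{\mu^2-1} = c_A$, one gets $T_i(\mu) = \tfrac12(c_A^{-i}+c_A^{i})$, hence $1/T_i(\mu) = \frac{2c_A^i}{1+c_A^{2i}}$, and the stated bound follows after rewriting $\|\cdot\|_A = \vectornorm{A^{1/2}\cdot}_{\ltwo}$.

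\textbf{Main obstacle.} The conceptual core, and the step I expect to need the most care, is rigorously establishing the optimality characterization used in the reduction: that the CG iterate $x^i$ actually realizes the minimum of $\|z-x\|_A$ over $x^0 + V_i$. This relies on the conjugacy relations $\langle p^i, Ap^j\rangle_{\ltwo} = 0$ and residual orthogonality $\langle r^i, r^j\rangle_{\ltwo} = 0$ for $j<i$ produced by Algorithm~CG, which have to be verified by induction on the recursion (together with $\subspace\{p^0,\ldots,p^{i-1}\} = V_i$). The spectral-decomposition step and the Chebyshev computation are then routine.
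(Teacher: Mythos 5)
Your proposal is correct and is precisely the classical Chebyshev-polynomial argument (finite termination via the Krylov basis, reduction to the min--max problem for polynomials with $p(0)=1$, and $1/T_i(\mu)=\frac{2c_A^i}{1+c_A^{2i}}$); the paper itself gives no proof but cites exactly this textbook argument from \cite{sarisa00}, adding only the remark that in the complex Hermitian case one replaces transposes by conjugate transposes (so in your reduction the functional should be written with $\mathrm{Re}\langle z,y\rangle$, which changes nothing in the estimate).
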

\begin{remark}
Theorem~\ref{thm:quarteroni} is slightly modified with respect to the formulation in~\cite{sarisa00}. There, the matrix $A$ is considered to be symmetric instead of being Hermitian. However, in the complex case, the proof can be performed similarly by replacing the transpose by the conjugate transpose.
\end{remark}
\Rev{\begin{remark}
\label{rem:coeffless1}
Since $\kappa_A \geq 1$, it follows that $0 \leq c_A  < 1$, and also $0 \leq c_A^i  < 1$, for positive iteration numbers $i$. From
$ 0 < (1- c_A^i)^2 = 1 + c_A^{2i} - 2c_A^i$, we immediately see that $2c_A^i/(1+c_A^{2i}) < 1$ for all $i \in \N$, and obviously $2c_A^i/(1+c_A^{2i}) \to 0$ for $i \to +\infty$.
\end{remark} }

\subsection{Modified conjugate gradient method (MCG)}
\label{sec:MCG}
In Section~\ref{sec:IRLSltau}, we are interested in a vector which solves the weighted least-squares problem
\[\hat{x}=\argmin\limits_{x\in\sF{\Phi}}\|x\|_{\ell_2(w)},\]
given $\Phi\in\C^{m\times N}$ with $m\leq N$.
As we show 
below in Section~\ref{sec: original IRLS}, the minimizer $\hat{x}$ is given explicitly by the (weighted) Moore-Penrose pseudo-inverse
\[\hat{x} = D\Phi^{*}(\Phi D \Phi^{*})^{-1}y,\]
where $D \defleft \diag{[w_i^{-1}]}_{i=1}^N$. \HR{Hence, in order to determine $\hat x$, we first solve the system} 
\begin{equation}\label{eq:firstsys}
\Phi D \Phi^{*} \theta = y,
\end{equation} 
and then we compute  $\hat x = D \Phi^{*} \theta$. Notice that the system \eqref{eq:firstsys} has the general form
\begin{equation}
\label{eq:modsystem}
TT^*\theta = y,
\end{equation}
with $T\defleft\Phi D^{\frac{1}{2}}$. We consider the application of CG to this system for the matrix $A=TT^*$. This approach leads to the modified conjugate gradient (MCG) method, presented in Algorithm~\ref{MCG} and proposed by J.T.~King in \cite{king89}. It provides a sequence $(\theta^i)_{i\in\N}$ with $\theta^i \in U_i \defleft \subspace \{y, TT^{*}y, \ldots, (TT^{*})^{i-1}y\}$, the Krylov subspace associated to \eqref{eq:modsystem}, with the property that $\bar{x}^i\defleft T^*\theta^i$ 
minimizes $\vectornorm{\bar{x}^i - \bar{x}}_{\ell_2}$, where $\bar{x} = \argmin\limits_{x\in\sF{T}}\vectornorm{x}_{\ell_2}$. \HR{Finally}, we compute $\hat{x} = D^{\frac12}\bar{x}$. 

\begin{algorithm}[H]
\caption{Modified conjugate gradient (MCG) method}
\label{MCG}
Input: initial vector $\theta^{0} \in \C^{m}$, \HR{$T \in \C^{m \times N}$, $y \in \C^{m}$, desired accuracy $\delta$ (optional)}.

\begin{algorithmic}[1]
\STATE Set $\rho^{0} = p^{0} = y$ and $i=0$
\WHILE{$\rho^{i}\neq0$ (or $\vectornorm{\rho^i}_{\ltwo} > \delta$)}
\STATE $\alpha_{i} = \langle \rho^{i}, p^{i} \rangle_{\ell_2} / \| T^{*} p^{i}\|_{\ell_2}^{2}$
\STATE $\theta^{i+1} = \theta^{i} + \alpha_{i}p^{i}$
\STATE $\rho^{i+1} = y - TT^{*}\theta^{i+1}$ \label{def: rho}
\STATE $\beta_{i+1} = \langle T^{*}p^{i}, T^{*}\rho^{i+1}\rangle_{\ell_2} / \| T^{*} p^{i}\|_{\ell_2}^{2}$
\STATE $p^{i+1} = \rho^{i+1} - \beta_{i+1}p^{i}$
\STATE $i = i+1$
\ENDWHILE
\STATE Set $\bar{x}^{i+1} = T^{*} Ñ\theta^{i+1}$
\end{algorithmic}
\end{algorithm}

The following theorem provides a precise rate of convergence of MCG. Additionally, we emphasize the monotonic decrease of the error $\vectornorm{\hat{x}^i-\hat{x}}_{\ltwo(w)}$, which we use below in Lemma \ref{lemma:ineqfunctional}.
\begin{theorem}
Suppose the matrix $T$ to be surjective. Then the sequence $(\bar{x}^i)_{i\in\N}$ generated by the Algorithm~MCG converges to $\bar{x} = T^*(TT^*)^{-1}y$ in at most $N$ steps, and
\begin{equation}\label{eq:estimmcg} \vectornorm{\bar{x}^i-\bar{x}}_{\ltwo}\leq \frac{2c_{TT^*}^i}{1+c_{TT^*}^{2i}}\vectornorm{\bar{x}^0-\bar{x}}_{\ltwo} \text{, with }\Rev{c_{TT^*}<1},
\end{equation}
for all $i\geq 0$, where $c_{TT^*} \HR{= \frac{\sqrt{\kappa(T T^*)} - 1}{\sqrt{\kappa(TT^*)}+1} = \frac{\sigma_{\max}(T)-\sigma_{\min}(T)}{\sigma_{\max}(T)+\sigma_{\min}(T)}}$ 
is defined as in Theorem~\ref{thm:quarteroni}, and $\bar{x}^0 = T^*\theta^0$ is the initial vector. Moreover, by setting $D \defleft \diag{[w_i^{-1}]}_{i=1}^N$, and $\hat{x}^i=D^{\frac12}\bar{x}^i$ as well as $\hat{x}=D^{\frac12}\bar{x}$, we obtain
\begin{equation}
\label{eq:estimateMCGcond}
\vectornorm{\hat{x}^i-\hat{x}}_{\ltwo(w)}\leq \frac{2c_{TT^*}^i}{1+c_{TT^*}^{2i}}\vectornorm{\hat{x}^0-\hat{x}}_{\ltwo(w)}.
\end{equation}
\end{theorem}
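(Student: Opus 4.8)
The plan is to reduce everything to the classical CG convergence result, Theorem~\ref{thm:quarteroni}, applied to the Hermitian positive definite matrix $A = TT^*$ (positive definite precisely because $T = \Phi D^{1/2}$ is surjective, $D$ having strictly positive diagonal). First I would record the algebraic identity that $\bar{x} = T^*(TT^*)^{-1}y$ is indeed the minimal-$\ell_2$-norm element of $\sF{T}$ (the normal-equations/Moore--Penrose argument already sketched in Section~\ref{sec: original IRLS}), so the target vector in \eqref{eq:estimmcg} is well-defined and equals $T^*\theta$ with $\theta = (TT^*)^{-1}y$ the true solution of \eqref{eq:modsystem}. The termination-in-at-most-$N$-steps claim then follows because MCG is CG for $A=TT^*$ in disguise: the iterates $\theta^i$ lie in the Krylov space $U_i$, and CG terminates after at most $m \leq N$ steps, hence so does the sequence $\bar{x}^i = T^*\theta^i$.

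The core estimate \eqref{eq:estimmcg} is obtained by translating the $A^{1/2}$-norm bound of Theorem~\ref{thm:quarteroni} into an $\ell_2$-norm bound on $\bar{x}^i - \bar{x}$. The key observation is the isometry
\[
\vectornorm{\bar{x}^i - \bar{x}}_{\ltwo} = \vectornorm{T^*(\theta^i - \theta)}_{\ltwo} = \vectornorm{(TT^*)^{1/2}(\theta^i-\theta)}_{\ltwo} = \vectornorm{A^{1/2}(\theta^i - \theta)}_{\ltwo},
\]
where the middle equality uses $\vectornorm{T^*v}_{\ltwo}^2 = \langle TT^* v, v\rangle_{\ltwo} = \vectornorm{(TT^*)^{1/2}v}_{\ltwo}^2$. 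Applying Theorem~\ref{thm:quarteroni} to $A = TT^*$ with starting vector $\theta^0$ (so that $x^0$ there is $\theta^0$ and $A^{1/2}(x^0-x)$ corresponds to $\bar{x}^0 - \bar{x}$) gives exactly
\[
\vectornorm{\bar{x}^i - \bar{x}}_{\ltwo} \leq \frac{2 c_{TT^*}^i}{1 + c_{TT^*}^{2i}} \vectornorm{\bar{x}^0 - \bar{x}}_{\ltwo},
\]
with $c_{TT^*} = (\sqrt{\kappa(TT^*)}-1)/(\sqrt{\kappa(TT^*)}+1) < 1$ by Remark~\ref{rem:coeffless1}; the alternative expression $c_{TT^*} = (\sigma_{\max}(T)-\sigma_{\min}(T))/(\sigma_{\max}(T)+\sigma_{\min}(T))$ follows from $\sigma_{j}(TT^*) = \sigma_j(T)^2$ and hence $\kappa(TT^*) = (\sigma_{\max}(T)/\sigma_{\min}(T))^2$. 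Monotonicity of $\vectornorm{\bar{x}^i - \bar{x}}_{\ltwo}$ in $i$ is inherited from the monotonic decrease of the $A^{1/2}$-error in CG, which in turn is the minimality property of CG over the nested Krylov spaces.

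Finally, \eqref{eq:estimateMCGcond} is a trivial change of variables: since $\hat{x}^i = D^{1/2}\bar{x}^i$ and $\hat{x} = D^{1/2}\bar{x}$, and since by definition of the weighted norm $\vectornorm{D^{1/2}v}_{\ltwo(w)}^2 = \sum_i w_i |w_i^{-1/2} v_i|^2 = \sum_i |v_i|^2 = \vectornorm{v}_{\ltwo}^2$, the map $D^{1/2}\colon \ell_2^N \to \ell_2^N(w)$ is an isometry; applying it to both sides of \eqref{eq:estimmcg} yields \eqref{eq:estimateMCGcond} verbatim with the same constant. I do not anticipate a genuine obstacle here — the whole statement is bookkeeping that relays the known CG bound through two isometries ($T^*$ intertwining the $A^{1/2}$-norm with the $\ell_2$-norm, and $D^{1/2}$ intertwining the $\ell_2$-norm with the weighted $\ell_2(w)$-norm). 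The one point requiring a little care is verifying that MCG's iterates genuinely coincide with (or at least share the Krylov-minimality property of) the CG iterates for $A = TT^*$, i.e.\ that $\theta^i$ minimizes the CG functional over $\theta^0 + U_i$; this is King's observation from \cite{king89} and can be checked by matching the recursions for $\alpha_i$, $\beta_{i+1}$, $p^i$ in Algorithm~\ref{MCG} against those in Algorithm~\ref{CG} with $A$ replaced by $TT^*$ and all inner products $\langle A\,\cdot\,,\,\cdot\,\rangle$ rewritten as $\langle T^*\,\cdot\,, T^*\,\cdot\,\rangle$.
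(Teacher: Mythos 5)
Your proposal is correct and follows essentially the same route as the paper's proof: apply Theorem~\ref{thm:quarteroni} to $A = TT^*$, use the identity $\vectornorm{T^*(\theta^i-\theta)}_{\ltwo} = \vectornorm{(TT^*)^{1/2}(\theta^i-\theta)}_{\ltwo}$ to translate the bound into \eqref{eq:estimmcg}, and then obtain \eqref{eq:estimateMCGcond} from the fact that $D^{1/2}$ is an isometry between $\ell_2$ and $\ell_2(w)$. Your additional remarks (minimal-norm characterization of $\bar{x}$, the singular-value form of $c_{TT^*}$, and the matching of MCG with CG for $TT^*$ per King) are consistent with, and only slightly more detailed than, the paper's argument.
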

\begin{proof}
By Theorem~\ref{thm:quarteroni}, we have 
\[ \vectornorm{(TT^*)^{\frac12}(\theta^i-\theta)}_{\ltwo}\leq \frac{2c_{TT^*}^i}{1+c_{TT^*}^{2i}}\vectornorm{(TT^*)^{\frac12}(\theta^0-\theta)}_{\ltwo},\]
for $\theta$ as given in~\eqref{eq:modsystem}. By the identity
\begin{align*}
\vectornorm{(TT^*)^{\frac12}(\theta^i-\theta)}_{\ltwo}^2 &= \langle (TT^*)^{\frac12}(\theta^i-\theta), (TT^*)^{\frac12}(\theta^i-\theta)\rangle_{\ltwo}
= \langle (TT^*)(\theta^i-\theta), \theta^i-\theta\rangle_{\ltwo}\\
&= \langle T^*(\theta^i-\theta), T^*(\theta^i-\theta)\rangle_{\ltwo}
= \langle \bar{x}^i-\bar{x}, \bar{x}^i-\bar{x}\rangle_{\ltwo}
= \vectornorm{\bar{x}^i-\bar{x}}_{\ltwo}^2,
\end{align*}
we obtain the assertion~\eqref{eq:estimmcg}. \HR{Inequality \eqref{eq:estimateMCGcond} follows then from the definition of the diagonal matrix $D$ and the weighted norm $\ell_2(w)$.} \Rev{The fact that the coefficient  $2c_{TT^*}^i/(1+c_{TT^*}^{2i})<1$ for all $i \in \N$, and $2c_{TT^*}^i/(1+c_{TT^*}^{2i}) \to 0$  for $i \to \infty$ follows as in Remark~\ref{rem:coeffless1}.}
\end{proof}

\section{Conjugate gradient acceleration of the IRLS method for $\ltau$-minimization}
\label{sec:IRLSltau}
In this section, we start with a detailed introduction of the IRLS algorithm and its modified version 
\HR{that uses CG} for the solution of the successive quadratic optimization \HR{problems}. Afterwards, we present two results providing \HRomit 
the convergence and the rate of convergence of the modified algorithm.
\HR{As crucial feature, we give bounds on the accuracies of the (inexact) CG solutions of the intermediate least squares problems which ensure convergence of the overall IRLS methods. In particular,
these tolerances must depend on the current iteration and should tend to zero with increasing iteration count. In fact, without this condition, one may observe
divergence of the method.}
The proofs of the theorems are developed into several lemmas.

From now on, we consider a fixed parameter $\tau$ such that $0 < \tau \leq 1$. At some points of the presentation, we  explicitly switch to the case of $\tau = 1$ to prove additional properties of the algorithm which 
\HR{are due} to the convexity of the $\lone$-norm minimization problem.

\subsection{Iteratively Re-weighted Least Squares (IRLS) algorithm for $\ell_{\tau}$-minimization} \label{sec: original IRLS}


\HR{The following functional turns out to be a crucial tool for the analysis of the IRLS algorithm and its modified variant.}

\begin{definition}\label{def: sJ}
Given a real number $\etil > 0$, $x \in \C^N$, and a weight vector $w\in\mathbb{R}^{N}$ with positive entries $w_j>0$, $j=1,\ldots,N$, we define
\begin{equation} \label{eq: sJ}
\sJ \left( x,w,\etil \right) \defleft \frac{\tau}{2}\left[ \sum\limits_{j=1}^{N}{|x_{j}|^{2}w_{j}}  + \sum\limits_{j=1}^{N}{\left( \etil^{2}w_{j} + \frac{2-\tau}{\tau}w_{j}^{-\frac{\tau}{2-\tau}}\right)} \right].
\end{equation}
\end{definition}

\Rev{The standard IRLS algorithm for $\ell_{\tau}$-minimiziation is intuitively motivated in~\cite{dadefogu10} by means of a weighted least squares approximation of the $\ell_{\tau}$-minimization problem. However, for the sake of a concise presentation, we introduce below the algorithm directly as an alternating minimization procedure of the functional $\mathcal J_{\tau}$ with respect to the three variables $x$, $w$, and $\varepsilon$.} \Rev{ Algorithm~\ref{old_algorithm}, recalls the formulation of IRLS, as appearing in \cite[Section 7.2]{dadefogu10}, or \cite[Chapter 15.3]{fora13}. For the sake of notational clarity,  we use the nonincreasing rearrangement $r(\cdot)$, as introduced in Definition~\ref{def:nir},  at step 3. }

\begin{algorithm}[H]
\caption{Iteratively Re-weighted Least Squares (IRLS)}
\label{old_algorithm}
Set $w^{0} \defleft (1,\ldots,1),$ $\etil^{0} \defleft 1$
\begin{algorithmic}[1]
\WHILE{$\etil^{n}\neq0$}
\STATE $x^{n+1} \defleft \argmin\limits_{x\in\sF{\Phi}}\sJ(x,w^{n},\etil^{n}) = \argmin\limits_{x\in\sF{\Phi}}\left\|x\right\|_{\ell_{2}(w^{n})}$ \label{def: x_opt}
\STATE $\etil^{n+1} \defleft \min(\etil^{n}, \frac{r(x^{n+1})_{K+1}}{N})$ \label{def: epsilon}
\STATE $w^{n+1} \defleft \argmin\limits_{w>0} \sJ(x^{n+1}, w, \etil^{n+1})$, i.e., $w_j^{n+1} = [|x_j^{n+1}|^2 + (\etil^{n+1})^2]^{-\frac{2-\tau}{2}}, \hspace{0.5cm} j = 1,\ldots,N$\label{def: w_opt}
\ENDWHILE
\end{algorithmic}
\end{algorithm}


\Rev{The convergence of IRLS is by now well-established and we refer to  \cite{dadefogu10}  and \cite[Section 15.3]{fora13} for details, which we in part extend in our analysis in  Section~\ref{sec:irlsconv}.}

\HRomit

In this section we propose a practical method to solve approximatively the least squares problems appearing in \Rev{step~\ref{def: x_opt}} of Algorithm~\ref{old_algorithm}. The following characterization of their solution turns out to be very useful. \HR{Note that the $\ell_{2}(w)$-norm is strictly convex, therefore its minimizer subject to an affine constraint
is unique.}

\begin{lemma}[{\cite[{(2.6)}]{dadefogu10}}, {\HR{\cite[Proposition A.23]{fora13}}}]
\label{lem: orthogonality}
	We have
	$\hat{x} = \argmin\limits_{x\in\sF{\Phi}}\|x\|_{\ell_{2}(w)}$ if and only if \HR{$\hat{x} \in \sF{\Phi}$} and
	\begin{equation}\label{eq: orthogonality}
		\langle \hat{x}, \eta \rangle_{w} = 0 \hspace{1cm}\text{ for all }\hspace{0.5cm} \eta \in \mathcal{N}_{\Phi}.
	\end{equation}
\end{lemma}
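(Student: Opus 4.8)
The plan is to prove the equivalence by the standard first-order optimality argument for minimization of a strictly convex differentiable function over an affine subspace, adapted to the weighted inner product $\langle \cdot,\cdot\rangle_w = \langle\cdot,\cdot\rangle_{\ell_2(w)}$. The objective $f(x) = \|x\|_{\ell_2(w)}^2 = \langle x, x\rangle_w$ is smooth and strictly convex, and the constraint set $\mathcal{F}_\Phi(y) = \hat{x}_0 + \mathcal{N}_\Phi$ is a translate of the kernel; hence a feasible point is the (unique) minimizer if and only if the gradient of $f$ is orthogonal (in the relevant sense) to the feasible directions, which form exactly $\mathcal{N}_\Phi$.

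\medskip

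\textbf{Proof.}
Recall that $\mathcal{F}_\Phi(y)$ is a nonempty affine subspace: if $x' , x'' \in \mathcal{F}_\Phi(y)$ then $x' - x'' \in \mathcal{N}_\Phi$, and conversely $x' + \eta \in \mathcal{F}_\Phi(y)$ for every $x' \in \mathcal{F}_\Phi(y)$ and $\eta \in \mathcal{N}_\Phi$.

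Suppose first that $\hat{x} \in \mathcal{F}_\Phi(y)$ satisfies $\langle \hat{x}, \eta\rangle_w = 0$ for all $\eta \in \mathcal{N}_\Phi$. Let $x \in \mathcal{F}_\Phi(y)$ be arbitrary and write $\eta \defleft x - \hat{x} \in \mathcal{N}_\Phi$. Then, expanding in the weighted inner product,
\[
\|x\|_{\ell_2(w)}^2 = \|\hat{x} + \eta\|_{\ell_2(w)}^2 = \|\hat{x}\|_{\ell_2(w)}^2 + 2\,\mathrm{Re}\,\langle \hat{x}, \eta\rangle_w + \|\eta\|_{\ell_2(w)}^2 = \|\hat{x}\|_{\ell_2(w)}^2 + \|\eta\|_{\ell_2(w)}^2 \geq \|\hat{x}\|_{\ell_2(w)}^2,
\]
with equality if and only if $\eta = 0$, i.e.\ $x = \hat{x}$. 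Hence $\hat{x}$ is the unique minimizer of $\|\cdot\|_{\ell_2(w)}$ over $\mathcal{F}_\Phi(y)$.

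Conversely, suppose $\hat{x} = \argmin_{x \in \mathcal{F}_\Phi(y)} \|x\|_{\ell_2(w)}$, so in particular $\hat{x} \in \mathcal{F}_\Phi(y)$. Fix $\eta \in \mathcal{N}_\Phi$ and a scalar $t$; since $\hat{x} + t\eta \in \mathcal{F}_\Phi(y)$, minimality gives
\[
\|\hat{x}\|_{\ell_2(w)}^2 \leq \|\hat{x} + t\eta\|_{\ell_2(w)}^2 = \|\hat{x}\|_{\ell_2(w)}^2 + 2\,\mathrm{Re}\big(t^*\langle \eta, \hat{x}\rangle_w\big) + |t|^2\|\eta\|_{\ell_2(w)}^2,
\]
hence $2\,\mathrm{Re}\big(t^*\langle \eta, \hat{x}\rangle_w\big) + |t|^2\|\eta\|_{\ell_2(w)}^2 \geq 0$ for all $t \in \C$. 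Choosing $t = s\,\langle \eta, \hat{x}\rangle_w$ with $s > 0$ real yields $2 s |\langle \eta, \hat{x}\rangle_w|^2 + \mathcal{O}(s^2) \geq 0$; dividing by $s$ and letting $s \to 0^+$ from above, and then repeating with $s < 0$ (equivalently, replacing $\eta$ by $-\eta$ is already covered, but one also checks the sign directly), forces $\langle \eta, \hat{x}\rangle_w = 0$. Since $\eta \in \mathcal{N}_\Phi$ was arbitrary and $\langle \hat{x}, \eta\rangle_w = \langle \eta, \hat{x}\rangle_w^*$, we conclude $\langle \hat{x}, \eta\rangle_w = 0$ for all $\eta \in \mathcal{N}_\Phi$. $\hfill\square$

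\medskip

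The only mild subtlety — the \emph{main obstacle}, such as it is — is the bookkeeping with complex scalars: one must check that the real part appearing in the variational inequality truly forces the full complex inner product to vanish. This is handled by testing with both a real multiple and an imaginary multiple of the perturbation $\eta$ (or equivalently with $t = s\langle\eta,\hat{x}\rangle_w$ as above), which pins down both the real and imaginary parts of $\langle\eta,\hat{x}\rangle_w$. Everything else is the routine completion-of-squares computation, and strict convexity of $\|\cdot\|_{\ell_2(w)}$ (noted just before the lemma) gives uniqueness for free in the first direction.
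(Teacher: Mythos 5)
Your proof is correct, and it is essentially the same argument as the one in the references the paper cites for this lemma ([dadefogu10, (2.6)], [fora13, Prop.\ A.23]) — the paper itself gives no proof — namely the standard Pythagoras/first-order variational characterization of the $\ell_2(w)$-minimizer over the affine set $\hat{x}+\mathcal{N}_\Phi$. Only a cosmetic remark: with the paper's convention $\langle a,b\rangle_w=\sum_i a_i b_i^* w_i$ the cross term is $2\,\mathrm{Re}\bigl(t\langle\eta,\hat{x}\rangle_w\bigr)$ rather than $2\,\mathrm{Re}\bigl(t^*\langle\eta,\hat{x}\rangle_w\bigr)$, but since $t$ ranges over all of $\C$ (and your test value $t=s\langle\eta,\hat{x}\rangle_w$ with $s<0$ is adapted accordingly) this conjugation slip does not affect the validity of the argument.
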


\HRomit
By means of Lemma~\ref{lem: orthogonality}, we are able to derive an explicit representation of the weighted $\ltwo$-minimizer $\hat{x}:=\argmin\limits_{x\in\sF{\Phi}}\vectornorm{x}_{\ell_2(w)}$. \Rev{Define $D \defleft \diag\left[(w_j)^{-1}\right]_{j=1}^{N}$.} From~\eqref{eq: orthogonality}, we have the equivalent formulation
\[D^{-1}\hat{x}\in\mathcal{R} ( \Phi^*),\]
where $\mathcal{R}(\cdot)$ denotes the range of a linear map. Therefore, there is a $\xi\in\R^m$ such that $\hat{x}=D\Phi^*\xi$. To compute $\xi$, we observe that
\[y = \Phi \hat{x} = (\Phi D \Phi^*)\xi,\]
and thus, since $\Phi$ has full rank and $\Phi D \Phi^*$ is invertible, we conclude
\[\hat{x} = D\Phi^*\xi = D\Phi^*(\Phi D \Phi^*)^{-1}y.\]

As a consequence, we see that at step \ref{def: x_opt} of Algorithm~IRLS the minimizer of the least squares problem is explicitly given by the equation
\begin{equation}\label{eq: def_least_squares_problem}
	x^{n+1} = D_n\Phi^{*}(\Phi D_n \Phi^{*})^{-1}y,
\end{equation}
where we introduced the $N \times N$ diagonal matrix $$D_n \defleft \diag\left[(w_j^n)^{-1}\right]_{j=1}^{N}.$$

Furthermore, the new weight vector in step~\ref{def: w_opt} of Algorithm~IRLS is explicitly given by
\begin{equation}\label{eq: def_new_weights}
	w_j^{n+1} = [|x_j^{n+1}|^2 + (\etil^{n+1})^2]^{-\frac{2-\tau}{2}}, \hspace{1cm} j = 1,\ldots,N.
\end{equation} 
Taking into consideration that $w_j>0$, this \HR{formula} can be derived \HR{from} the first order optimality condition $\HR{\partial} \sJ(x^{n+1}, w, \etil^{n+1}) / \HR{\partial} w = 0$. \\

\subsection{The algorithm CG-IRLS}
\label{sec:IRLSCGalg}

Instead of solving {\it exactly} the system of linear equations \HR{\eqref{eq: def_least_squares_problem} 
occurring in step \ref{def: x_opt}} of algorithm~IRLS, we substitute the exact solution by \HR{the} approximate solution provided by the iterative algorithm MCG described in Section~\ref{sec:MCG}. We shall set a tolerance $\tol_{n+1}$, which gives us an upper threshold for the error between the optimal and the approximate solution in the weighted $\ell_2$-norm. In this section, we give a precise and implementable condition on the sequence $(\tol_n)_{n\in\N}$ of the tolerances \HR{that guarantees convergence} of \HR{the modified IRLS presented as Algorithm~\ref{cg-IRLS} below}. 

\begin{algorithm}[h!]
\caption{Iteratively Re-weighted Least Squares combined with CG (CG-IRLS)}
\label{cg-IRLS}
Set $\wtil^{0} \defleft (1,\ldots,1)$, $\etil^{0} \defleft 1$, $\beta \in (0,1]$
\begin{algorithmic}[1]
\WHILE{$\etil^{n}\neq0$}
\STATE Compute $\xtil^{n+1}$ by means of MCG s.t. $\|\hat{x}^{n+1} - \xtil^{n+1}\|_{\ell_{2}(\wtil^{n})}^{2} \le \textnormal{tol}_{n+1}$,  where $\hat{x}^{n+1} \defleft \argmin\limits_{x\in\sF{\Phi}}\sJ(x,w^{n},\etil^{n}) = \argmin\limits_{z\in\sF{\Phi}}\left\|z\right\|_{\ell_{2}(\wtil^{n})}$\label{def: xtil}. \HR{Use the last iterate $\theta^{n,i}$ corresponding to
$\tilde{x}^{n} = T^* \theta^{n,i}$ from MCG of the previous IRLS iteration as initial vector $\theta^0 = \theta^{n+1,0}$ for the present run of MCG.}
\STATE $\etil^{n+1} \defleft \min(\etil^{n}, \beta r(\xtil^{n+1})_{K+1})$ \label{def: etil}
\STATE $\wtil^{n+1} \defleft \argmin\limits_{w>0} \sJ(\xtil^{n+1}, w, \etil^{n+1})$, i.e., $w_j^{n+1} = [|\xtil_j^{n+1}|^2 + (\etil^{n+1})^2]^{-\frac{2-\tau}{2}}, \hspace{0.5cm} j = 1,\ldots,N$ \label{def: wtil}
\ENDWHILE
\end{algorithmic}
\end{algorithm}

In contrast to Algorithm~IRLS, the value $\beta$ in step \ref{def: etil} is introduced to obtain flexibility in tuning the performance of the algorithm. While we prove in Theorem~\ref{thm: convergence} convergence for any positive value of $\beta$, \HR{Theorem~\ref{thm: convergence}(iii) below guarantees instance optimality only} for $\beta<\left(\frac{1-\gamma}{1+\gamma}\frac{K+1-k}{N}\right)^{\frac{1}{\tau}}$ in the case that $\lim\limits_{n\rightarrow\infty}\etil^n\neq0$. 
\HR{Nevertheless in practice, choices of $\beta$ which do not necessarily fulfill this condition may work very well. Section~\ref{sec:numerics}, 
investigates good choices of $\beta$ numerically.} 

From now on, we fix the notation $\hat{x}^{n+1}$ for the exact solution in step \ref{def: xtil} of Algorithm~\ref{cg-IRLS}, and $\xtil^{n+1,i}$ for its approximate solution in the $i$-th iteration of Algorithm MCG. We have to make sure that $\|\hat{x}^{n+1} - \xtil^{n+1,i}\|_{\ell_{2}(\wtil^{n})}^{2}$ is sufficiently small to fall below the given tolerance. To this end, we could use the bound on the error provided by~\eqref{eq:estimateMCGcond}, but \HR{this} has the following two unpractical drawbacks:
\begin{enumerate}
\item The vector $\hat{x} = \hat{x}^{n+1}$ is not known a priori;
\item The computation of the condition number $c_{TT^*}$ is possible, but it requires the computation of 
eigenvalues with additional computational cost which we prefer to avoid.
\end{enumerate}

Hence, we propose an alternative estimate of the error in order to guarantee $\|\hat{x}^{n+1} - \xtil^{n+1}\|_{\ell_{2}(\wtil^{n})}^{2} \le \textnormal{tol}_{n+1}$. 
We use the notation of Algorithm MCG, but add an additional upper index for the outer IRLS iteration, e.g., $\theta^{n+1,i}$ is the $\theta^i$ in the $n+1$-th IRLS iteration. After $i$ steps of MCG, we have 
\begin{equation*}
	\|\hat{x}^{n+1} - \xtil^{n+1,i}\|_{\ell_{2}(\wtil^{n})}^{2} = \|D_{n}\Phi^{*}(\Phi D_{n} \Phi^{*})^{-1}y - D_{n}\Phi^{*}\theta^{n+1,i}\|_{\ell_{2}(\wtil^{n})}^{2}.
\end{equation*}
We use $\theta^{n+1,i} = (\Phi D_n \Phi^{*})^{-1}(y-\rho^{n+1,i})$ from step \ref{def: rho} of MCG to obtain
\begin{align*}
	\|\hat{x}^{n+1} &- \xtil^{n+1,i}\|_{\ell_{2}(\wtil^{n})}^{2} = \|D_{n}^{\frac12}\Phi^{*}(\Phi D_{n} \Phi^{*})^{-1}\rho^{n+1,i}\|_{\ell_{2}}^{2} \leq \| D_{n}\|\|\Phi\|^{2} \|(\Phi D_{n} \Phi^{*})^{-1}\|^{2} \|\rho^{n+1,i}\|_{\ell_{2}}^{2} \\
	&= \frac{\max\limits_{\HR{1 \leq \ell \leq N}}\left(|\tilde{x}_{\HR{\ell}} |^2+\left(\etil^n\right)^2\right)^{\frac{2-\tau}{2}}\|\Phi\|^{2}}{\lambda_{\min}\left( \Phi D_{n} \Phi^{*} \right)} \|\rho^{n+1,i}\|_{\ell_{2}}^{2} 
	\le \left(1+\max\limits_{1\leq \HR{\ell} \leq N}\left(\frac{|\tilde{x}_\HR{\ell} ^n|}{\etil^n}\right)^2\right)^{\frac{2-\tau}{2}}\frac{\|\Phi\|^{2}}{\sigma_{\min}\left( \Phi  \right)} \|\rho^{n+1,i}\|_{\ell_{2}}^{2}.
\end{align*}

The last inequality above 
\HR{results} from $\lambda_{\min}\left( \Phi D_{n} \Phi^{*} \right) = \sigma_{\min}^{2}\left( \Phi D_{n}^{\frac12}  \right)$ and 
\begin{equation*}
\sigma_{\min}\left(\Phi D_{n}^{\frac12} \right) 
\HR{\geq} \sigma_{\min}\left( \Phi \right)\sigma_{\min}\left( D_{n}^{\frac12}  \right)\geq (\etil^n)^{2-\tau}\sigma_{\min}\left( \Phi \right).
\end{equation*}

Since $\etil^n$ and $\tilde{x}^n$ are known from the previous iteration, and $\|\rho^{n+1,i}\|_{\ell_2}$ is explicitly calculated within the MCG algorithm, $\|\hat{x}^{n+1} - \xtil^{n+1,i}\|_{\ell_{2}(\wtil^{n})}^{2} \le \textnormal{tol}_{n+1}$ can be achieved by iterating until 
\begin{equation}
\label{eq:stopcritmcg}
\|\rho^{n+1,i}\|^2_{\ell_2} \le \frac{\sigma_{\min}\left( \Phi  \right)}{\left(1+\max\limits_{1\leq \HR{\ell} \leq N}\left(\frac{|\tilde{x}_{\HR{\ell}}^n|}{\etil^n}\right)^2\right)^{\frac{2-\tau}{2}}\|\Phi\|^{2}} \textnormal{tol}_{n+1}.
\end{equation}
Consequently, we shall use the minimal $i\in\N$ such that the above inequality is valid and set $\xtil^{n+1}\defleft\xtil^{n+1,i}$, which will be the standard notation for the approximate solution.\\

In inequality~\eqref{eq:stopcritmcg}, the computation of $\sigma_{\min}\left( \Phi  \right)$ and $\|\Phi\|$ is necessary. The computation of these constants might be demanding, but has to be performed only once before the algorithm starts. Furthermore, in practice it is sufficient to compute approximations of these values \HR{and therefore} these operations are not critical for the computation time of the algorithm.


\subsection{Convergence results}
\label{sec:irlsconv}
After introducing Algorithm~CG-IRLS, we state below the two main results of this section. Theorem~\ref{thm: convergence} shows the convergence of the algorithm to a limit point \HR{that obeys certain} error guarantees with respect to the solution
of  \eqref{eq:ltauproblem}. Below $K$ denotes the index used in the $\etil$-update rule, i.e., step \ref{def: etil}) of Algorithm~CG-IRLS.

\begin{theorem} \label{thm: convergence}
\HR{Let $0 < \tau \leq 1$. Assume $K$ 
is such} that $\Phi$ satisfies the \textit{Null Space Property} \eqref{eq: NSP} of order $K$, with $\gamma<1$. If $\tol_{n+1}$ in Algorithm~CG-IRLS is chosen such that
\begin{equation}
\label{eq:deftol}
\sqrt{\tol_{n+1}} \le \sqrt{\left(\frac{c_n}{2}\right)^2+\frac{2a_{n+1}}{\tau\bar{W}^2_{n+1}}}-\frac{c_n}{2},
\end{equation}
where
\begin{align}
\label{eq:defcnwn}
c_n &\defleft 2 W_n\left(\vectornorm{\tilde{x}^{n}}_{\ltwo(w^{n-1})} + \sqrt{\tol_n}\right), \text{ with   } \\
\bar{W}_{n} &\defleft  \sqrt{\frac{\max\limits_i |\xtil_i^{n-1}|^{2-\tau} + (\etil^{n-1})^{2-\tau}}{(\etil^{n})^{2-\tau}}},\textrm{ and } W_n\defleft\vectornorm{D_n^{-\frac12}D_{n-1}^{\frac12}}, \label{def:Wn}
\end{align}
for a sequence $(a_n)_{n\in\N}$, which fulfills $a_n\geq 0$ for all $n\in\N$, and $\sum\limits_{i=0}^{\infty}a_n < \infty$,
then, for each $y\in\C^{m}$, Algorithm CG-IRLS produces a non-empty set of accumulation points $\mathcal{Z}_{\tau}(y)$. Define $\etil\defleft\lim\limits_{n\rightarrow\infty}\etil^n$, then the following holds: 
\begin{enumerate}
	\item If $\etil = 0$, then $\mathcal{Z}_{\tau}(y)$ consists of a single $K$-sparse vector $\bar{x}$, which is the unique $\ltau$-minimizer in $\sF{\Phi}$. Moreover, we have
	for \HRomit any $x\in\sF{\Phi}$:
	\begin{equation}\label{eq: theorem_statement_1}
		\|x-\bar{x}\|_{\ltau}^{\tau} \le c_1\sigma_{\HR{K}}(x)_{\ltau}, \hspace{3mm} \text{with } c_1 \defleft 2\frac{1+\gamma}{1-\gamma} .
	\end{equation}
	\item If $\etil > 0$, then for each $\bar{x} \in\mathcal{Z}_{\tau}(y)\neq\emptyset$, we have $\left\langle \bar{x},\eta\right\rangle_{\hat{w}(\bar{x},\etil,\tau)} = 0$ for all $\eta \in\mathcal{N}_{\Phi}$, \HR{where $\hat{w}(\bar{x},\etil,\tau) = \left[\left||\bar{x}_{i}|^{2} + \etil^{2}\right|^{-\frac{2-\tau}{2}}\right]_{i=1}^N$}. Moreover, in the case of $\tau = 1$, $\bar{x}$ is the single element of $\mathcal{Z}_{\tau}(y)$ and $\bar{x}=x^{\etil,1} \defleft \argmin\limits_{x\in\sF{\Phi}}\sum\limits_{j=1}^{N}|x_{j}^{2} + \etil^{2}|^{\frac{1}{2}}$ (compare~\eqref{eq: x_epsilon}).
	\item Denote by $\mathcal{X}_{\etil,\tau}(y)$ the set of global minimizers of $f_{\etil,\tau}(x) \defleft \sum\limits_{j=1}^{N}|x_{j}^{2} + \etil^{2}|^{\frac{\tau}{2}}$ on $\sF{\Phi}$. If $\etil >0$ and $\bar{x} \in \mathcal{Z}_{\tau}(y) \cap \mathcal{X}_{\etil,\tau}(y)$, then for each $x \in \sF{\Phi}$ and any $ \beta < \left(\frac{1-\gamma}{1+\gamma}\frac{K+1-k}{N}\right)^{\frac{1}{\tau}}$, we have \[\vectornorm{x - \bar{x}}_{\ltau}^{\tau}\leq c_2 \sigma_k(x)_{\ltau}, \hspace{3mm} \text{with } c_2\defleft \frac{1+\gamma}{1-\gamma}\left(\frac{2+\frac{N\beta^{\tau}}{K+1-k}}{1-\frac{N\beta^{\tau}}{K+1-k}\frac{1+\gamma}{1-\gamma}}\right).\]
\end{enumerate}
\end{theorem}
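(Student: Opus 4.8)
\medskip
\noindent\textbf{Proof proposal.}
The plan is to adapt the convergence analysis of \cite{dadefogu10} to the inexact setting, keeping careful track of the error introduced by replacing the exact weighted least squares minimizer $\hat x^{n+1}$ of step~\ref{def: xtil} by the MCG iterate $\xtil^{n+1}$. The backbone will be an \emph{almost monotonicity} property of $\sJ$ along the iteration (this is Lemma~\ref{lemma:ineqfunctional}): combining Lemma~\ref{lem: orthogonality}, the Cauchy--Schwarz inequality, and the explicit bounds derived before \eqref{eq:stopcritmcg}, I would prove an inequality of the form $\sJ(\xtil^{n+1},\wtil^{n},\etil^{n})\le\sJ(\xtil^{n},\wtil^{n-1},\etil^{n-1})+a_{n+1}$, where the tolerance rule \eqref{eq:deftol} is precisely calibrated so that $\sum_n a_n<\infty$. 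Since $\sJ\ge 0$, this bounds the values $\sJ(\xtil^{n},\wtil^{n-1},\etil^{n-1})$, hence $(\xtil^n)_n$, so $\mathcal{Z}_{\tau}(y)\neq\emptyset$ by Bolzano--Weierstrass; a quantitative refinement of the decrease estimate gives $\vectornorm{\xtil^{n+1}-\xtil^{n}}_{\ltwo}\to 0$, so consecutive iterates have the same accumulation points, and the residual bound \eqref{eq:stopcritmcg} combined with \eqref{eq:deftol} yields $\vectornorm{\Phi\xtil^{n}-y}_{\ltwo}\to 0$, so all accumulation points lie in $\sF{\Phi}$. Finally $(\etil^n)_n$ is nonincreasing and nonnegative, so $\etil=\lim_n\etil^n$ exists. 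I expect this first step to be the main obstacle: establishing \emph{summability} of the errors and, simultaneously, asymptotic feasibility in the regime $\etil=0$, where the amplification factor in \eqref{eq:stopcritmcg} blows up; one has to verify that \eqref{eq:deftol} shrinks the tolerances fast enough to compensate.

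\smallskip
\noindent For \emph{(i)}, step~\ref{def: etil} gives $\etil^{n}\le\beta\, r(\xtil^{n})_{K+1}$ for all $n$; if $\etil=0$ then $\etil^n$ strictly decreases infinitely often and at those indices $\etil^{n}=\beta\, r(\xtil^{n})_{K+1}\to 0$, so any subsequential limit $\bar x$ of such iterates satisfies $r(\bar x)_{K+1}=0$, i.e.\ $\bar x$ is $K$-sparse, and $\bar x\in\sF{\Phi}$. By Lemma~\ref{lem: NSP}, $\bar x$ is the unique $\ltau$-minimizer and the only $K$-sparse point of $\sF{\Phi}$, hence every accumulation point of the feasible sequence $(\xtil^n)$ equals $\bar x$, so $\mathcal{Z}_{\tau}(y)=\{\bar x\}$, and \eqref{eq: theorem_statement_1} is exactly \eqref{eq: NSP_distance_bound}.

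\smallskip
\noindent For \emph{(ii)}, when $\etil>0$ the weights $\wtil^n_j=(|\xtil^n_j|^2+(\etil^n)^2)^{-\frac{2-\tau}{2}}$ stay in a fixed compact subset of $(0,\infty)$ and $\bar W_n,W_n$ in \eqref{eq:deftol} remain bounded while $a_n\to 0$, so $\tol_n\to 0$. From $\langle\hat x^{n+1},\eta\rangle_{\wtil^n}=0$ (Lemma~\ref{lem: orthogonality}) and $\vectornorm{\hat x^{n+1}-\xtil^{n+1}}_{\ell_2(\wtil^n)}\le\sqrt{\tol_{n+1}}$ one obtains $|\langle\xtil^{n+1},\eta\rangle_{\wtil^n}|\le\sqrt{\tol_{n+1}}\,\vectornorm{\eta}_{\ell_2(\wtil^n)}$; passing to the limit along a subsequence $\xtil^{n_k}\to\bar x$ (so also $\xtil^{n_k-1}\to\bar x$, hence $\wtil^{n_k-1}\to\hat w(\bar x,\etil,\tau)$) yields $\langle\bar x,\eta\rangle_{\hat w(\bar x,\etil,\tau)}=0$ for all $\eta\in\mathcal{N}_{\Phi}$. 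For $\tau=1$ this is precisely the first-order optimality condition for minimizing the strictly convex, coercive functional $f_{\etil,1}$ over the affine set $\sF{\Phi}$, whose minimizer is unique; hence $\bar x=x^{\etil,1}$ and $\mathcal{Z}_{1}(y)=\{x^{\etil,1}\}$.

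\smallskip
\noindent For \emph{(iii)}, assume $\etil>0$ and $\bar x\in\mathcal{Z}_{\tau}(y)\cap\mathcal{X}_{\etil,\tau}(y)$, and let $x\in\sF{\Phi}$. From $|t|^\tau\le(t^2+\etil^2)^{\tau/2}\le|t|^\tau+\etil^\tau$ one gets $\vectornorm{z}_{\ltau}^{\tau}\le f_{\etil,\tau}(z)\le\vectornorm{z}_{\ltau}^{\tau}+N\etil^{\tau}$, so minimality of $\bar x$ for $f_{\etil,\tau}$ yields $\vectornorm{\bar x}_{\ltau}^{\tau}-\vectornorm{x}_{\ltau}^{\tau}\le N\etil^{\tau}$. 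The $(K,\gamma)$-NSP implies the $(k,\gamma)$-NSP for $k\le K$, so applying Lemma~\ref{lem:NSP:invtriangle} with reference vector $z=x$ and $z'=\bar x$ (both in $\sF{\Phi}$) gives
\[
\vectornorm{x-\bar x}_{\ltau}^{\tau}\le\frac{1+\gamma}{1-\gamma}\left(\vectornorm{\bar x}_{\ltau}^{\tau}-\vectornorm{x}_{\ltau}^{\tau}+2\sigma_k(x)_{\ltau}\right)\le\frac{1+\gamma}{1-\gamma}\left(N\etil^{\tau}+2\sigma_k(x)_{\ltau}\right).
\]
Next I would bound $N\etil^\tau$: from $\etil^{n_k}\le\beta\, r(\xtil^{n_k})_{K+1}$ and $\xtil^{n_k}\to\bar x$ one gets $\etil\le\beta\, r(\bar x)_{K+1}$, and since $r(\bar x)_{K+1}$ is the smallest of the $K+1-k$ values $r(\bar x)_{k+1},\dots,r(\bar x)_{K+1}$, $r(\bar x)_{K+1}^{\tau}\le\sigma_k(\bar x)_{\ltau}/(K+1-k)$; as $z\mapsto\sigma_k(z)_{\ltau}$ satisfies the triangle inequality for $0<\tau\le 1$, $\sigma_k(\bar x)_{\ltau}\le\sigma_k(x)_{\ltau}+\vectornorm{x-\bar x}_{\ltau}^{\tau}$. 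Writing $\mu\defleft N\beta^{\tau}/(K+1-k)$, this gives $N\etil^\tau\le\mu(\sigma_k(x)_{\ltau}+\vectornorm{x-\bar x}_{\ltau}^{\tau})$; substituting and rearranging yields $(1-\tfrac{1+\gamma}{1-\gamma}\mu)\vectornorm{x-\bar x}_{\ltau}^{\tau}\le\tfrac{1+\gamma}{1-\gamma}(2+\mu)\sigma_k(x)_{\ltau}$, and since $\beta<(\tfrac{1-\gamma}{1+\gamma}\tfrac{K+1-k}{N})^{1/\tau}$ is equivalent to $\tfrac{1+\gamma}{1-\gamma}\mu<1$, dividing produces $\vectornorm{x-\bar x}_{\ltau}^{\tau}\le c_2\,\sigma_k(x)_{\ltau}$ with $c_2$ as claimed. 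The references \cite{RamlauZarzer12,Zarzer09} would enter here for the non-convex range $0<\tau<1$, in particular for the existence and manipulation of global minimizers of $f_{\etil,\tau}$.
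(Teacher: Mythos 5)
Your overall strategy --- calibrating the MCG tolerances via \eqref{eq:deftol} so that the perturbations of $\sJ$ are summable, deducing boundedness of the iterates and vanishing successive differences, and then splitting into the cases $\etil=0$ and $\etil>0$ --- is the same as the paper's, and your parts (ii) and (iii) are sound; part (iii) in fact spells out the computation that the paper delegates to \cite{dadefogu10}, and your route to feasibility of accumulation points (via the residual bound \eqref{eq:stopcritmcg}) is an acceptable variant of the paper's argument via \eqref{eq: delta_exact_xtil}. A minor slip: the ``almost monotonicity'' you need is Lemma~\ref{lem: J_ineq} combined with Lemma~\ref{lem: toleranz}; Lemma~\ref{lemma:ineqfunctional} concerns the functional $J_{\tau,\lambda}$ of Section~\ref{sec:IRLSgenltau}.

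There is, however, a genuine gap in case (i). You obtain a $K$-sparse feasible limit $\bar{x}$ only along the subsequence of indices at which the $\etil$-update in step~\ref{def: etil} is active, and then assert that \emph{every} accumulation point of $(\xtil^n)_n$ equals $\bar{x}$ ``because $\bar{x}$ is the only $K$-sparse point of $\sF{\Phi}$''. This inference does not work: accumulation points arising from other subsequences are not known to be $K$-sparse, and along the full sequence step~\ref{def: etil} only yields the useless lower bound $r(\xtil^n)_{K+1}\geq \etil^n/\beta$. Moreover, boundedness plus $\vectornorm{\xtil^{n+1}-\xtil^{n}}_{\ltwo}\to 0$ only makes the set of accumulation points closed and connected, not a singleton, so nothing you have established rules out the iterates drifting between visits to a neighborhood of $\bar{x}$. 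The paper closes exactly this hole with a functional-value argument: by \eqref{eq: Toleranz} and \eqref{eq: J_xopt_delta} the consecutive differences of $\sJ(\xtil^{k},\wtil^{k},\etil^{k})$ are bounded by $4a_{k+1}$, so a telescoping sum shows that $\sJ(\xtil^{n},\wtil^{n},\etil^{n})$ converges along the \emph{whole} sequence to the same limit $\|\bar{x}\|_{\ltau}^{\tau}$ obtained on the special subsequence; by \eqref{eq: J_equals_sqrt} this gives $\|\xtil^{n}\|_{\ltau}^{\tau}\to\|\bar{x}\|_{\ltau}^{\tau}$, and Lemma~\ref{lem:NSP:invtriangle} applied with $z'=\xtil^{n}$, $z=\bar{x}$ (where $\sigma_K(\bar{x})_{\ltau}=0$) forces $\|\xtil^{n}-\bar{x}\|_{\ltau}^{\tau}\to 0$, i.e.\ $\mathcal{Z}_{\tau}(y)=\{\bar{x}\}$. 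Without this (or an equivalent) step your argument only delivers subsequential convergence in case (i), which falls short of the theorem's claim.
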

\Rev{
\begin{remark}
\label{rem:loose_tol}
Notice that \HR{\eqref{eq:deftol} is an implicit bound on $\mathrm{tol}_{n+1}$} since it depends on $\etil^{\HR{n+1}}$, which means that in practice this value has to be updated in the MCG loop of the algorithm. To be precise, after the update of $\theta^{n+1,i+1}$ in step 4 of Algorithm~\ref{MCG} we compute $\tilde{x}^{n+1,i+1}= T^*\theta^{n+1,i+1}$ in each iteration $i$ of the MCG loop. \HR{If $\tilde{x}^{n+1,i+1}$ is $K$-sparse for some iteration $i$, then 
$\varepsilon^{n+1} = \varepsilon^{n+1,i+1} = \min\left\{\varepsilon^n,\beta r\left(\tilde{x}^{n+1,i+1}\right)_{K+1}\right\} = 0$ and $\tol_{n+1} = 0$ by 
\eqref{eq:defcnwn} and \eqref{def:Wn}. In this case, MCG and IRLS are stopped by definition.} The usage of this implicit bound is not  efficient in practice since the computation of $r(\tilde{x}^{n+1,i+1})_{K+1}$ requires a sorting of $N$ elements in each iteration of the MCG loop. While the implicit rule is required for the convergence analysis of the algorithm, we demonstrate in Section~\ref{sec:numAlgCGIRLS} that an explicit rule is sufficient for convergence in practice, and more efficient in terms of computational time.
\end{remark}
}

Knowing that the algorithm converges and leads to an adequate solution, \HR{one is also interested in how fast one approaches this solution}. Theorem~\ref{thm: rate of convergence} states that a linear rate of convergence can be established in the case of $\tau =1$. In the case of $0 < \tau < 1$ this rate is even asymptotically super-linear.

\begin{theorem} \label{thm: rate of convergence}
	Assume $\Phi$ satisfies the \textit{NSP} of order $K$ with constant $\gamma$ such that $0 < \gamma < 1- \frac{2}{K+2}$, and that $\sF{\Phi}$ contains a $k$-sparse vector $x^*$. Define $\Lambda \defleft \supp(x^*)$. Suppose that $k < K - \frac{2\gamma}{1-\gamma}$ and $0 < \nu < 1$ are such that 
	\begin{align}
		\nonumber\mu &\defleft \frac{\gamma (1+\gamma)}{(1-\nu)^{\tau(2-\tau)}\left(\min\limits_{j\in\Lambda}|x_j^*|\right)^{\tau(1-\tau)}}  \left(1 +\frac{(N-k)\beta^{\tau}}{K+1-k}\right)^{2-\tau}< 1, \\
		\nonumber R^* &\defleft \left(\nu \min\limits_{j\in \Lambda} |x_j^*|\right)^{\tau},\\
		\label{eq:tildemu} \tilde{\mu}(R^*)^{1-\tau} &\leq 1,
	\end{align}
	for \HR{some $\tilde{\mu}$ satisfying} $\mu < \tilde{\mu} < 1$. Define the error 
	\begin{equation} \label{def: error}
E_n \defleft \| \tilde{x}^n - x^*\|_{\ltau}^{\tau}.
\end{equation}
	Assume there exists $n_0$ such that
	\begin{equation}\label{eq: error induktion}
		E_{n_0} \le R^*. 
	\end{equation}
If $a_{n+1}$ and $tol_{n+1}$ are chosen as in Theorem~\ref{thm: convergence} with the additional bound
\begin{equation}
	\label{tolrec}
	\tol_{n+1} \leq \left(\frac{(\tilde{\mu}-\mu)E_n^{2-\tau}}{(NC)^{\frac{2-\tau}{2}}}\right)^{\frac{2}{\tau}},
	\end{equation}	
	then for all $n\geq n_0$, we have
	\begin{equation}
	\label{eq:errordynamicsltau}
		E_{n+1} \le \mu E_n^{2-\tau} + (NC)^{1-\frac{\tau}{2}}(\tol_{n+1})^{\frac{\tau}{2}},
	\end{equation}
	and
	\begin{equation}
	\label{eq:errordynamicsltaugeneral}
		E_{n+1} \le \tilde{\mu} E_n^{2-\tau} ,
	\end{equation}
	where ${C\defleft 3\sum\limits_{n=1}^{\infty}a_n} + \sJ \left( \tilde{x}^{1}, \wtil^{0}, \etil^{0} \right)$.
	Consequently, $\tilde{x}^n$ converges \Rev{linearly to  $x^*$} in the case of $\tau=1$. \Rev{The convergence is super-linear  in the case of $0 < \tau < 1$.} 
\end{theorem}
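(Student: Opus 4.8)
The plan is to prove the recursions \eqref{eq:errordynamicsltau} and \eqref{eq:errordynamicsltaugeneral} by induction on $n\geq n_0$, with \eqref{eq: error induktion} as base case, and then to read off the convergence statements. Throughout I would work with the \emph{exact} weighted least squares minimizer $\hat{x}^{n+1}$ of step~\ref{def: xtil} and only at the end absorb the inexactness. Since $0<\tau\leq1$ makes $t\mapsto t^\tau$ subadditive, $E_{n+1}=\|\tilde{x}^{n+1}-x^*\|_{\ltau}^{\tau}\leq\|\hat{x}^{n+1}-x^*\|_{\ltau}^{\tau}+\|\hat{x}^{n+1}-\tilde{x}^{n+1}\|_{\ltau}^{\tau}$. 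The last term I would bound by converting the $\ell_2(w^n)$--tolerance $\|\hat{x}^{n+1}-\tilde{x}^{n+1}\|_{\ell_2(w^n)}^2\leq\tol_{n+1}$ into an $\ltau$--estimate, $\|v\|_{\ltau}^{\tau}\leq N^{1-\tau/2}\bigl(\min_j w_j^n\bigr)^{-\tau/2}\|v\|_{\ell_2(w^n)}^{\tau}$, and noting that $\bigl(\min_j w_j^n\bigr)^{-\tau/2}=\bigl(\max_j(|\tilde{x}_j^n|^2+(\etil^n)^2)\bigr)^{\tau(2-\tau)/4}$ is controlled, via the $\sum_j w_j^{-\tau/(2-\tau)}$ term of the functional \eqref{eq: sJ}, by the uniform bound $\sJ(\tilde{x}^n,w^n,\etil^n)\leq C$ coming from the functional-decrease estimate used in the proof of Theorem~\ref{thm: convergence}; this is where the constant $C$ enters. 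This yields $E_{n+1}\leq\|\hat{x}^{n+1}-x^*\|_{\ltau}^{\tau}+(NC)^{1-\tau/2}(\tol_{n+1})^{\tau/2}$, and inserting \eqref{tolrec} (note $1-\tau/2=(2-\tau)/2$) immediately turns \eqref{eq:errordynamicsltau} into \eqref{eq:errordynamicsltaugeneral}.

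The heart of the proof is the bound $\|\hat{x}^{n+1}-x^*\|_{\ltau}^{\tau}\leq\mu E_n^{2-\tau}$ under the induction hypothesis $E_n\leq R^*$, which I would obtain along the classical super-linear analysis of IRLS. Set $\eta\defleft\hat{x}^{n+1}-x^*$; since both $\hat{x}^{n+1}$ and $x^*$ lie in $\sF{\Phi}$ we have $\eta\in\mathcal{N}_\Phi$, and with $\Lambda\defleft\supp(x^*)$, $\#\Lambda\leq k\leq K$, the NSP \eqref{eq: NSP} gives $\|\eta_\Lambda\|_{\ltau}^{\tau}\leq\gamma\|\eta_{\Lambda^c}\|_{\ltau}^{\tau}$, hence $\|\hat{x}^{n+1}-x^*\|_{\ltau}^{\tau}\leq(1+\gamma)\|\hat{x}^{n+1}_{\Lambda^c}\|_{\ltau}^{\tau}$ because $\eta_{\Lambda^c}=\hat{x}^{n+1}_{\Lambda^c}$. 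Applying Hölder with exponents $2/\tau$ and $2/(2-\tau)$, writing $|\hat{x}_j^{n+1}|^{\tau}=(|\hat{x}_j^{n+1}|^2 w_j^n)^{\tau/2}(w_j^n)^{-\tau/2}$ and using $(w_j^n)^{-\tau/(2-\tau)}=(|\tilde{x}_j^n|^2+(\etil^n)^2)^{\tau/2}$, yields
\[
\|\hat{x}^{n+1}_{\Lambda^c}\|_{\ltau}^{\tau}\leq\Bigl(\sum_{j\notin\Lambda}|\hat{x}_j^{n+1}|^2 w_j^n\Bigr)^{\tau/2}\Bigl(\sum_{j\notin\Lambda}(|\tilde{x}_j^n|^2+(\etil^n)^2)^{\tau/2}\Bigr)^{(2-\tau)/2}.
\]
For the second factor I would use $(a+b)^{\tau/2}\leq a^{\tau/2}+b^{\tau/2}$, $\|(\tilde{x}^n-x^*)_{\Lambda^c}\|_{\ltau}^{\tau}\leq E_n$, and step~\ref{def: etil} together with the $k$-sparsity of $x^*$ (so $(\etil^n)^{\tau}\leq\beta^{\tau}r(\tilde{x}^n)_{K+1}^{\tau}\leq\beta^{\tau}\sigma_k(\tilde{x}^n)_{\ltau}/(K+1-k)\leq\beta^{\tau}E_n/(K+1-k)$), producing the factor $\bigl(1+\tfrac{(N-k)\beta^{\tau}}{K+1-k}\bigr)^{(2-\tau)/2}E_n^{(2-\tau)/2}$. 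For the first factor I would invoke the orthogonality of Lemma~\ref{lem: orthogonality}, $\langle\hat{x}^{n+1},\eta\rangle_{w^n}=0$, to rewrite $\sum_{j\notin\Lambda}w_j^n|\hat{x}_j^{n+1}|^2=\sum_{j\in\Lambda}w_j^n\hat{x}_j^{n+1}(x_j^*-\hat{x}_j^{n+1})$; here $E_n\leq R^*$ forces $|\tilde{x}_j^n|\geq(1-\nu)|x_j^*|$ for $j\in\Lambda$, so $w_j^n|x_j^*|\leq(1-\nu)^{-(2-\tau)}\bigl(\min_{\ell\in\Lambda}|x_\ell^*|\bigr)^{\tau-1}$, and $\|\eta_\Lambda\|_1\leq\|\eta_\Lambda\|_{\ltau}\leq\gamma^{1/\tau}\|\hat{x}^{n+1}_{\Lambda^c}\|_{\ltau}$. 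Since $\|\hat{x}^{n+1}_{\Lambda^c}\|_{\ltau}$ now reappears on the right, collecting its powers, solving and squaring yields precisely $\|\hat{x}^{n+1}-x^*\|_{\ltau}^{\tau}\leq\mu E_n^{2-\tau}$ with $\mu$ as in the statement; the hypotheses $\gamma<1-\tfrac{2}{K+2}$, $k<K-\tfrac{2\gamma}{1-\gamma}$ and the admissible ranges of $\nu$ and $\beta$ are exactly what keeps all these constants finite and $\mu<\tilde{\mu}<1$.

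It then remains to close the induction and extract the rates. From \eqref{eq:errordynamicsltaugeneral} and $E_n\leq R^*$ we get $E_{n+1}\leq\tilde{\mu}(R^*)^{1-\tau}R^*\leq R^*$ by \eqref{eq:tildemu}, so the base case reproduces itself and both recursions hold for all $n\geq n_0$. For $\tau=1$ this reads $E_{n+1}\leq\tilde{\mu}E_n$ with $\tilde{\mu}<1$, hence $E_n\leq\tilde{\mu}^{\,n-n_0}E_{n_0}\to0$: linear convergence. For $0<\tau<1$, the substitution $F_n\defleft\tilde{\mu}^{1/(1-\tau)}E_n$ turns \eqref{eq:errordynamicsltaugeneral} into $F_{n+1}\leq F_n^{2-\tau}$ with $F_{n_0}\leq1$ (since $R^*\leq\tilde{\mu}^{-1/(1-\tau)}$ by \eqref{eq:tildemu}), so $F_n\to0$, $E_n\to0$, and $E_{n+1}\leq\tilde{\mu}E_n^{2-\tau}$ is super-linear convergence of order $2-\tau>1$.

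The main obstacle I anticipate is the weighted Hölder--orthogonality estimate of the second step: tracking the exponents coming from the weights $(|\tilde{x}_j^n|^2+(\etil^n)^2)^{-(2-\tau)/2}$, from the two-sided occurrence of $\|\hat{x}^{n+1}_{\Lambda^c}\|_{\ltau}$, and from the $\etil^n$-regularization, so that one lands \emph{exactly} on $\mu$ rather than on an unspecified $O(1)$ constant. In particular, getting the dependence $\bigl(\min_{j\in\Lambda}|x_j^*|\bigr)^{\tau(1-\tau)}$ right --- which collapses to the constant $1$ precisely at $\tau=1$, consistent with the merely linear rate there --- is the delicate point, and it is also where the smallness hypothesis \eqref{eq: error induktion} is genuinely used, namely through the lower bound $|\tilde{x}_j^n|\geq(1-\nu)|x_j^*|$ on the support.
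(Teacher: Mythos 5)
Your proposal is correct and follows essentially the same route as the paper's proof: the NSP plus the weighted Hölder estimate with the weights $w^n$, the orthogonality relation for the exact minimizer $\hat{x}^{n+1}$, the lower bound $|\tilde{x}_j^n|\geq(1-\nu)|x_j^*|$ on $\Lambda$ from $E_n\leq R^*$, the bound $(K+1-k)\,r(\tilde{x}^n)_{K+1}^{\tau}\leq E_n$ for the $\etil^n$-term, the absorption of the CG inexactness via $(NC)^{1-\tau/2}(\tol_{n+1})^{\tau/2}$ using the weight bound from $\sJ\leq C$, and the closing of the induction through \eqref{eq:tildemu}. Your minor variations --- stating the orthogonality identity restricted to $\Lambda^c$ (which, after expanding $\hat{x}_j^{n+1}=x_j^*+\hat{\eta}_j^{n+1}$ and discarding a nonnegative term, reduces to the paper's identity \eqref{eq: sum eta square w}) and the explicit substitution $F_n=\tilde{\mu}^{1/(1-\tau)}E_n$ for super-linearity --- are equivalent to the paper's steps.
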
	
\begin{remark}
Note that the second bound in~\eqref{tolrec}, which implies~\eqref{eq:errordynamicsltaugeneral}, is only of theoretical nature. Since the value of $E_n$ is unknown it cannot be computed in an implementation. However, heuristic choices of $\tol_{n+1}$ may fulfill this bound. Thus, in practice one can only guarantee the \enquote{asymptotic} (super-)linear convergence~\eqref{eq:errordynamicsltau}. 
\end{remark}

In the remainder of this section we aim to prove both results by means of some technical lemmas which are reported in Section~\ref{sec:resfunc} and Section~\ref{sec:fetil}.

\subsubsection{Preliminary results concerning the functional $\sJ(x,w,\etil)$}
\label{sec:resfunc}

\HR{One important issue in the investigation of the dynamics of Algorithm~CG-IRLS} is the relationship between the weighted norm of an iterate and the weighted norm of its predecessor. In the following lemma, we present some helpful estimates.
\begin{lemma}\label{lem:iteratestol}
Let $\hat{x}^n$, $\hat{x}^{n+1}$, $\tilde{x}^n$, $\tilde{x}^{n+1}$ and the respective tolerances $\tol_n$ and $\tol_{n+1}$ as defined in Algorithm~CG-IRLS. Then the inequalities
\begin{align}\label{eq:iteratestol1}
\left|\vectornorm{\hat{x}^{n+1}}_{\ltwo(w^n)} - \vectornorm{\tilde{x}^{n+1}}_{\ltwo(w^n)} \right| &\leq \sqrt{\tol_{n+1}},\text{ and}\\
\label{eq:iteratestol2}
\vectornorm{\hat{x}^{n+1}}_{\ltwo(w^n)}&\leq W_n\left(\vectornorm{\tilde{x}^{n}}_{\ltwo(w^{n-1})} + \sqrt{\tol_n}\right),
\end{align}
hold for all $n\geq 1 $, where $W_n\defleft\vectornorm{D_n^{-\frac12}D_{n-1}^{\frac12}}$.
\end{lemma}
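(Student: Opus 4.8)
The plan is to prove the two inequalities separately, as the first is essentially the definition of the tolerance together with a reverse triangle inequality, while the second requires relating the weighted norm with weights $w^n$ to the weighted norm with weights $w^{n-1}$ via a change-of-weights operator.

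For \eqref{eq:iteratestol1}: By the stopping criterion in step~\ref{def: xtil} of Algorithm~CG-IRLS, we have $\|\hat{x}^{n+1} - \tilde{x}^{n+1}\|_{\ltwo(w^n)}^2 \le \tol_{n+1}$, hence $\|\hat{x}^{n+1} - \tilde{x}^{n+1}\|_{\ltwo(w^n)} \le \sqrt{\tol_{n+1}}$. Since $\|\cdot\|_{\ltwo(w^n)}$ is a genuine norm (the weight $w^n$ has strictly positive entries, so this is a weighted $\ell_2$-norm on $\C^N$), the reverse triangle inequality gives $\left|\|\hat{x}^{n+1}\|_{\ltwo(w^n)} - \|\tilde{x}^{n+1}\|_{\ltwo(w^n)}\right| \le \|\hat{x}^{n+1} - \tilde{x}^{n+1}\|_{\ltwo(w^n)} \le \sqrt{\tol_{n+1}}$, which is exactly \eqref{eq:iteratestol1}.

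For \eqref{eq:iteratestol2}: The key observation is that $\hat{x}^{n+1} = \argmin_{x\in\sF{\Phi}}\|x\|_{\ltwo(w^n)}$ is the minimal weighted $\ltwo(w^n)$-norm element of the feasible set, so in particular $\|\hat{x}^{n+1}\|_{\ltwo(w^n)} \le \|\tilde{x}^n\|_{\ltwo(w^n)}$, because $\tilde{x}^n \in \sF{\Phi}$ as well (it is the approximate MCG solution from the previous IRLS step, which by construction satisfies the affine constraint $\Phi \tilde{x}^n = y$). Next I would compare $\|\tilde{x}^n\|_{\ltwo(w^n)}$ with $\|\tilde{x}^n\|_{\ltwo(w^{n-1})}$: writing $\|z\|_{\ltwo(w^n)} = \|D_n^{-1/2} z\|_{\ltwo}$ (where $D_n = \diag[(w_j^n)^{-1}]$, so $D_n^{-1/2} = \diag[(w_j^n)^{1/2}]$), one has $\|\tilde{x}^n\|_{\ltwo(w^n)} = \|D_n^{-1/2} D_{n-1}^{1/2} D_{n-1}^{-1/2}\tilde{x}^n\|_{\ltwo} \le \|D_n^{-1/2}D_{n-1}^{1/2}\| \cdot \|D_{n-1}^{-1/2}\tilde{x}^n\|_{\ltwo} = W_n \|\tilde{x}^n\|_{\ltwo(w^{n-1})}$. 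Finally, $\tilde{x}^n$ is the approximate solution in step $n$, so by the analogue of \eqref{eq:iteratestol1} at level $n$ we have $\|\tilde{x}^n\|_{\ltwo(w^{n-1})} \le \|\hat{x}^n\|_{\ltwo(w^{n-1})} + \sqrt{\tol_n}$; but here I actually want to end up with $\|\tilde{x}^n\|_{\ltwo(w^{n-1})} + \sqrt{\tol_n}$ on the right, so chaining the estimates $\|\hat{x}^{n+1}\|_{\ltwo(w^n)} \le \|\tilde{x}^n\|_{\ltwo(w^n)} \le W_n\|\tilde{x}^n\|_{\ltwo(w^{n-1})} \le W_n\left(\|\tilde{x}^n\|_{\ltwo(w^{n-1})} + \sqrt{\tol_n}\right)$ already yields \eqref{eq:iteratestol2} directly (the extra $\sqrt{\tol_n}$ term only weakens the bound and makes it consistent with how $c_n$ is defined in \eqref{eq:defcnwn}).

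The main obstacle — though it is more a bookkeeping subtlety than a genuine difficulty — is making sure the roles of $\hat{x}^{n+1}$ versus $\tilde{x}^n$ and the weight indices $w^n$ versus $w^{n-1}$ are tracked correctly through the chain, in particular that $\tilde{x}^n$ is feasible (lies in $\sF{\Phi}$) so that the minimality of $\hat{x}^{n+1}$ can be applied to it, and that the operator norm $W_n = \|D_n^{-1/2}D_{n-1}^{1/2}\|$ is the correct constant governing the change of weights. Everything else is an application of minimality of the weighted least-squares solution (Lemma~\ref{lem: orthogonality} guarantees uniqueness, but only minimality is needed here), submultiplicativity of the operator norm, and the reverse triangle inequality.
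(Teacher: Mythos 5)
Your proof of \eqref{eq:iteratestol1} is fine and matches the paper's. However, there is a genuine gap in your argument for \eqref{eq:iteratestol2}: the claim that $\tilde{x}^n\in\sF{\Phi}$ is false in general. The approximate iterate produced by MCG has the form $\tilde{x}^{n}=D_{n-1}\Phi^{*}\theta^{n,i}$, so $\Phi\tilde{x}^{n}=\Phi D_{n-1}\Phi^{*}\theta^{n,i}=y-\rho^{n,i}$, where $\rho^{n,i}$ is the MCG residual, which is only driven below the threshold \eqref{eq:stopcritmcg} and is not zero. Hence $\tilde{x}^n$ does not satisfy the affine constraint, and the minimality of $\hat{x}^{n+1}$ over $\sF{\Phi}$ cannot be tested against $\tilde{x}^n$; the first link of your chain, $\|\hat{x}^{n+1}\|_{\ltwo(w^n)}\leq\|\tilde{x}^n\|_{\ltwo(w^n)}$, is therefore unjustified. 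Your closing remark that the $\sqrt{\tol_n}$ term "only weakens the bound" signals the same misconception: that term is not cosmetic, it is exactly the price of passing from the feasible exact minimizer to the infeasible approximate one.

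The paper's proof avoids this by routing the comparison through $\hat{x}^n$, which \emph{is} in $\sF{\Phi}$: minimality of $\hat{x}^{n+1}$ gives $\|\hat{x}^{n+1}\|_{\ltwo(w^n)}\leq\|\hat{x}^{n}\|_{\ltwo(w^n)}$, then the change-of-weights step (identical to yours) gives $\|\hat{x}^{n}\|_{\ltwo(w^n)}\leq W_n\|\hat{x}^{n}\|_{\ltwo(w^{n-1})}$, and finally \eqref{eq:iteratestol1} applied at level $n$ yields $\|\hat{x}^{n}\|_{\ltwo(w^{n-1})}\leq\|\tilde{x}^{n}\|_{\ltwo(w^{n-1})}+\sqrt{\tol_n}$, which is precisely where the $\sqrt{\tol_n}$ in \eqref{eq:iteratestol2} originates. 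With that reordering (exact minimizer first, then weight change, then tolerance) your argument becomes correct; as written, it does not go through.
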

\begin{proof}
Inequality~\eqref{eq:iteratestol1} is a direct consequence of the triangle inequality for norms and the property that $\vectornorm{\hat{x}^{n+1} - \xtil^{n+1}}_{\ltwo(w^{n})}\leq\sqrt{\tol_{n+1}}$ of step 2 in Algorithm~CG-IRLS. \\
\HR{In order to} prove inequality~\eqref{eq:iteratestol2}, we first notice that $\hat{x}^n,\hat{x}^{n+1}\in\sF{\Phi}$. Using that $\hat{x}^{n+1}$ is the minimizer of $\vectornorm{\cdot}_{\ltwo(w^{n})}$ on $\sF{\Phi}$, 
we obtain 
\begin{align*}
\vectornorm{\hat{x}^{n+1}}_{\ltwo(w^n)} &\leq \vectornorm{\hat{x}^{n}}_{\ltwo(w^n)} = \vectornorm{D_n^{-\frac12}\hat{x}^{n}}_{\ltwo} = \vectornorm{D_n^{-\frac12}D_{n-1}^{\frac12}D_{n-1}^{-\frac12}\hat{x}^{n}}_{\ltwo}\\
&\leq \vectornorm{D_n^{-\frac12}D_{n-1}^{\frac12}}\vectornorm{D_{n-1}^{-\frac12}\hat{x}^{n}}_{\ltwo} = W_n \vectornorm{\hat{x}^{n}}_{\ltwo(w^{n-1})}
\leq W_n\left(\vectornorm{\tilde{x}^{n}}_{\ltwo(w^{n-1})} + \sqrt{\tol_n}\right),
\end{align*}
where the last inequality is due to~\eqref{eq:iteratestol1}. \\
\end{proof}

The functional $\sJ(x,w,\etil)$ \HR{obeys} the following monotonicity property.

\begin{lemma}\label{lem: J_ineq}
The inequalities 
\begin{equation}\label{sec:eq: J_ineq}
\sJ \left( \tilde{x}^{n+1}, \wtil^{n+1}, \etil^{\HR{n+1}} \right)
\le
\sJ \left( \tilde{x}^{n+1}, \wtil^{n}, \etil^{\HR{n+1}} \right)
\le
\sJ \left( \tilde{x}^{n+1}, \wtil^{n}, \etil^{\HR{n}} \right).
\end{equation}
hold for all $n\ge0$.
\end{lemma}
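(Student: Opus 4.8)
The plan is to establish the two inequalities in \eqref{sec:eq: J_ineq} separately, each essentially by inspection of the definition \eqref{eq: sJ} of $\sJ$, so that no substantial machinery is needed.

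\emph{First inequality.} I would simply invoke the defining property of $\wtil^{n+1}$ from step~\ref{def: wtil} of Algorithm~CG-IRLS, namely that $\wtil^{n+1} \in \argmin\limits_{w>0}\sJ(\tilde{x}^{n+1}, w, \etil^{n+1})$, where the minimization runs over all weight vectors with strictly positive entries. Since the while loop of Algorithm~CG-IRLS is still running we have $\etil^n \neq 0$, so the entries $w_j^n = \left(|\xtil_j^n|^2 + (\etil^n)^2\right)^{-\frac{2-\tau}{2}}$ of $\wtil^n$ are all strictly positive; hence $\wtil^n$ is an admissible competitor, and evaluating the functional at $w = \wtil^n$ yields $\sJ(\tilde{x}^{n+1}, \wtil^{n+1}, \etil^{n+1}) \le \sJ(\tilde{x}^{n+1}, \wtil^n, \etil^{n+1})$.

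\emph{Second inequality.} Here I would observe that in \eqref{eq: sJ} the parameter $\etil$ enters $\sJ(x,w,\etil)$ only through the single term $\frac{\tau}{2}\sum_{j=1}^{N}\etil^2 w_j$, which — because all $w_j>0$ — is nondecreasing as a function of $\etil \ge 0$. By step~\ref{def: etil} of Algorithm~CG-IRLS we have $\etil^{n+1} = \min\left(\etil^n,\, \beta\, r(\xtil^{n+1})_{K+1}\right) \le \etil^n$, while all the remaining terms of $\sJ$ are left unchanged when only $\etil$ is decreased (the arguments $\tilde{x}^{n+1}$ and $\wtil^n$ being held fixed). Therefore $\sJ(\tilde{x}^{n+1}, \wtil^n, \etil^{n+1}) \le \sJ(\tilde{x}^{n+1}, \wtil^n, \etil^n)$, and chaining this with the first inequality gives \eqref{sec:eq: J_ineq}.

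I do not expect any genuine obstacle in this argument; the only mild points requiring care are that $\wtil^n$ has strictly positive entries — so that it is a valid competitor in the minimization defining $\wtil^{n+1}$ — and the elementary monotonicity of $\etil \mapsto \sum_j \etil^2 w_j$. Both are immediate from the explicit update formula in step~\ref{def: wtil}, the update rule in step~\ref{def: etil}, and the standing assumption $\etil^n \neq 0$ that holds throughout the while loop.
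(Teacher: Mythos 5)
Your proof is correct and follows exactly the paper's argument: the first inequality comes from the minimization property of $\wtil^{n+1}$ (with $\wtil^n$ as an admissible competitor), and the second from $\etil^{n+1}\le\etil^n$ together with the monotonicity of $\sJ$ in $\etil$. The extra details you supply (positivity of $\wtil^n$, the explicit $\etil^2 w_j$ term) are just fleshed-out versions of the same reasoning.
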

\begin{proof}
	The first inequality follows from the minimization property of $\wtil^{n+1}$. The second inequality follows from $\etil^{\HR{n+1}}\le\etil^{\HR{n}}$.
\end{proof}

The following lemma describes how the difference of the functional, evaluated in the exact and the approximated solution can be controlled by a positive scalar $a_{n+1}$ and an appropriately chosen tolerance $\tol_{n+1}$.

\begin{lemma}\label{lem: toleranz}
Let $a_{n+1}$ be a positive scalar, $\xtil^{n+1}$, $\wtil^{n+1}$, and $\etil^{n+1}$ as described in Algorithm CG-IRLS, and $\hat{x}^{n+1} = \argmin\limits_{x \in \sF{\Phi}} \sJ\left( x, \wtil^{n}, \etil^{n} \right)$ . If we choose $\tol_{n}$ as in~\eqref{eq:deftol}, then
\begin{align}
\label{eq: Toleranz}\left|\sJ \left( \hat{x}^{n+1}, \wtil^{n+1}, \etil^{n+1} \right) - \sJ \left( \tilde{x}^{n+1}, \wtil^{n+1}, \etil^{n+1} \right)\right| &\le a_{n+1},\\
\label{eq: Toleranz2}\left|\sJ \left( \hat{x}^{n+1}, \wtil^{n}, \etil^{n} \right) - \sJ \left( \tilde{x}^{n+1}, \wtil^{n}, \etil^{n} \right) \right| &\le a_{n+1}, \text{ and}\\
\label{eq: J_xopt_delta}\sJ \left( \hat{x}^{n+1}, \wtil^{n+1}, \etil^{n+1} \right) &\le \sJ \left( \hat{x}^{n+1}, \wtil^{n}, \etil^{n} \right) +  2 a_{n+1}.
\end{align}
\end{lemma}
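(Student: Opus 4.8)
The plan is to reduce all three inequalities to a single quantitative estimate on the difference $\sJ(\hat x^{n+1},w,\etil) - \sJ(\tilde x^{n+1},w,\etil)$ for a fixed weight $w$ and fixed $\etil$, and then specialize it to $(w,\etil)=(\wtil^{n+1},\etil^{n+1})$ for \eqref{eq: Toleranz} and to $(w,\etil)=(\wtil^{n},\etil^{n})$ for \eqref{eq: Toleranz2}. Looking at the definition \eqref{eq: sJ}, the only term of $\sJ$ that depends on the $x$-variable is $\frac{\tau}{2}\sum_j |x_j|^2 w_j = \frac{\tau}{2}\|x\|_{\ltwo(w)}^2$; the remaining two sums depend only on $w$ and $\etil$. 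Hence
\[
\sJ(\hat x^{n+1},w,\etil) - \sJ(\tilde x^{n+1},w,\etil) = \frac{\tau}{2}\left(\|\hat x^{n+1}\|_{\ltwo(w)}^2 - \|\tilde x^{n+1}\|_{\ltwo(w)}^2\right),
\]
so the whole problem is to bound the difference of the two squared weighted norms. Since both are nonnegative reals with $a^2 - b^2 = (a-b)(a+b)$, I would write this as $\frac{\tau}{2}\left(\|\hat x^{n+1}\|_{\ltwo(w)} - \|\tilde x^{n+1}\|_{\ltwo(w)}\right)\left(\|\hat x^{n+1}\|_{\ltwo(w)} + \|\tilde x^{n+1}\|_{\ltwo(w)}\right)$.

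First, for the case $w = \wtil^{n}$: by \eqref{eq:iteratestol1} of Lemma~\ref{lem:iteratestol} the first factor is bounded in absolute value by $\sqrt{\tol_{n+1}}$, and by the triangle inequality together with \eqref{eq:iteratestol1} and \eqref{eq:iteratestol2} the second factor is bounded by $2\|\hat x^{n+1}\|_{\ltwo(w^n)} + \sqrt{\tol_{n+1}} \leq 2W_n(\|\tilde x^{n}\|_{\ltwo(w^{n-1})} + \sqrt{\tol_n}) + \sqrt{\tol_{n+1}} = c_n + \sqrt{\tol_{n+1}}$, using the definition of $c_n$ in \eqref{eq:defcnwn}. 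Thus
\[
\left|\sJ(\hat x^{n+1},\wtil^{n},\etil^{n}) - \sJ(\tilde x^{n+1},\wtil^{n},\etil^{n})\right| \leq \frac{\tau}{2}\sqrt{\tol_{n+1}}\left(c_n + \sqrt{\tol_{n+1}}\right) = \frac{\tau}{2}\left(\tol_{n+1} + c_n\sqrt{\tol_{n+1}}\right).
\]
The defining inequality \eqref{eq:deftol} for $\sqrt{\tol_{n+1}}$ is exactly the smaller root of the quadratic $t^2 + c_n t - \tfrac{2a_{n+1}}{\tau\bar W_{n+1}^2} = 0$, i.e.\ it guarantees $\tol_{n+1} + c_n\sqrt{\tol_{n+1}} \leq \frac{2 a_{n+1}}{\tau \bar W_{n+1}^2} \leq \frac{2a_{n+1}}{\tau}$ (since $\bar W_{n+1}\geq 1$); multiplying by $\tfrac{\tau}{2}$ yields \eqref{eq: Toleranz2}. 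To obtain \eqref{eq: Toleranz} for $w=\wtil^{n+1}$, I would repeat the argument but now need to control $\|\hat x^{n+1}\|_{\ltwo(\wtil^{n+1})}$ and $\|\tilde x^{n+1}\|_{\ltwo(\wtil^{n+1})}$ against the norms in $\wtil^{n}$; this is where the factor $\bar W_{n+1}$ enters. Using $\|z\|_{\ltwo(\wtil^{n+1})}^2 \le \bar W_{n+1}^2 \|z\|_{\ltwo(\wtil^{n})}^2$ (which follows from comparing the weights $w_i^{n+1} = (|\tilde x_i^{n+1}|^2 + (\etil^{n+1})^2)^{-(2-\tau)/2}$ with $w_i^n$ via \eqref{def:Wn}), I would transfer the bound from the $\wtil^{n}$-norm to the $\wtil^{n+1}$-norm at the price of exactly this factor $\bar W_{n+1}^2$, which is precisely why $\bar W_{n+1}^2$ appears in the denominator of \eqref{eq:deftol}.

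Finally, \eqref{eq: J_xopt_delta} is a combination of the two bounds with the monotonicity Lemma~\ref{lem: J_ineq}: using \eqref{eq: Toleranz}, then Lemma~\ref{lem: J_ineq} applied to $\tilde x^{n+1}$ to pass from $(\wtil^{n+1},\etil^{n+1})$ down to $(\wtil^{n},\etil^{n})$, and then \eqref{eq: Toleranz2} to return from $\tilde x^{n+1}$ to $\hat x^{n+1}$, we get
\[
\sJ(\hat x^{n+1},\wtil^{n+1},\etil^{n+1}) \le \sJ(\tilde x^{n+1},\wtil^{n+1},\etil^{n+1}) + a_{n+1} \le \sJ(\tilde x^{n+1},\wtil^{n},\etil^{n}) + a_{n+1} \le \sJ(\hat x^{n+1},\wtil^{n},\etil^{n}) + 2a_{n+1}.
\]
I expect the main obstacle to be bookkeeping the weight change $\wtil^{n}\to\wtil^{n+1}$ cleanly — in particular making sure the factor $\bar W_{n+1}$ genuinely dominates the ratio of weighted norms in both directions and interacts correctly with the quadratic in \eqref{eq:deftol} — rather than any conceptual difficulty; everything else is the elementary identity $a^2-b^2=(a-b)(a+b)$ plus Lemma~\ref{lem:iteratestol} and the definition of $\tol_{n+1}$ as a root of a quadratic.
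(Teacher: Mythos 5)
Your proposal is correct and follows essentially the same route as the paper: reduce everything to the $x$-dependent term $\frac{\tau}{2}\|x\|_{\ltwo(w)}^2$, control the difference via Lemma~\ref{lem:iteratestol}, transfer between the weights $\wtil^n$ and $\wtil^{n+1}$ at the price of $\bar W_{n+1}^2$ using \eqref{eq: weight_approx}, read \eqref{eq:deftol} as the root condition $\tol_{n+1}+c_n\sqrt{\tol_{n+1}}\le \frac{2a_{n+1}}{\tau\bar W_{n+1}^2}$, and obtain \eqref{eq: J_xopt_delta} by the identical chain through Lemma~\ref{lem: J_ineq}. The only cosmetic difference is that you factor $a^2-b^2$ at the level of the weighted norms (using the reverse triangle inequality for $\|\cdot\|_{\ltwo(w)}$), whereas the paper factors componentwise and applies the Cauchy--Schwarz inequality; both give the same bound.
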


\begin{proof}
The core of this proof is to find a bound on the quotient of the weights from one iteration step to the next and then to use the bound of the difference between $\hat{x}^{n+1}$ and $\xtil^{n+1}$ in the $\ell_2(w^{n})$-norm by $\tol_{n+1}$. Starting with the definition of $W_{n+1}$ in Lemma~\ref{lem:iteratestol}, the quotient of two successive weights can be estimated by
\begin{align}
\nonumber W_{n+1} &= \vectornorm{D_{n+1}^{-\frac12}D_{n}^{\frac12}} = \sqrt{\max\limits_{\HR{\ell}=1,\ldots,N}\frac{w_\HR{\ell}^{n+1}}{w_\HR{\ell}^{n}}} = \sqrt{\max\limits_{\HR{\ell}=1,\ldots,N}\frac{\left(|\xtil_{\HR{\ell}}^{n}|^2 +(\etil^n)^2\right)^{\frac{2-\tau}{2}}}{\left(|\xtil_{\HR{\ell}}^{n+1}|^2 +(\etil^{n+1})^2\right)^{\frac{2-\tau}{2}}}} \\
\label{eq: weight_approx}&\le \sqrt{\frac{\max\limits_{\HR{\ell=1,\ldots,N}} |\xtil_{\HR{\ell}}^{n}|^{2-\tau} + (\etil^n)^{2-\tau}}{(\etil^{n+1})^{2-\tau}}} = \bar{W}_{n+1},
\end{align}
where $\bar{W}_{n+1}$ was defined \HR{in~\eqref{def:Wn}}.
By choosing $\textnormal{tol}_{n+1}$ as in~\eqref{eq:deftol}, we obtain
\begingroup
\allowdisplaybreaks
\begin{align*}
&\left|\sJ \left( \hat{x}^{n+1}, \wtil^{n+1}, \etil^{n+1} \right) - \sJ \left( \tilde{x}^{n+1}, \wtil^{n+1}, \etil^{n+1} \right) \right|  \\
 =&\left|\frac{\tau}{2} \sum\limits_{j=1}^{N}{\left(|\hat{x}_{j}^{n+1}|^{2} - |\tilde{x}_{j}^{n+1}|^{2}\right)\wtil_{j}^{n+1}} \right|\\
=&\left|\frac{\tau}{2} \sum\limits_{j=1}^{N}{\left(|\hat{x}_{j}^{n+1}| - |\tilde{x}_{j}^{n+1}|\right)\left(|\hat{x}_{j}^{n+1}| + |\tilde{x}_{j}^{n+1}|\right)\wtil_{j}^{n+1}} \right|\\
\le& \frac{\tau}{2}  \left(\sum\limits_{j=1}^{N}{\left| \hat{x}_{j}^{n+1} - \tilde{x}_{j}^{n+1}\right|^2\wtil_{j}^{n+1}}\right)^{\frac12}\left(\sum\limits_{j=1}^{N}{\left|Ê|\hat{x}_{j}^{n+1}| + |\tilde{x}_{j}^{n+1}|\right|^2 \wtil_{j}^{n+1}}\right)^{\frac12} \\
 \leq &\frac{\tau}{2}\max\limits_{\HR{\ell}=1,\ldots,N}\frac{w_{\HR{\ell}}^{n+1}}{w_{\HR{\ell}}^{n}}\left(\sum\limits_{j=1}^{N}{\left| \hat{x}_{j}^{n+1} - \tilde{x}_{j}^{n+1}\right|^2\wtil_{j}^{n}}\right)^{\frac12}\left(\sum\limits_{j=1}^{N}{\left|Ê|\hat{x}_{j}^{n+1}| + |\tilde{x}_{j}^{n+1}|\right|^2 \wtil_{j}^{n}}\right)^{\frac12}\\
  \leq &\frac{\tau}{2}\bar{W}_{n+1}^2\vectornorm{\hat{x}^{n+1}-\tilde{x}^{n+1}}_{\ltwo(w^n)}\vectornorm{|\hat{x}^{n+1}|+|\tilde{x}^{n+1}|}_{\ltwo(w^n)}\\
   \leq &\frac{\tau}{2}\bar{W}_{n+1}^2\sqrt{\tol_{n+1}}\left(\vectornorm{\hat{x}^{n+1}}_{\ltwo(w^n)}+\vectornorm{\tilde{x}^{n+1}}_{\ltwo(w^n)}\right)\\
   \leq &\frac{\tau}{2}\bar{W}_{n+1}^2\sqrt{\tol_{n+1}}\left[ 2 W_n\left(\vectornorm{\tilde{x}^{n}}_{\ltwo(w^{n-1})} + \sqrt{\tol_n}\right) + \sqrt{\tol_{n+1}}\right]\\
 \leq &\frac{\tau}{2}\bar{W}_{n+1}^2\sqrt{\tol_{n+1}}\left[ c_n + \sqrt{\tol_{n+1}}\right]\leq a_{n+1}, \\
  \end{align*}
\endgroup
  where we have used the Cauchy-Schwarz inequality in the first inequality, \eqref{eq:iteratestol1} and \eqref{eq:iteratestol2} in the fifth inequality, \eqref{eq: weight_approx} in the third inequality, \HR{the definition of $c_n$ in \eqref{eq:defcnwn},} and the \HR{Assumption~\eqref{eq:deftol}} on $\textnormal{tol}_{n+1}$ in the last inequality.
  
  Since $1\leq \bar{W}_{n+1}$, we obtain~\eqref{eq: Toleranz2} by
  \begingroup
  \allowdisplaybreaks
  \begin{align*}
 |\sJ \left( \xtil^{n+1}, \wtil^{n}, \etil^{n} \right) - &\sJ \left( \hat{x}^{n+1}, \wtil^{n}, \etil^{n} \right)|  = \left|\frac{\tau}{2} \sum\limits_{j=1}^{N}{\left(|\hat{x}_{j}^{n+1}|^{2} - |\tilde{x}_{j}^{n+1}|^{2}\right)\wtil_{j}^{n}} \right|\\
  & \leq \frac{\tau}{2}\left(\sum\limits_{j=1}^{N}{\left| \hat{x}_{j}^{n+1} - \tilde{x}_{j}^{n+1}\right|^2\wtil_{j}^{n}}\right)^{\frac12}\left(\sum\limits_{j=1}^{N}{\left|Ê|\hat{x}_{j}^{n+1}| + |\tilde{x}_{j}^{n+1}|\right|^2 \wtil_{j}^{n}}\right)^{\frac12}\\
    & \leq \frac{\tau}{2}\bar{W}_{n+1}^2\left(\sum\limits_{j=1}^{N}{\left| \hat{x}_{j}^{n+1} - \tilde{x}_{j}^{n+1}\right|^2\wtil_{j}^{n}}\right)^{\frac12}\left(\sum\limits_{j=1}^{N}{\left|Ê|\hat{x}_{j}^{n+1}| + |\tilde{x}_{j}^{n+1}|\right|^2 \wtil_{j}^{n}}\right)^{\frac12}\\
  & \leq \frac{\tau}{2}\bar{W}_{n+1}^2\sqrt{\tol_{n+1}}\left[ c_n + \sqrt{\tol_{n+1}}\right]\leq a_{n+1},
 \end{align*}
 \endgroup

with the same arguments as above. \HR{Lemma \ref{lem: J_ineq} yields} 
\begin{align*}
\sJ \left( \hat{x}^{n+1}, \wtil^{n+1}, \etil^{n+1} \right) &\le \sJ \left( \tilde{x}^{n+1}, \wtil^{n+1}, \etil^{n+1} \right) + \mathnormal{a}_{n+1} \le \sJ \left( \tilde{x}^{n+1}, \wtil^{n}, \etil^{n+1} \right) + \mathnormal{a}_{n+1} \\
 &\le \sJ \left( \tilde{x}^{n+1}, \wtil^{n}, \etil^{n} \right) + \mathnormal{a}_{n+1} \le \sJ \left( \hat{x}^{n+1}, \wtil^{n}, \etil^{n} \right) + 2\mathnormal{a}_{n+1}, 
 \end{align*}
where the first inequality follows from \eqref{eq: Toleranz}, the second and third by \eqref{sec:eq: J_ineq}, and the last by~\eqref{eq: Toleranz2}. 
\end{proof}

In the above lemma, we showed that the error of the evaluations of the functional $\sJ$ on the approximate solution $\xtil^n$ and the weighted $\ell_2$-minimizer $\hat{x}^n$ can be bounded by choosing an appropriate tolerance in the algorithm. This result will be used to show that the difference between the iterates $\xtil^{n+1}$ and $\xtil^n$ becomes arbitrarily small for $n \rightarrow \infty$, as long as we choose the sequence $(a_{n})_{n\in\N}$ summable. This will be the main result of this section. Before, we
\HR{prove some further auxiliary statements concerning the functional $\sJ(x,\wtil,\etil)$ and the iterates $\xtil^{n}$ and $\wtil^{n}$}. 


\begin{lemma}\label{lem: bounds}
Let $(a_n)_{n\in \HR{\N}}$, $a_n \in \mathbb{R}_+$, be a summable sequence with $A\defleft\sum\limits_{n=1}^{\infty}a_n<\infty$, and define ${C\defleft 3A} + \sJ \left( \tilde{x}^{1}, \wtil^{0}, \etil^{0} \right)$ as in Theorem~\ref{thm: rate of convergence}. For each $n\ge1$ we have 
\begin{align} \label{eq: J_equals_sqrt}
\sJ \left( \tilde{x}^{n+1}, \wtil^{n+1}, \etil^{n+1} \right) &= \sum\limits_{j=1}^{N}{\left(|\tilde{x}_{j}^{n+1}|^{2}+(\etil^{n+1})^{2}\right)^{\frac{\tau}{2}}},\\
\label{eq: 1_norm_estimate}
\left\|\xtil^{n}\right\|_{\ell_{\tau}}^{\tau} &\leq C,\\
\label{eq: weight_bound}
\wtil_{j}^{n} &\ge C^{-\frac{2-\tau}{\tau}}, j = 1, \ldots, N,\text{ and} \\
\label{eq:l2lwNorm}
\vectornorm{x}_{\ltwo} &\leq C^{\frac{2-\tau}{2\tau}}\vectornorm{x}_{\ltwo(w^n)} \text{ for all } x\in\C^N.
\end{align}
\begin{proof}
\HR{Identity} \eqref{eq: J_equals_sqrt} follows by insertion of the definition of $\wtil^{n+1}$ in step 4 of Algorithm~CG-IRLS.

By the minimizing property of $\hat{x}^{n+1}$ and the fact that $\hat{x}^n\in\sF{\Phi}$, we have 
\[\sJ \left( \hat{x}^{n+1}, \wtil^{n}, \etil^{n} \right) \leq \sJ \left( \hat{x}^{n}, \wtil^{n}, \etil^{n} \right), \]
and thus, together with~\eqref{eq: J_xopt_delta}, it follows that
\[\sJ \left( \hat{x}^{n+1}, \wtil^{n+1}, \etil^{n+1} \right) \leq \sJ \left( \hat{x}^{n+1}, \wtil^{n}, \etil^{n} \right)+2a_{n+1} \leq \sJ \left( \hat{x}^{n}, \wtil^{n}, \etil^{n} \right)+2a_{n+1}.\]
\HR{Hence, the telescoping sum} 
\[\sum\limits_{k=1}^{n} \left(\sJ \left( \hat{x}^{k+1}, \wtil^{k+1}, \etil^{k+1} \right) - \sJ \left( \hat{x}^{k}, \wtil^{k}, \etil^{k} \right) \right)\leq 2 \sum\limits_{k=1}^{n} a_{k+1}\]
leads to the estimate
\[\sJ \left( \hat{x}^{n+1}, \wtil^{n+1}, \etil^{n+1} \right) \leq \sJ \left( \hat{x}^{1}, \wtil^{1}, \etil^{1} \right)+2A\leq \sJ \left( \tilde{x}^{1}, \wtil^{0}, \etil^{0} \right)+2A + a_1.\]
\HR{Inequality} \eqref{eq: 1_norm_estimate} then follows from \eqref{eq: Toleranz} and
\begin{align*}
\left\|\xtil^{n+1}\right\|_{\ell_{\tau}}^{\tau} &\le \sum\limits_{j=1}^{N}{\left[|\xtil_{j}^{n+1}|^{2}+(\etil^{n+1})^{2}\right]^{\frac{\tau}{2}}} = \sJ\left(\xtil^{n+1},\wtil^{n+1},\etil^{n+1}\right)\\
&\leq \sJ\left(\hat{x}^{n+1},\wtil^{n+1},\etil^{n+1}\right) + a_{n+1} \leq C, \quad \HR{ \mbox{ for all } n \geq 1}.
\end{align*}
Consequently, the bound \eqref{eq: weight_bound} follows from
\begin{equation*}
(\wtil_{j}^{n})^{-\frac{\tau}{2-\tau}} \leq \frac{2-\tau}{\tau}(\wtil_{j}^{n})^{-\frac{\tau}{2-\tau}} \le \sJ\left(\xtil^{n},\wtil^{n},\etil^{n}\right) \le C.
\end{equation*}
Inequality~\eqref{eq:l2lwNorm} is a direct consequence of~\eqref{eq: weight_bound}.
\end{proof}
\end{lemma}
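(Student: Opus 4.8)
The plan is to establish the four assertions in the order given, since each rests on the previous ones. The identity \eqref{eq: J_equals_sqrt} is a direct computation: inserting the minimizing weight $w_j^{n+1}=\bigl(|\tilde{x}_j^{n+1}|^2+(\varepsilon^{n+1})^2\bigr)^{-\frac{2-\tau}{2}}$ from step~\ref{def: wtil} of Algorithm~\ref{cg-IRLS} into the definition \eqref{eq: sJ} of $\sJ$, one checks that $\bigl(|\tilde{x}_j^{n+1}|^2+(\varepsilon^{n+1})^2\bigr)\,w_j^{n+1}$ and $(w_j^{n+1})^{-\frac{\tau}{2-\tau}}$ both collapse to $\bigl(|\tilde{x}_j^{n+1}|^2+(\varepsilon^{n+1})^2\bigr)^{\frac{\tau}{2}}$, so that the effective coefficient $\frac{\tau}{2}$ (carrying the $|x|^2 w+\varepsilon^2 w$ part) and the effective coefficient $\frac{2-\tau}{2}$ (the prefactor $\frac{\tau}{2}$ times $\frac{2-\tau}{\tau}$, carrying the $w^{-\tau/(2-\tau)}$ part) add up to $1$.

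For the uniform bound \eqref{eq: 1_norm_estimate} --- the heart of the lemma --- I would run a telescoping/monotonicity argument over the \emph{exact} minimizers $\hat{x}^n$. For $n\ge 1$, minimality of $\hat{x}^{n+1}$ over $\sF{\Phi}$ with respect to $\sJ(\cdot,w^n,\varepsilon^n)$ together with $\hat{x}^n\in\sF{\Phi}$ gives $\sJ(\hat{x}^{n+1},w^n,\varepsilon^n)\le\sJ(\hat{x}^n,w^n,\varepsilon^n)$, and combining this with the perturbation estimate \eqref{eq: J_xopt_delta} of Lemma~\ref{lem: toleranz} yields the one-step recursion $\sJ(\hat{x}^{n+1},w^{n+1},\varepsilon^{n+1})\le\sJ(\hat{x}^n,w^n,\varepsilon^n)+2a_{n+1}$. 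Summing from $1$ to $n$ and using $\sum_k a_k=A<\infty$ bounds $\sJ(\hat{x}^{n+1},w^{n+1},\varepsilon^{n+1})$ by $\sJ(\hat{x}^1,w^1,\varepsilon^1)+2A$. The initial term is handled separately: Lemma~\ref{lem: J_ineq} gives $\sJ(\tilde{x}^1,w^1,\varepsilon^1)\le\sJ(\tilde{x}^1,w^0,\varepsilon^0)$, while \eqref{eq: Toleranz} with $n=0$ gives $\sJ(\hat{x}^1,w^1,\varepsilon^1)\le\sJ(\tilde{x}^1,w^1,\varepsilon^1)+a_1$, hence $\sJ(\hat{x}^{n+1},w^{n+1},\varepsilon^{n+1})\le\sJ(\tilde{x}^1,w^0,\varepsilon^0)+2A+a_1$.

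It then remains to pass from $\hat{x}^{n+1}$ back to the actual iterate: \eqref{eq: Toleranz} gives $\sJ(\tilde{x}^{n+1},w^{n+1},\varepsilon^{n+1})\le\sJ(\hat{x}^{n+1},w^{n+1},\varepsilon^{n+1})+a_{n+1}$, and then \eqref{eq: J_equals_sqrt} together with the elementary bound $|\tilde{x}_j^{n+1}|^{\tau}\le\bigl(|\tilde{x}_j^{n+1}|^2+(\varepsilon^{n+1})^2\bigr)^{\tau/2}$ yields $\|\tilde{x}^{n+1}\|_{\ltau}^{\tau}\le\sJ(\tilde{x}^{n+1},w^{n+1},\varepsilon^{n+1})\le\sJ(\tilde{x}^1,w^0,\varepsilon^0)+2A+a_1+a_{n+1}\le C$, the last step using $a_1+a_{n+1}\le A$ (distinct nonnegative summands, since $n+1\ge 2$) together with $C=3A+\sJ(\tilde{x}^1,w^0,\varepsilon^0)$; the case $\tilde{x}^1$ is immediate from $\|\tilde{x}^1\|_{\ltau}^{\tau}\le\sJ(\tilde{x}^1,w^1,\varepsilon^1)\le\sJ(\tilde{x}^1,w^0,\varepsilon^0)\le C$. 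For \eqref{eq: weight_bound}, I observe that \eqref{eq: J_equals_sqrt} and the weight formula give $(w_j^n)^{-\tau/(2-\tau)}=\bigl(|\tilde{x}_j^n|^2+(\varepsilon^n)^2\bigr)^{\tau/2}$, which is one of the nonnegative summands of $\sJ(\tilde{x}^n,w^n,\varepsilon^n)\le C$; hence $(w_j^n)^{-\tau/(2-\tau)}\le C$, and raising to the negative power $-(2-\tau)/\tau$ reverses the inequality to $w_j^n\ge C^{-(2-\tau)/\tau}$. Finally \eqref{eq:l2lwNorm} follows immediately from $\|x\|_{\ltwo(w^n)}^2=\sum_j|x_j|^2 w_j^n\ge(\min_j w_j^n)\,\|x\|_{\ltwo}^2\ge C^{-(2-\tau)/\tau}\|x\|_{\ltwo}^2$.

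I do not anticipate a genuine obstacle; the only point requiring care is the bookkeeping in the telescoping step. One must route everything through the exact minimizers $\hat{x}^n$ --- the MCG output $\tilde{x}^n$ need not be feasible, so minimality cannot be invoked for it directly --- isolate the $n=0$ initial contribution via Lemma~\ref{lem: J_ineq} and \eqref{eq: Toleranz}, and verify that the constant $C=3A+\sJ(\tilde{x}^1,w^0,\varepsilon^0)$ was chosen precisely so as to absorb the accumulated slack $2A+a_1+a_{n+1}\le 3A$.
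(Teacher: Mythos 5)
Your proposal is correct and follows essentially the same route as the paper: the same insertion of the weight formula for \eqref{eq: J_equals_sqrt}, the same telescoping over the exact minimizers $\hat{x}^n$ using minimality plus \eqref{eq: J_xopt_delta}, the same passage back to $\tilde{x}^{n+1}$ via \eqref{eq: Toleranz}, and the same derivation of \eqref{eq: weight_bound} and \eqref{eq:l2lwNorm} from the bound on $\sJ$. Your explicit bookkeeping of the slack $2A+a_1+a_{n+1}\le 3A$ and the separate treatment of $\tilde{x}^1$ are only minor refinements of what the paper does implicitly.
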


Notice that \HR{\eqref{eq: 1_norm_estimate} states the boundedness of the iterates.} 
\HR{The lower bound \eqref{eq: weight_bound} on the weights $\wtil^n$ will become useful} in the proof of Lemma \ref{lem: finite}.

By using the estimates collected so far, we can adapt \cite[Lemma 5.1]{dadefogu10} to our situation. First, we shall prove that the differences between the $n$-th $\ell_{2}(\wtil^{n-1})$-minimizer and its successor become arbitrarily small.
\begin{lemma}\label{lem: finite}
\HR{Given} a summable sequence $(a_{n})_{n\in\mathbb{N}}$, $a_n \in \mathbb{R}_+$, the sequence $(\hat{x}^{n})_{n\in\mathbb{N}}$ satisfies
\begin{equation}\label{eq: delta_sum le 2CC}
\sum\limits_{n=1}^{\infty}{\left\|\hat{x}^{n+1} - \hat{x}^{n}\right\|_{\ell_{2}}^{2}} \le \frac{2}{\tau}C^{\frac{2}{\tau}},\end{equation}
where C is the constant of Lemma \ref{lem: bounds} and $\hat{x}^{n} = \argmin\limits_{x\in\sF{\Phi}}\sJ\left(x, \wtil^{n-1}, \etil^{n-1}\right)$. As a consequence we have
\begin{equation}\label{eq: delta_convergence}
\lim\limits_{n\rightarrow\infty}\left\|\hat{x}^{n}-\hat{x}^{n+1}\right\|_{\ell_2} = 0.
\end{equation}
\begin{proof} \HR{We have}
\begin{align*}
&\frac{2}{\tau}\left[\sJ\left(\hat{x}^{n}, \wtil^{n}, \etil^{n}\right) - \sJ\left(\hat{x}^{n+1}, \wtil^{n+1}, \etil^{n+1}\right) + 2\mathnormal{a}_{n+1}\right] \\
\ge\; & \frac{2}{\tau}\left[\sJ\left(\hat{x}^{n}, \wtil^{n}, \etil^{n}\right) - \sJ\left(\hat{x}^{n+1}, \wtil^{n}, \etil^{n}\right)\right] = \left\langle \hat{x}^{n}, \hat{x}^{n}\right\rangle_{\wtil^{n}} - \left\langle \hat{x}^{n+1}, \hat{x}^{n+1}\right\rangle_{\wtil^{n}} = \left\langle \hat{x}^{n}+\hat{x}^{n+1}, \hat{x}^{n}-\hat{x}^{n+1}\right\rangle_{\wtil^{n}} \\
 =\;& \left\langle \hat{x}^{n}-\hat{x}^{n+1}, \hat{x}^{n}-\hat{x}^{n+1}\right\rangle_{\wtil^{n}} = \sum\limits_{i=1}^{N}{\wtil_{j}^{n}|\hat{x}_{j}^{n}-\hat{x}_{j}^{n+1}|^{2}} \ge C^{-\frac{2-\tau}{\tau}}\left\|\hat{x}^{n}-\hat{x}^{n+1}\right\|_{\ell_{2}}^{2}.
\end{align*}
Here we used the fact that $\hat{x}^{n}-\hat{x}^{n+1}\in\mathcal{N}_{\Phi}$ and therefore, $\left\langle \hat{x}^{n+1}, \hat{x}^{n}-\hat{x}^{n+1}\right\rangle = 0$ and in the last step we applied the bound \HR{\eqref{eq:l2lwNorm}}. 
Summing these inequalities over $n\ge1$, we arrive at
\begin{eqnarray*}
\sum\limits_{n=1}^{N}{\left\|\hat{x}^{n}-\hat{x}^{n+1}\right\|_{\ell_{2}}^{2}} &\le& C^{\frac{2-\tau}{\tau}}\sum\limits_{n=1}^{N}{\frac{2}{\tau}\left[\sJ\left(\hat{x}^{n}, \wtil^{n}, \etil^{n}\right) - \sJ\left(\hat{x}^{n+1}, \wtil^{n+1}, \etil^{n+1}\right) + 2\mathnormal{a}_{n+1}\right]} \\
 &\le& \frac{2}{\tau}C^{\frac{2-\tau}{\tau}}\left[\sJ\left(\hat{x}^{1}, \wtil^{1}, \etil^{1}\right) + \sum\limits_{n=1}^{N}{2\mathnormal{a}_{n+1}}\right] \HR{\leq \frac{2}{\tau} C^{\frac{2}{\tau}}}.
\end{eqnarray*}
Letting $N \rightarrow \infty$ yields the desired result. 
\end{proof}
\end{lemma}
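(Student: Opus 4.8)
The plan is to follow, and adapt to the present inexact setting, the argument of \cite[Lemma~5.1]{dadefogu10}: we convert $\|\hat{x}^{n}-\hat{x}^{n+1}\|_{\ell_2}^2$ into a constant multiple of a telescoping difference of values of $\sJ$, so that summation over $n$ collapses to the bounded quantity $C$ from Lemma~\ref{lem: bounds}. The crucial first ingredient is a ``Pythagorean'' identity stemming from the minimality of $\hat{x}^{n+1}$. I would begin by noting that $\hat{x}^{n},\hat{x}^{n+1}\in\sF{\Phi}$, so that $\hat{x}^{n}-\hat{x}^{n+1}\in\mathcal{N}_{\Phi}$, and that, since $\hat{x}^{n+1}$ minimizes $\|\cdot\|_{\ell_2(w^n)}$ over $\sF{\Phi}$, Lemma~\ref{lem: orthogonality} gives $\langle\hat{x}^{n+1},\hat{x}^{n}-\hat{x}^{n+1}\rangle_{w^n}=0$. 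Hence $\hat{x}^{n}=\hat{x}^{n+1}+(\hat{x}^{n}-\hat{x}^{n+1})$ is a $w^n$-orthogonal decomposition and $\langle\hat{x}^{n},\hat{x}^{n}\rangle_{w^n}-\langle\hat{x}^{n+1},\hat{x}^{n+1}\rangle_{w^n}=\|\hat{x}^{n}-\hat{x}^{n+1}\|_{\ell_2(w^n)}^2$. Since $\sJ(x,w^n,\varepsilon^n)$ depends on $x$ only through the summand $\tfrac{\tau}{2}\|x\|_{\ell_2(w^n)}^2$, the left-hand side equals $\tfrac{2}{\tau}\bigl(\sJ(\hat{x}^{n},w^n,\varepsilon^n)-\sJ(\hat{x}^{n+1},w^n,\varepsilon^n)\bigr)$, while by the norm comparison \eqref{eq:l2lwNorm} of Lemma~\ref{lem: bounds} the right-hand side is at least $C^{-\frac{2-\tau}{\tau}}\|\hat{x}^{n}-\hat{x}^{n+1}\|_{\ell_2}^2$.

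The next step handles the mismatch between the weight and regularization indices. By inequality~\eqref{eq: J_xopt_delta} of Lemma~\ref{lem: toleranz} I would replace $\sJ(\hat{x}^{n+1},w^n,\varepsilon^n)$ by $\sJ(\hat{x}^{n+1},w^{n+1},\varepsilon^{n+1})$ at the cost of an additive $2a_{n+1}$, so that for every $n\ge1$
\[
C^{-\frac{2-\tau}{\tau}}\|\hat{x}^{n}-\hat{x}^{n+1}\|_{\ell_2}^2\le\tfrac{2}{\tau}\bigl(\sJ(\hat{x}^{n},w^n,\varepsilon^n)-\sJ(\hat{x}^{n+1},w^{n+1},\varepsilon^{n+1})+2a_{n+1}\bigr).
\]
Summing over $n=1,\dots,N$ makes the $\sJ$-terms telescope; since $\sJ\ge0$ the tail $-\sJ(\hat{x}^{N+1},w^{N+1},\varepsilon^{N+1})$ may be discarded, leaving the bracket bounded by $\sJ(\hat{x}^{1},w^1,\varepsilon^1)+2\sum_{n=1}^{N}a_{n+1}\le\sJ(\hat{x}^{1},w^1,\varepsilon^1)+2A$, which in turn is at most $C$ exactly as in the proof of Lemma~\ref{lem: bounds}. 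Multiplying through by $C^{\frac{2-\tau}{\tau}}$ yields $\sum_{n=1}^{N}\|\hat{x}^{n}-\hat{x}^{n+1}\|_{\ell_2}^2\le\tfrac{2}{\tau}C^{\frac{2-\tau}{\tau}}\cdot C=\tfrac{2}{\tau}C^{2/\tau}$, and letting $N\to\infty$ proves \eqref{eq: delta_sum le 2CC}. Since the general term of a convergent series of nonnegative numbers tends to zero, \eqref{eq: delta_convergence} follows at once.

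I expect the only genuinely delicate point to be the first step: recognizing that the minimality of $\hat{x}^{n+1}$ produces the \emph{full square} $\|\hat{x}^{n}-\hat{x}^{n+1}\|_{\ell_2(w^n)}^2$ rather than a bare inner product, which hinges on combining $\hat{x}^{n}-\hat{x}^{n+1}\in\mathcal{N}_{\Phi}$ with the orthogonality characterization of Lemma~\ref{lem: orthogonality}. Everything afterwards is careful bookkeeping: choosing the correct weight and $\varepsilon$ indices on the functional so that the telescoping is exact, absorbing the inexact-CG errors $a_{n+1}$ through Lemma~\ref{lem: toleranz}, invoking the lower bound \eqref{eq:l2lwNorm} on the weighted norm, and tracking constants so that the estimate closes to exactly $\tfrac{2}{\tau}C^{2/\tau}$ rather than a looser multiple.
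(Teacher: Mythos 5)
Your proposal is correct and follows essentially the same route as the paper's own proof: the orthogonality characterization of Lemma~\ref{lem: orthogonality} combined with $\hat{x}^{n}-\hat{x}^{n+1}\in\mathcal{N}_{\Phi}$ turns the functional difference $\sJ(\hat{x}^{n},\wtil^{n},\etil^{n})-\sJ(\hat{x}^{n+1},\wtil^{n},\etil^{n})$ into the full square $\|\hat{x}^{n}-\hat{x}^{n+1}\|_{\ell_2(w^n)}^2$, the index mismatch is absorbed via \eqref{eq: J_xopt_delta} at the cost of $2a_{n+1}$, the weighted norm is bounded below through \eqref{eq:l2lwNorm}, and telescoping plus summability of $(a_n)$ closes the bound to $\tfrac{2}{\tau}C^{2/\tau}$. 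The only difference is presentational (you start from the Pythagorean identity rather than from the functional difference), so nothing of substance needs to change.
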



The following lemma will play a major role in our proof of convergence since it shows that \HR{not only \eqref{eq: delta_convergence} holds but that also}
the difference between successive iterates becomes arbitrarily small. 

\begin{lemma}\label{lem: delta_xtil}
	Let $\xtil^{n}$ be as described in Algorithm CG-IRLS and $(a_n)_{n\in\mathbb{N}}$ be a summable sequence. Then
	\begin{equation}\label{eq: delta_xtil}
		\lim\limits_{n\rightarrow\infty} \left\|\xtil^n - \xtil^{n+1}\right\|_{\ell_2} = 0.
	\end{equation}
	\end{lemma}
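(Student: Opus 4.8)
The plan is to interpolate between the exact weighted least squares minimizers $\hat{x}^n$ and the inexact CG iterates $\xtil^n$. Lemma~\ref{lem: finite} already furnishes $\lim_{n\to\infty}\vectornorm{\hat{x}^{n}-\hat{x}^{n+1}}_{\ltwo}=0$ via~\eqref{eq: delta_convergence}, so by the triangle inequality
\[
\vectornorm{\xtil^{n}-\xtil^{n+1}}_{\ltwo}
\le \vectornorm{\xtil^{n}-\hat{x}^{n}}_{\ltwo}
+ \vectornorm{\hat{x}^{n}-\hat{x}^{n+1}}_{\ltwo}
+ \vectornorm{\hat{x}^{n+1}-\xtil^{n+1}}_{\ltwo}
\]
it suffices to show that the two outer terms vanish, i.e.\ that $\vectornorm{\hat{x}^{n}-\xtil^{n}}_{\ltwo}\to 0$.

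First I would pass from the weighted to the unweighted $\ltwo$-norm. The stopping criterion of step~2 of Algorithm~CG-IRLS controls $\vectornorm{\hat{x}^{n+1}-\xtil^{n+1}}_{\ltwo(\wtil^{n})}^{2}\le\tol_{n+1}$, and inequality~\eqref{eq:l2lwNorm} (a consequence of the uniform lower bound $\wtil_j^{n}\ge C^{-\frac{2-\tau}{\tau}}$ from~\eqref{eq: weight_bound}) then yields $\vectornorm{\hat{x}^{n+1}-\xtil^{n+1}}_{\ltwo}\le C^{\frac{2-\tau}{2\tau}}\sqrt{\tol_{n+1}}$, and likewise for index $n$. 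Hence everything reduces to proving $\tol_{n}\to 0$.

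This last fact follows directly from the defining bound~\eqref{eq:deftol}: using the elementary estimate $\sqrt{b^{2}+s}-b\le\sqrt{s}$ valid for all $b,s\ge 0$, applied with $b=c_n/2\ge 0$ and $s=2a_{n+1}/(\tau\bar{W}_{n+1}^{2})$, together with $\bar{W}_{n+1}\ge 1$, one obtains $\tol_{n+1}\le 2a_{n+1}/\tau$. Since $(a_n)_{n\in\N}$ is summable we have $a_n\to 0$, hence $\tol_n\to 0$; combining with the previous step gives $\vectornorm{\hat{x}^{n}-\xtil^{n}}_{\ltwo}\to 0$, and the claim follows.

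The argument is short and essentially bookkeeping; the one point that needs care is to use the \emph{correct} weight index, since the stopping criterion at the $(n+1)$-st outer iteration is expressed in the $\ltwo(\wtil^{n})$-norm, and to invoke the lower bound on the weights from Lemma~\ref{lem: bounds} in its \emph{uniform} (in $n$) form, so that a single constant $C$ serves for every $n$ in the weighted-to-unweighted comparison. I do not expect any genuine obstacle beyond this.
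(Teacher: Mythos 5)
Your proposal is correct and follows essentially the same route as the paper's proof: a triangle inequality through the exact minimizers $\hat{x}^n$, the vanishing of $\|\hat{x}^n-\hat{x}^{n+1}\|_{\ell_2}$ from Lemma~\ref{lem: finite}, and the weighted-to-unweighted comparison \eqref{eq:l2lwNorm} combined with the tolerance bound \eqref{eq:deftol} and $\bar{W}_n\geq 1$ to get $\|\hat{x}^n-\xtil^n\|_{\ell_2}\leq C^{\frac{2-\tau}{2\tau}}\sqrt{2a_n/\tau}\to 0$. The weight-index bookkeeping you flag is handled exactly as in the paper, so there is nothing to add.
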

	\begin{proof}
	By \eqref{eq:l2lwNorm} of Lemma~\ref{lem: bounds} and the \HR{condition \eqref{eq:deftol}} on $\tol_n$, we have
	\begin{align*}
	\vectornorm{\hat{x}^n - \tilde{x}^n}_{\ltwo} &\leq C^{\frac{2-\tau}{2\tau}} \vectornorm{\hat{x}^n - \tilde{x}^n}_{\ltwo(w^{n-1})}
		 \leq C^{\frac{2-\tau}{2\tau}} \sqrt{\tol_n}
		 \leq C^{\frac{2-\tau}{2\tau}} \left(-\frac{c_n}{2} +\sqrt{\left(\frac{c_n}{2}\right)^2}+\sqrt{\frac{2a_{n}}{\tau\bar{W}_{n}^2}}\right)\\
		& \leq C^{\frac{2-\tau}{2\tau}}\sqrt{\frac{2}{\tau}} \sqrt{a_n}
	\end{align*}
	since $\bar{W}_{n}\geq 1$ as defined in Lemma~\ref{lem: toleranz}.
	Since $(a_n)_{n\in\N}$ is summable, we conclude that
	\begin{equation}\label{eq: delta_exact_xtil}
		\lim\limits_{n \rightarrow\infty}\left\|\hat{x}^{n} - \xtil^{n}\right\|_{\ell_2} = 0.
	\end{equation}
	Together with Lemma \ref{lem: finite} we can prove our statement:
	\begin{eqnarray*}
		\lim\limits_{n \rightarrow\infty}\left\|\xtil^{n} - \xtil^{n+1}\right\|_{\ell_2}  
		&=& \lim\limits_{n \rightarrow\infty}\left\|\xtil^{n} - \hat{x}^{n} + \hat{x}^{n} - \hat{x}^{n+1} + \hat{x}^{n+1} - \xtil^{n+1}\right\|_{\ell_2}  \\
		&\leq & \lim\limits_{n \rightarrow\infty}\left\|\xtil^{n} - \hat{x}^{n}\right\|_{\ell_2}  + \lim\limits_{n \rightarrow\infty}\left\|\hat{x}^{n} - \hat{x}^{n+1}\right\|_{\ell_2}  + \lim\limits_{n \rightarrow\infty} \left\|\hat{x}^{n+1} - \xtil^{n+1}\right\|_{\ell_2}  \\
		&=& 0,
	\end{eqnarray*}
	where the first and last term vanish because of \eqref{eq: delta_exact_xtil} and the other term due to \eqref{eq: delta_convergence}.
	\end{proof}

\subsubsection{The functional $f_{\etil,\tau}(z)$}
\label{sec:fetil}

In this section, we \HR{introduce an auxiliary} functional which \HR{is useful} for the proof of convergence. 
From the monotonicity of $\etil_{n}$, we know that $\etil = \lim\limits_{n\rightarrow\infty}\etil_{n}$ exists and is nonnegative. We introduce the functional
\begin{equation}\label{eq: f_epsilon}
f_{\etil,\tau}(x) \defleft \sum\limits_{j=1}^{N}|x_{j}^{2} + \etil^{2}|^{\frac{\tau}{2}}.
\end{equation}
Note that if we \HR{would know} that $\xtil^{n}$ converges to $x$, then in view of \eqref{eq: J_equals_sqrt}, $f_{\etil,\tau}(x)$ would be the limit of $\sJ(\xtil^{n}, \wtil^{n}, \etil^{n})$. When $\etil > 0$, \HR{the Hessian is given by} $H(f_{\etil,\tau})(x) = \diag \left[\tau\frac{x_j^2(\tau-1)+\etil^{2}}{|x_{j}^{2}+\etil^{2}|^{\frac{4-\tau}{2}}}\right]_{i=1}^N$. Thus, in particular, $H(f_{\etil,1})(x)$ is strictly positive definite, so that $f_{\etil,1}$ is strictly convex and therefore has a unique minimizer 
\begin{equation}\label{eq: x_epsilon}
x^{\etil,1} \defleft \argmin\limits_{x\in\sF{\Phi}}f_{\etil,1}(x).
\end{equation}
In the case of $0 < \tau < 1$, we denote by $\mathcal{X}_{\etil,\tau}(y)$ the set of global minimizers of $f_{\etil,\tau}$  on $\sF{\Phi}$. For both cases, the minimizers are characterized by the following lemma.


\begin{lemma}\label{lem: tangens}
	Let $\etil > 0$ and $x \in \sF{\Phi}$. If $x=x^{\etil,1}$ or $x\in\mathcal{X}_{\etil,\tau}(y)$, then $\left\langle x,\eta\right\rangle_{\hat{w}(x,\etil,\tau)} = 0$ for all	$\eta \in \mathcal{N}_{\Phi}$, where $\hat{w}(x,\etil,\tau)= \left[\left||x_{i}|^{2} + \etil^{2}\right|^{-\frac{2-\tau}{2}}\right]_{i=1}^N$. In the case of $\tau=1$ also the converse is true.
\end{lemma}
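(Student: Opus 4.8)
The plan is to derive the stated orthogonality condition from the first-order optimality (Euler--Lagrange) conditions for the constrained minimization of $f_{\etil,\tau}$ on the affine subspace $\sF{\Phi}$, and conversely — in the case $\tau=1$ — to use strict convexity to upgrade a critical point to the global minimizer. First I would compute the gradient of $f_{\etil,\tau}$ at a point $x$ with $\etil > 0$: since $t \mapsto |t^2 + \etil^2|^{\tau/2}$ is smooth in $t$ whenever $\etil>0$, we get
\[
\partial_j f_{\etil,\tau}(x) = \tau\, x_j \left(|x_j|^2 + \etil^2\right)^{\frac{\tau-2}{2}} = \tau\, x_j\, \hat{w}_j(x,\etil,\tau),
\]
so that $\nabla f_{\etil,\tau}(x) = \tau\,\diag[\hat{w}(x,\etil,\tau)]\, x$ (taking real/imaginary parts appropriately in the complex case, or simply treating $\C^N \cong \R^{2N}$). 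Here $\hat{w}(x,\etil,\tau) = \left[\left||x_i|^2 + \etil^2\right|^{-\frac{2-\tau}{2}}\right]_{i=1}^N$ is exactly the weight vector appearing in the statement.

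Next I would invoke the standard Lagrange/variational condition for a (local or global) minimizer of a differentiable function over an affine subspace: if $x \in \sF{\Phi}$ minimizes $f_{\etil,\tau}$ over $\sF{\Phi}$, then the gradient $\nabla f_{\etil,\tau}(x)$ must be orthogonal to the tangent space of the constraint set, which is precisely $\mathcal{N}_{\Phi} = \ker \Phi$. Indeed, for any $\eta \in \mathcal{N}_{\Phi}$ the curve $t \mapsto x + t\eta$ stays in $\sF{\Phi}$, so $\frac{d}{dt}\big|_{t=0} f_{\etil,\tau}(x+t\eta) = 0$, i.e. $\langle \nabla f_{\etil,\tau}(x), \eta\rangle_{\ltwo} = 0$. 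Substituting the gradient formula gives $\tau\langle \diag[\hat{w}(x,\etil,\tau)]\,x, \eta\rangle_{\ltwo} = \tau \sum_{i} \hat{w}_i(x,\etil,\tau)\, x_i\, \eta_i^* = \tau\langle x,\eta\rangle_{\hat{w}(x,\etil,\tau)} = 0$, which is the claimed identity (the factor $\tau>0$ drops out). This applies equally to $x = x^{\etil,1}$ (the case $\tau=1$) and to any $x \in \mathcal{X}_{\etil,\tau}(y)$, since every global minimizer is in particular a stationary point of the restricted functional.

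For the converse in the case $\tau = 1$: suppose $x \in \sF{\Phi}$ satisfies $\langle x,\eta\rangle_{\hat{w}(x,\etil,1)} = 0$ for all $\eta \in \mathcal{N}_{\Phi}$. This says $\nabla f_{\etil,1}(x) \perp \mathcal{N}_{\Phi}$, i.e. $x$ is a stationary point of $f_{\etil,1}$ restricted to $\sF{\Phi}$. Since $f_{\etil,1}$ is strictly convex (its Hessian $H(f_{\etil,1})(x) = \diag\left[\frac{\etil^2}{|x_j^2+\etil^2|^{3/2}}\right]$ is strictly positive definite as already noted in the excerpt), its restriction to the affine subspace $\sF{\Phi}$ is also strictly convex, and a stationary point of a strictly convex function on a convex set is its unique global minimizer; hence $x = x^{\etil,1}$. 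I do not expect a serious obstacle here; the only mild care needed is the complex-variable bookkeeping (working with $\C^N$ as a real vector space of dimension $2N$ and checking that the real-linear functional $\eta \mapsto \Re\langle x,\eta\rangle_{\hat w}$ vanishing on the real subspace $\mathcal{N}_\Phi$, together with $i\mathcal{N}_\Phi = \mathcal{N}_\Phi$, forces the full complex inner product to vanish) and confirming that $\etil>0$ guarantees smoothness so the differentiation is legitimate — the potential degeneracy at $x_j = \etil = 0$ is excluded by hypothesis.
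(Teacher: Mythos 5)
Your proposal is correct and follows essentially the same route as the paper: the forward direction is the vanishing of the directional derivative $t \mapsto f_{\etil,\tau}(x+t\eta)$ at $t=0$ (the paper's function $G_{\etil,\tau}$), and the converse for $\tau=1$ rests on convexity plus the orthogonality condition. The only cosmetic difference is that the paper proves the converse by the scalar tangent-line inequality for $g(u)=[u^2+\etil^2]^{1/2}$, yielding $f_{\etil,1}(v)\geq f_{\etil,1}(x)$ directly, whereas you phrase it as ``stationary point of a (strictly) convex function on an affine set is the global minimizer'' — the same convexity argument in different words.
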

\begin{proof}
The proof is an adaptation of~\cite[Lemma 5.2, Section 7]{dadefogu10} and is presented for \HR{the sake of} completeness in Appendix~\ref{app:lemmafe}.
\end{proof}

\subsubsection{Proof of convergence}
By the results of the previous section, we are able now to prove the convergence of Algorithm~CG-IRLS. The proof is inspired by the ones of \cite[Theorem 5.3, Theorem 7.7]{dadefogu10}, see also \cite[Chapter 15.3]{fora13}, which we adapted to our case. 

	\begin{proof}[{Proof of Theorem~\ref{thm: convergence}}]
	Since $0\leq\etil^{n+1} \le \etil^{n}$ the \HR{sequence $(\etil^{n})_{n\in\N}$} always converges \HR{to some $\varepsilon > 0$}. 
	

	\hspace*{3mm} \textbf{Case} $\etil = 0$: Following the first part of the proof of~\cite[Theorems~5.3 and 7.7]{dadefogu10}, where \HR{the boundedness of the sequence $\xtil^n$ and the definition of $\varepsilon^n$ is used}, we can show that there is a subsequence $(\tilde{x}^{p_j})_{p_j\in\N}$ of $(\tilde{x}^n)_{n\in\N}$ such that $\tilde{x}^{p_j}\rightarrow \bar{x}\in \sF{\Phi}$ and $\bar{x}$ is the unique $\ell_{\tau}$-minimizer. \HR{It remains} to show that also $\tilde{x}^{n}\rightarrow\bar{x}$. \HR{To this end}, 
we \HR{first notice} that $\xtil^{p_j} \rightarrow \bar{x}$ and $\etil^{p_j}\rightarrow 0$ imply $\sJ\left(\xtil^{p_j}, \wtil^{p_j}, \etil^{p_j}\right) \rightarrow \|\bar{x}\|_{\ltau}^{\tau}$. The convergence of $\sJ\left(\xtil^n, \wtil^n, \etil^n\right) \rightarrow \|\bar{x}\|_{\ltau}^{\tau}$ is established by the following argument: For each $n\in\N$ there is exactly one \HR{$i=i(n)$} such that $p_i< n \leq p_{i+1}$. We 
\HR{use \eqref{eq: J_xopt_delta} and \eqref{eq: Toleranz} to estimate} the telescoping sum
\begin{align*}
\left| \sJ  \left( \tilde{x}^{n}, \wtil^{n}, \etil^{n} \right) - \sJ \left( \tilde{x}^{p_{i(n)}}, \wtil^{p_{i(n)}}, \etil^{p_{i(n)}} \right) \right| & \leq \sum\limits_{k=p_i}^{n-1} \left|\sJ \left( \tilde{x}^{k+1}, \wtil^{k+1}, \etil^{k+1} \right) - \sJ \left( \tilde{x}^{k}, \wtil^{k}, \etil^{k} \right) \right| \\
& 
\leq 4 \sum\limits_{k=p_{\HR{i(n)}}}^{n-1} a_{k+1}. 
\end{align*}
\HR{Since $\sum_{k=0}^\infty a_k < \infty$ this implies that 
$\lim_{n \to \infty} \left| \sJ  \left( \tilde{x}^{n}, \wtil^{n}, \etil^{n} \right) - \sJ \left( \tilde{x}^{p_{\HR{i(n)}}}, \wtil^{p_{i(n)}}, \etil^{p_{i(n)}} \right) \right| = 0$
so that} 
\[
\HR{\lim_{n \to \infty} \sJ  \left( \tilde{x}^{n}, \wtil^{n}, \etil^{n} \right) =  \|\bar{x}\|_{\ltau}^{\tau}.}
\]

Moreover \eqref{eq: J_equals_sqrt} implies
	\begin{equation*}
		\sJ\left(\xtil^n, \wtil^n, \etil^n\right) - N(\etil^n)^{\tau} 
		\le 
		\|\xtil^{n}\|_{\ltau}^{\tau}
		\le
		\sJ\left(\xtil^n, \wtil^n, \etil^n\right),
	\end{equation*}
	and thus, $\|\xtil^{n}\|_{\ltau}^{\tau} \rightarrow \|\bar{x}\|_{\ltau}^{\tau}$. Finally we invoke Lemma~\ref{lem:NSP:invtriangle} 
	with $z' = \xtil^n$ and $z=\bar{x}$ to obtain
	\begin{equation*}
		\limsup\limits_{n\rightarrow\infty} \|\xtil^n - \bar{x}\|_{\ltau}^{\tau} 
		\le
		\frac {1+\gamma}{1-\gamma}\left(\lim\limits_{n\rightarrow\infty}\|\xtil^n\|_{\ltau}^{\tau} - \|\bar{x}\|_{\ltau}^{\tau}\right) = 0,
	\end{equation*}
	which completes the proof of $\xtil^n\rightarrow \bar{x}$ in this case. 
	To see \eqref{eq: theorem_statement_1} and establish (i), invoke Lemma \ref{lem: NSP}. 
	
	
	\hspace{3mm}\textbf{Case} $\etil > 0$: By Lemma \ref{lem: bounds}, we know that $(\xtil^n)_{n\in\N}$ is a bounded sequence and hence has accumulation points. Let $(\xtil^{n_i})$ be any convergent subsequence of $(\xtil^{n})_{n\in\N}$ and let $\bar{x}\in\mathcal{Z}_{\tau}(y)$ its limit. By \eqref{eq: delta_exact_xtil}, we know that also $\bar{x}\in\sF{\Phi}$. Following the proof of~\cite[Theorem 5.3 and Theorem 7.7]{dadefogu10}, one shows that $\left\langle \bar{x},\eta\right\rangle_{\hat{w}(\bar{x},\etil,\tau)} = 0$ for all $\eta\in\mathcal{N}_{\Phi}$, \HR{where $\hat{w}(\bar{x},\etil,\tau)$ is defined as in Lemma~\ref{lem: tangens}}.

	In the case of $\tau = 1$, \HR{Lemma \ref{lem: tangens} implies} $\bar{x} = x^{\etil,1}$. Hence, $x^{\etil,1}$ is the unique accumulation point of $(\xtil^{n})_{n \in \mathbb{N}}$. This establishes (ii). 
	
	To prove (iii), assume that $\bar{x} \in \mathcal{Z}_{\tau}(y) \cap \mathcal{X}_{\etil,\tau}(y)$, and follow the proof of~\cite[Theorem 5.3, and 7.7]{dadefogu10} to conclude.	
	\end{proof}

\subsubsection{Proof of rate of convergence}

\HR{The proof follows} similar steps as in \cite[Section 6]{dadefogu10}. We define the auxiliary sequences of error vectors $\tilde{\eta}^n \defleft \tilde{x}^n - x^*$ and $\hat{\eta}^n \defleft \hat{x}^n - x^*$.
\begin{proof}[Proof of Theorem~\ref{thm: rate of convergence}]
	We apply the characterization \eqref{eq: orthogonality} with $w = \wtil^n$, $\hat{x} = \hat{x}^{n+1} = x^* + \hat{\eta}^{n+1}$, and $\eta = \hat{x}^{n+1} - x^* = \hat{\eta}^{n+1}$, which gives
	\begin{equation*}
		\sum\limits_{j=1}^{N}(x_j^* + \hat{\eta}_j^{n+1})\hat{\eta}_j^{n+1}w_j^{n} = 0. 
	\end{equation*}
	Rearranging the terms and using the fact that $x^*$ is supported on $\Lambda$, we obtain
	\begin{equation} \label{eq: sum eta square w}
		\sum\limits_{j=1}^{N}|\hat{\eta}_j^{n+1}|^2w_j^{n} = - \sum\limits_{j=1}^{N}x_j^*\hat{\eta}_j^{n+1}w_j^{n} = -\sum\limits_{j\in\Lambda}\frac{x_j^*}{[|\xtil_j^n|^2 + (\etil^n)^2]^{\frac{2-\tau}{2}}}\hat{\eta}_j^{n+1}.
	\end{equation}
	By assumption there exists $n_0$ such that $E_{n_0}\leq R^*$. We prove~\eqref{eq:errordynamicsltau}, and $E_{n} \leq R^* \Rightarrow E_{n+1} \leq R^*$ to obtain the validity for all $n\geq n_0$.
	\HR{Assuming $E_n \leq R^*$,} we have for all $j\in \Lambda$,
	\begin{equation*}
		|\tilde{\eta}_j^n| \le \|\tilde{\eta}^n\|_{\ltau} = \sqrt[\tau]{E_n} \le \nu |x_j^*|, 
	\end{equation*} 
	and thus
	\begin{equation*}
		|\xtil_j^n| 
		=
		|x_j^* +\tilde{\eta}_j^n|
		\geq 
		|x_j^*| - |\tilde{\eta}_j^n|
		\geq 
		|x_j^*| - \nu |x_j^*|,
	\end{equation*}
	so that
	\begin{equation} \label{eq: 1/1-mu}
		\frac{|x_j^*|}{[|\xtil_j^n|^2 + (\etil^n)^2]^{\frac{2-\tau}{2}}} \le \frac{|x_j^*|}{|\xtil_j^n|^{2-\tau}} \le \frac{1}{(1-\nu)^{2-\tau}|x_j^*|^{1-\tau}}.
	\end{equation}
	Hence, \eqref{eq: sum eta square w} combined with \eqref{eq: 1/1-mu} and the NSP leads to
	\begin{align*}
		& \left(\sum\limits_{j=1}^N |\hat{\eta}_j^{n+1}|^2\wtil_j^n\right)^{\tau} \le \left((1-\nu)^{2-\tau}\left(\min\limits_{j\in\Lambda}|x_j^*|\right)^{1-\tau}\right)^{-\tau} \|\hat{\eta}_\Lambda^{n+1}\|_{\ell_1}^{\tau} \\
		\le\;& \left((1-\nu)^{(2-\tau)}\left(\min\limits_{j\in\Lambda}|x_j^*|\right)^{(1-\tau)}\right)^{-\tau}\|\hat{\eta}_\Lambda^{n+1}\|_{\ltau}^{\tau}
		  \leq  \frac{\gamma }{(1-\nu)^{\tau(2-\tau)}\left(\min\limits_{j\in\Lambda}|x_j^*|\right)^{\tau(1-\tau)}} \|\hat{\eta}_{\Lambda^c}^{n+1}\|_{\ltau}^{\tau}.
	\end{align*}
\begingroup
\allowdisplaybreaks
	Combining \cite[Proposition 7.4]{dadefogu10} with the above estimate yields

	\begin{align} \label{eq: eta_square bound}
		\|\hat{\eta}_{\Lambda^c}^{n+1}\|_{\ltau}^{2\tau} 
		& =  \vectornorm{\left[\hat{\eta}_i^{n+1}(\wtil_i^n)^{-\frac{1}{\tau}}\right]_{i\in\Lambda^c}}_{\ltau(w^n)}^{2\tau} 
		 \leq   \vectornorm{\hat{\eta}^{n+1}_{\Lambda^c}}_{\ltwo(w^n)}^{2\tau} \vectornorm{\left[(\wtil_i^n)^{-\frac{1}{\tau}}\right]_{i\in\Lambda^c}}_{\ell_{\frac{2\tau}{2-\tau}}(w^n)}^{2\tau} \nonumber \\
		&\leq
		\left( \sum\limits_{j = 1}^N |\hat{\eta}_j^{n+1}|^2\wtil_j^n \right)^{\tau} 
		\left( \sum\limits_{j \in \Lambda^c} \left[|\tilde{\eta}_j^n| + \etil^n\right]^{\tau} \right)^{2-\tau}	\nonumber \\
		&\leq	
		\frac{\gamma }{(1-\nu)^{\tau(2-\tau)}\left(\min\limits_{j\in\Lambda}|x_j^*|\right)^{\tau(1-\tau)}}\|\hat{\eta}_{\Lambda^c}^{n+1}\|_{\ltau}^{\tau} \left(\|\tilde{\eta}^n\|_{\ltau}^{\tau} + (N-k)\left(\etil^n\right)^{\tau}\right)^{2-\tau}.
	\end{align}
\endgroup
It follows that
	\begin{equation*}
		\|\hat{\eta}_{\Lambda^c}^{n+1}\|_{\ltau}^{\tau} \le \frac{\gamma }{(1-\nu)^{\tau(2-\tau)}\left(\min\limits_{j\in\Lambda}|x_j^*|\right)^{\tau(1-\tau)}} \left(\|\tilde{\eta}^n\|_{\ltau}^{\tau} + (N-k)\left(\etil^n\right)^{\tau}\right)^{2-\tau}.
	\end{equation*}
	Note that this is also valid \HR{if} $\hat{\eta}_{\Lambda^c}^{n+1} = 0$ since \HR{then} 
	the left-hand side is zero and the right-hand side non-negative. We furthermore obtain
	\begin{align} 
		\nonumber \|\hat{\eta}^{n+1}\|_{\ltau}^{\tau}
		&= 
		\|\hat{\eta}_{\Lambda}^{n+1}\|_{\ltau}^{\tau}+\|\hat{\eta}_{\Lambda^c}^{n+1}\|_{\ltau}^{\tau} 
		\le 
		(1+\gamma)\|\hat{\eta}_{\Lambda^c}^{n+1}\|_{\ltau}^{\tau}\\
		\label{eq: eta_one bound}&\le
		\frac{\gamma (1+\gamma)}{(1-\nu)^{\tau(2-\tau)}\left(\min\limits_{j\in\Lambda}|x_j^*|\right)^{\tau(\tau-1)}} \left(\|\tilde{\eta}^n\|_{\ltau}^{\tau} + (N-k)\left(\etil^n\right)^{\tau}\right)^{2-\tau}.
	\end{align}
	In addition to this, we know by \cite[Lemma 4.1, 7.5]{dadefogu10}, that
\begin{equation}
	(J-j)r(x)_J^{\tau} \le \|x-x'\|_{\ltau}^{\tau} + \sigma_{j}(x')_{\ltau}.
\end{equation}	
for any $J > j$ and $x, x'\in\C^N$. Thus, we have by the definition of $\etil^n$ in step \ref{def: etil} of Algorithm~CG-IRLS that
	\begin{align}
		\nonumber (N-k)(\etil^n)^{\tau} &\leq (N-k)\beta^{\tau}\left(r(\xtil^n)_{K+1}\right)^{\tau} \le \frac{(N-k)\beta^{\tau}}{K+1-k}(\| \xtil^n - x^* \|_{\ltau}^{\tau} + \sigma_k(x^*)_{\ltau}) \\
		\label{eq:Nkcalc}& = \frac{(N-k)\beta^{\tau}}{K+1-k} \| \tilde{\eta}^n \|_{\ltau}^{\tau}
	\end{align}
	since by assumption $\sigma_k(x^*)_{\ltau} = 0$.	
	Together with \eqref{eq: eta_one bound} this yields
	\begin{align*}
	    \|\hat{\eta}^{n+1}\|_{\ltau}^{\tau} & \le \frac{\gamma (1+\gamma) }{(1-\nu)^{\tau(2-\tau)}\left(\min\limits_{j\in\Lambda}|x_j^*|\right)^{\tau(1-\tau)}}  \left(1 +\frac{(N-k)\beta^{\tau}}{K+1-k}\right)^{2-\tau}\|\tilde{\eta}^{n}\|_{\ltau}^{\tau(2-\tau)}\\
		& \le \mu E_n^{2-\tau}. 
	\end{align*}
	Finally, we obtain~\eqref{eq:errordynamicsltau} by
	\begin{align*}
	E_{n+1} & = \vectornorm{\tilde{\eta}^{n+1}}_{\ltau}^{\tau} 
	 \leq \vectornorm{\hat{\eta}^{n+1}}_{\ltau}^{\tau} + \vectornorm{\tilde{x}^{n+1}-\hat{x}^{n+1}}_{\ltau}^{\tau} \leq \vectornorm{\hat{\eta}^{n+1}}_{\ltau}^{\tau} + N^{1-\frac{\tau}{2}}\vectornorm{\tilde{x}^{n+1}-\hat{x}^{n+1}}_{\ltwo}^{\tau}\\
	& \leq \vectornorm{\hat{\eta}^{n+1}}_{\ltau}^{\tau} + (NC)^{1-\frac{\tau}{2}}\vectornorm{\tilde{x}^{n+1}-\hat{x}^{n+1}}_{\ltwo(w^n)}^{\tau} 
	 \leq  \mu E_n^{2-\tau} + (NC)^{1-\frac{\tau}{2}}(\tol_{n+1})^{\frac{\tau}{2}},
	\end{align*}
	where we used \HR{the} triangle inequality in the first inequality, \eqref{eq:l2lwNorm} in the third inequality, and $C$ is the constant from Lemma~\ref{lem: bounds}.

Equation~\eqref{eq:errordynamicsltaugeneral} then follows by condition~\eqref{tolrec}. By means of~\eqref{eq:tildemu}, we obtain 
	\[E_{n+1}\leq\tilde{\mu}E_n^{2-\tau}  \leq \tilde{\mu}\left(R^*\right)^{2-\tau} \leq R^*,\]
	and therefore the linear convergence for $\tau = 1$, and the super-linear convergence for $\tau < 1$ as soon as $n\geq n_0$.
\end{proof}

\section{Conjugate gradient acceleration of IRLS  method for $\ltau$-norm regularization}
\label{sec:IRLSgenltau}
In the previous chapter the solution $x^*$ was intended to solve the linear system $\Phi x = y$ exactly. In most engineering and physical applications such a setting
\HR{may not be required} since the measurements are perturbed by noise. \HR{In this context, it is more appropriate to work with} a functional 
that balances the residual error in the linear system with an $\ltau$-norm penalty, promoting sparsity. We consider the problem
\begin{equation}
\label{eq:prob}
\min\limits_{x}^{} \left ( F_{\tau,\lambda}(x):=||x||_{\ell_{\tau}}^{\tau} + \frac{1}{2 \lambda}||\Phi x-y||_{\ell_2}^{2} \right),
\end{equation}
where $\lambda > 0$, $\Phi  \in \C^{m\times N}$, $y\in\C^m$ is a given measurement vector, and $0 < \tau \leq 1$.
\begin{definition}
Given a real number $\varepsilon > 0$, $x \in \C^N$, and a weight vector $w\in\mathbb{R}^{N}$, $w>0$, we define
\begin{equation}
\label{eq:deffunctional}
J_{\tau,\lambda}(x,w,\varepsilon):=\frac{\tau}{2}\sum\limits_{j=1}^{N}\left[|x_j|^2 w_j + \varepsilon^2 w_j + \frac{2-\tau}{\tau}w_j^{-\frac{\tau}{2-\tau}}\right] + \frac{1}{2\lambda} ||\Phi x-y||_{\ell_2}^2.
\end{equation}
\end{definition}
Lai, Xu, and Yin in~\cite{xulaiyin} and \Rev{Daubechies and Voronin in~\cite{sergei,davo15}} showed independently that \HR{computing the optimizer of the} problem~\eqref{eq:prob} can be approached by an alternating minimization of the functional $J_{\tau,\lambda}$ with respect to $x$, $w$, and $\varepsilon$. The difference between these two works is the definition of the update rule for $\varepsilon$. Here, we chose the rule \HR{in step~\ref{eq:step4} of Algorithm~\ref{alg:IRLS}} proposed by \Rev{Daubechies and Voronin} because it allows us to show that the algorithm \HR{converges} to a minimizer of~\eqref{eq:prob} for $\tau=1$ and to critical points of ~\eqref{eq:prob} for $\tau <1$ (more precise statements will be given below). However, we \HR{were not able to} prove similar statements 
\HR{for} the rule of Lai, Xu, and Yin. \HR{It} only allows to show the convergence of the algorithm to a critical point of the smoothed functional 
\begin{equation*}
\min\limits_{x}^{} ||x||_{\ell_{\tau},\varepsilon}^{\tau} + \frac{1}{2 \lambda}||\Phi x-y||_{\ell_2}^{2},
\end{equation*}
where 
$||x||_{\ell_{\tau},\varepsilon}^{\tau}:=\sum\limits_{j=1}^{N}|x_j^2+\varepsilon^2|^{\frac{\tau}{2}}$ \HR{with $\varepsilon = \lim_{n \to \infty} \varepsilon^n$}. 
\begin{algorithm}[H]
\caption{IRLS-$\lambda$}
\label{alg:IRLS}
\begin{algorithmic}[1]
	\STATE Set $w^0:=(1,\ldots,1)$, $\varepsilon^0:= 1$, $\alpha\in(0,1]$, $\phi\in(0,\frac{1}{4-\tau})$.
	\WHILE{$\varepsilon^n > 0$}
		\STATE $x^{n+1}:=\underset{x}{\argmin} J_{\tau,\lambda}(x, w^n, \varepsilon^n)$
        \STATE $\varepsilon^{n+1} := \min\left\{\varepsilon^n, | J_{\tau,\lambda}(\tilde{x}^{n-1},w^{n-1},\varepsilon^{n-1}) - J_{\tau,\lambda}(\tilde{x}^{n},w^{n},\varepsilon^{n})|^{\phi} +\alpha^{n+1}\right\}$ \label{eq:step4}
		\STATE $w^{n+1} := \underset{w>0}{\argmin}J_{\tau,\lambda}(x^{n+1},w,\varepsilon^{n+1})$
	\ENDWHILE
\end{algorithmic}
\end{algorithm}

We approach the first step of the algorithm by 
\HR{computing} a critical point of $J_{\tau,\lambda}(\cdot,w,\varepsilon)$ \HR{via} the first order optimality condition
\begin{equation}
\label{eq:lspre}
\tau \left[ x_j w_j^n\right]_{j=1,\ldots,N} + \frac{1}{\lambda}\Phi^*(\Phi x-y) = 0,
\end{equation}
or equivalently 
\begin{equation}
\label{eq:ls}
\left(\Phi^* \Phi  + \diag\left[\lambda\tau w_j^n\right]_{j=1}^N\right)x = \Phi^* y.
\end{equation}
We denote the solution of this system by $x^{n+1}$.
The new weight $w^{n+1}$ is obtained in step 3 and can be expressed componentwise by
\begin{equation}
w_j^{n+1} = ((x_j^{n+1})^2+(\varepsilon^{n+1})^2)^{-\frac{2-\tau}{2}}.
\end{equation}
\HR{Similarly to the previous section} we propose the combination of Algorithm~\ref{alg:IRLS} with the CG method. CG is used to calculate an approximation of the solution of the linear system \eqref{eq:ls} in line 3 of the algorithm. After including the CG method, the modified algorithm which we shall consider is Algorithm CG-IRLS-$\lambda$. 

\begin{algorithm}
\caption{CG-IRLS-$\lambda$}
\label{alg:IRLSCG}
\begin{algorithmic}[1]
	\STATE Set $w^0:=(1,\ldots,1)$, $\varepsilon^0:= 1$, $\alpha\in(0,1]$, $\phi\in(0,\frac{1}{4-\tau})$.
	\WHILE{$\varepsilon^n > 0$}
		\STATE Compute $\tilde{x}^{n+1}$ by means of CG, s.t.  $||\tilde{x}^{n+1}-\hat{x}^{n+1}||_{\ell_2(w^n)}\leq \text{tol}_{n+1}$, \\where $\hat{x}^{n+1}:=\underset{x}{\argmin} J_{\tau,\lambda}(x, w^n, \varepsilon^n)$. \HR{Use $\tilde{x}^{n}$ as the initial vector for CG.}
		\STATE $\varepsilon^{n+1} := \min\left\{\varepsilon^n, | J_{\tau,\lambda}(\tilde{x}^{n-1},w^{n-1},\varepsilon^{n-1}) - J_{\tau,\lambda}(\tilde{x}^{n},w^{n},\varepsilon^{n})|^{\phi} +\alpha^{n+1}\right\}$
		\STATE $w^{n+1} := \underset{w>0}{\argmin}J_{\tau,\lambda}(\tilde{x}^{n+1},w,\varepsilon^{n+1})$
	\ENDWHILE
\end{algorithmic}
\end{algorithm}

Notice that $\tilde{x}$ always denotes the approximate solution of the minimization with respect to $x$ in line 3 and $\hat{x}$ the corresponding exact solution. Thus $\hat{x}^{n+1}$ fulfills~\eqref{eq:ls} but not $\tilde{x}^{n+1}$. 

Theorem~\ref{thm:quarteroni} \HR{provides} a stopping condition for the CG method, \HR{but as in the previous section} it is not practical for us, 
since we do not dispose of the minimizer and the computation of the condition number is computationally expensive. Therefore, we provide \HR{an alternative} stopping criterion to make sure that $\|\tilde{x}^{n+1}-\hat{x}^{n+1}\|_{\ell_2(w^n)}\leq \text{tol}_{n+1}$ is fulfilled in line 3 of Algorithm CG-IRLS-$\lambda$. 

 Let $\tilde{x}^{n+1,l}$ be the $l$-th iterate of the CG method and define 
\[A_n\defleft \Phi^* \Phi  + \diag\left[\lambda\tau w_j^n\right]_{j=1}^N.\]
Notice that the matrix $\Phi^* \Phi$ is positive semi-definite and $\lambda\tau D_n^{-1} = \lambda\tau \diag\left[w_j^n\right]_{j=1}^N$ is positive definite. Therefore, $A_n$ is positive definite and invertible, and furthermore
\begin{equation}
\label{eq:evcomp}
\lambda_{\min}(A_n)\geq \lambda_{\min}(\diag\left[\lambda\tau w_j^n\right]_{j=1}^N).
\end{equation}
We obtain
\begin{equation}
\label{eq:stopcrit}
\vectornorm{\hat{x}^{n+1}-\tilde{x}^{n+1,l}}_{\ell_2(w^n)}\leq \vectornorm{A_n^{-1}\left(\Phi^* y-A_n\tilde{x}^{n+1,l}\right)}_{\ell_2(w^n)} \leq \vectornorm{D_n^{-\frac12}}\vectornorm{A_n^{-1}}\vectornorm{r^{n+1,l}}_{\ell_2},
\end{equation}

where $r^{n+1,l}\defleft \Phi^* y-A_n\tilde{x}^{n+1,l}$ is the residual as it appears in line 5 of Algorithm~\ref{CG}. The first factor on the right-hand side of \eqref{eq:stopcrit} can be estimated by
\[\vectornorm{D_n^{-\frac12}}=\lambda_{\max}\left(D_n^{-\frac12}\right) = \sqrt{\max\limits_{j}w_j^n} = \sqrt{\max\limits_{j}\left(\left(\tilde{x}_j^n\right)^2 + \left(\varepsilon^n\right)^2\right)^{-\frac{2-\tau}{2}}}\leq \left(\varepsilon^n\right)^{-\frac{2-\tau}{2}}.\]
The second factor of \eqref{eq:stopcrit} is estimated by
\begin{align*}
\vectornorm{A_n^{-1}} &= \left(\lambda_{\min}(A_n)\right)^{-1} \leq \left(\lambda_{\min}(\diag\left[\lambda\tau w_j^n\right]_{j=1}^N)\right)^{-1} = \left(\lambda\tau \left(\left(\max\limits_{j}|\tilde{x}_j^n|\right)^2 + \left(\varepsilon^n\right)^2\right)^{-\frac{2-\tau}{2}}\right)^{-1},
\end{align*}

where we used \eqref{eq:evcomp} in the inequality. Thus, we obtain
\[ \vectornorm{\tilde{x}^{n+1}-\hat{x}^{n+1,l}}_{\ell_2(w^n)}\leq \frac{ \left(\left(\max\limits_{j}|\tilde{x}_j^n|\right)^2 + \left(\varepsilon^n\right)^2\right)^{\frac{2-\tau}{2}}}{\left(\varepsilon^n\right)^{\frac{2-\tau}{2}} \lambda\tau} \vectornorm{r^{n+1,l}}_{\ell_2},\]
and the suitable stopping condition
\begin{equation}
\label{eq:stopcritR}
 \vectornorm{r^{n+1,l}}_{\ell_2}\leq \frac{\left(\varepsilon^n\right)^{\frac{2-\tau}{2}} \lambda\tau}{ \left(\left(\max\limits_{j}|\tilde{x}_j^n|\right)^2 + \left(\varepsilon^n\right)^2\right)^{\frac{2-\tau}{2}}}\;\tol_{n+1}.
 \end{equation}

In the remainder of this section, we clarify how to choose the tolerance $\tol_{n+1}$, and establish a convergence result of the algorithm. In the case of $\tau=1$, the problem~\eqref{eq:prob} is the minimization of the well-known LASSO functional. It is convex, and the optimality conditions can be stated in terms of subdifferential inclusions. We are able to show that at least a subsequence of the algorithm is converging to a solution of~\eqref{eq:prob}. If $0<\tau<1$, the problem is non-convex and non-smooth. Necessary first order optimality conditions for a global minimizer of this functional were derived in~\cite[Proposition 3.14]{BrediesLorenz14}, and~\cite[Theorem 2.2]{ItoKunisch14}. In our case, we are able to show that the non-zero components of the limits of the algorithm fulfill the respective conditions. However, as soon as the algorithm is producing zeros in some components of the limit, so far, we were not able to verify the conditions mentioned above. On this account, we pursue a different strategy, which originates from~\cite{Zarzer09}. We do not directly show that the algorithm computes a solution of problem~\eqref{eq:prob}. Instead we show that a subsequence of the algorithm is at least computing a point $x^{\dagger}$, whose transformation $\breve{x}^{\dagger}= \mathcal{N}_{\upsilon/\tau}^{-1}(x^{\dagger})$ is a {\it critical point} of the new functional 
\begin{equation}
\label{eq:transprob}
 \breve{F}_{\upsilon,\lambda}(x):=\vectornorm{x}_{\ell_{\upsilon}}^{\upsilon} +\frac{1}{2\lambda}\vectornorm{\Phi\mathcal{N}_{\upsilon / \tau}(x)-y}_{\ell_2}^2,
 \end{equation}
 where 
 \begin{equation}
 \label{eq:bijmap}
 \mathcal{N}_{\zeta}\colon \HR{\C^N}\rightarrow\HR{\C^N},\quad\left(\mathcal{N}_{\zeta}(x)\right)_j := \sign(x_j)|x_j|^{\zeta},\quad j=1,\ldots,N,
 \end{equation}
  is a continuous bijective mapping and $1 < \upsilon\leq 2$.
It was shown in~\cite{Zarzer09,RamlauZarzer12} that assuming $\breve{x}^{\dagger}$ is a global minimizer of $\breve{F}_{\upsilon,\lambda}(x)$ implies that $x^{\dagger}$ is a global minimizer of $F_{\tau,\lambda}$, i.e., a solution of problem~\eqref{eq:prob}. Furthermore, it was also shown that this result can be partially extended to local minimizers. We comment \HR{on} this issue in Remark~\ref{rem:locminramlau}. These considerations allow us to state the main convergence result.
\begin{theorem}
\label{thm:conv}
Let $0<\tau\leq 1$, $\lambda > 0$, $\Phi  \in \C^{m\times N}$, and $y\in\C^m$. Define the sequences $(\tilde{x}^n)_{n\in\N}$, $(\varepsilon^n)_{n\in\N}$ and $(w^n)_{n\in\N}$ as \HR{the ones} generated by Algorithm CG-IRLS-$\lambda$. 
Choose  the accuracy $\tol_n$ of the CG-method, such that 
\begin{eqnarray}
\label{eq:tol}
\tol_{n} &\leq & \min\left\{a_{n}\left(\sqrt{2\bar{J}\tau} C_{w^{n-1}}+ 2\sqrt{\frac{2\bar{J}}{\lambda}} \sqrt{\left(\frac{2-\tau}{\tau\bar{J}}\right)^{-\frac{2-\tau}{\tau}}} ||\Phi||\right)^{-1},\right.\nonumber \\
& &\phantom{xxxx} \left.\sqrt{a_{n}}\left(\frac{\tau}{2} +\frac{||\Phi||^2}{2\lambda}\left(\frac{2-\tau}{\tau\bar{J}}\right)^{-\frac{2-\tau}{\tau}}\right)^{-\frac{1}{2}}\right\},\\ 
\label{eq:tolcwn} \text{ with }C_{w^{n-1}} &\defleft &\left(\frac{\max\limits_{j}(\tilde{x}_j^{n-1})^2 +(\varepsilon^{n-1})^2}{(\varepsilon^{n})^2}\right)^{1-\frac{\tau}{2}},
\end{eqnarray}
where \HR{$(a_n)_{n\in\N}$ is a positive sequence satisfying $\sum_{n=0}^\infty a_n < \infty$} and $\bar{J}:=  J_{\tau, \lambda}(\tilde{x}^{1},w^{0},\varepsilon^{0})$.

Then the sequence $(\tilde{x}^n)_{n\in\N}$ has at least one convergent subsequence $(\tilde{x}^{n_k})_{n_k\in\N}$. In the case that $\tau = 1$ and $x^\lambda \neq 0$, any convergent subsequence is such that its limit $x^{\lambda}$ is a minimizer of $F_{1,\lambda}(x)$. 
In the case that $0<\tau <1$, the subsequence $(\tilde{x}^{n_k})_{n_k\in\N}$ can be chosen such that the transformation of its limit $\breve{x}^{\lambda}:=\mathcal{N}_{\upsilon / \tau}^{-1}(x^{\lambda})$, $1<\upsilon\leq 2$, as defined in~\eqref{eq:bijmap}, is a critical point of~\eqref{eq:transprob}. If $\breve{x}^{\lambda}$ is  a global minimizer of~\eqref{eq:transprob}, then $x^{\lambda}$ is also a global minimizer of $F_{\tau,\lambda}(x)$.
\end{theorem}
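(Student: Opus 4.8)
The plan is to mirror, step by step, the analysis of Algorithm~CG-IRLS in Section~\ref{sec:irlsconv}, replacing the ingredients that relied on the affine constraint $\Phi x = y$ and the Null Space Property by the strong convexity of the (now unconstrained) functional $x \mapsto J_{\tau,\lambda}(x,w,\varepsilon)$, and adding the transformation argument of Zarzer and Ramlau for the non-convex regime $0<\tau<1$. First I would establish the analogue of Lemma~\ref{lem: toleranz}: with $\tol_n$ chosen as in~\eqref{eq:tol}, the inexactness of the CG solve contributes at most $a_{n+1}$ to the functional, i.e. $\left| J_{\tau,\lambda}(\hat{x}^{n+1},w,\etil) - J_{\tau,\lambda}(\tilde{x}^{n+1},w,\etil) \right| \le a_{n+1}$ for the relevant weight/$\etil$ pairs. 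This splits into the weighted $\ltwo$ part $\tfrac{\tau}{2}\sum_j |x_j|^2 w_j$, which -- exactly as in Lemma~\ref{lem: toleranz}, via Cauchy--Schwarz and a difference-of-squares -- produces a bound \emph{linear} in $\vectornorm{\hat{x}^{n+1}-\tilde{x}^{n+1}}_{\ltwo(w^n)}\le\tol_{n+1}$ (whence the first, $a_n$-linear term in~\eqref{eq:tol}, with $C_{w^{n-1}}$ absorbing the weight quotient), and the quadratic residual part $\tfrac1{2\lambda}\vectornorm{\Phi x-y}_{\ltwo}^2$, which via the identity $\vectornorm{\Phi\hat{x}^{n+1}-y}_{\ltwo}^2-\vectornorm{\Phi\tilde{x}^{n+1}-y}_{\ltwo}^2=\langle\Phi(\hat{x}^{n+1}-\tilde{x}^{n+1}),\Phi(\hat{x}^{n+1}+\tilde{x}^{n+1})-2y\rangle_{\ltwo}$ yields both a linear and a genuinely quadratic-in-$\tol_{n+1}$ contribution (whence the $\sqrt{a_n}$ term). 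All constants are traced back to the uniform bound $J_{\tau,\lambda}(\tilde{x}^n,w^n,\etil^n)\le\bar{J}$ and the resulting uniform positive lower bound on the weights, in complete analogy to Lemma~\ref{lem: bounds}.

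\textbf{Monotonicity, boundedness, and $\etil\to0$.} The alternating minimization (exact in $w$, monotone in $\etil$ since $J_{\tau,\lambda}$ increases in $\etil$ and $\etil^{n+1}\le\etil^n$) together with the previous estimate gives $J^{n+1}\le J^n+2a_{n+1}$, where $J^n\defleft J_{\tau,\lambda}(\tilde{x}^n,w^n,\etil^n)\ge0$. Introducing the shifted sequence $\tilde{J}^n\defleft J^n+2\sum_{k>n}a_k$, which is non-increasing and bounded below, shows that $(J^n)$ converges; in particular $J^n\le\bar{J}+2\sum_k a_k$, so $\vectornorm{\tilde{x}^n}_{\ltau}^\tau$ is bounded, hence $(\tilde{x}^n)$ has a convergent subsequence $\tilde{x}^{n_k}\to x^\lambda$, and the weights are uniformly bounded below. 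Crucially -- and this is the feature distinguishing the Daubechies--Voronin $\etil$-update from the one in Section~\ref{sec:IRLSltau} -- convergence of $(J^n)$ forces $|J^{n-1}-J^n|\to0$, so by step~4 of Algorithm~CG-IRLS-$\lambda$ one gets $\etil^{n+1}\le|J^{n-1}-J^n|^\phi+\alpha^{n+1}\to0$; hence $\etil\defleft\lim_n\etil^n=0$.

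\textbf{Consecutive iterates and the limit system.} Since $x\mapsto J_{\tau,\lambda}(x,w^n,\etil^n)$ has Hessian $\tau\diag[w_j^n]+\tfrac1\lambda\Phi^*\Phi\succeq\tau(\min_j w_j^n)I$ with $\min_j w_j^n$ bounded below, it is $\mu$-strongly convex uniformly in $n$; applying the minimizer inequality at $\hat{x}^{n+1}$, evaluated at $\hat{x}^n$, and telescoping (again absorbing the $a_n$-errors via the first paragraph) yields $\sum_n\vectornorm{\hat{x}^{n+1}-\hat{x}^n}_{\ltwo}^2<\infty$, in analogy to Lemma~\ref{lem: finite}. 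Combined with $\vectornorm{\hat{x}^n-\tilde{x}^n}_{\ltwo}\le(\min_j w_j^{n-1})^{-1/2}\tol_n=O(\sqrt{a_n})\to0$, this gives $\vectornorm{\tilde{x}^{n+1}-\tilde{x}^n}_{\ltwo}\to0$ (analogue of Lemma~\ref{lem: delta_xtil}). Hence along the subsequence $\hat{x}^{n_k}\to x^\lambda$, $\tilde{x}^{n_k-1}\to x^\lambda$ and $\etil^{n_k-1}\to0$, so $w_j^{n_k-1}\to|x_j^\lambda|^{-(2-\tau)}$ for every $j$ with $x_j^\lambda\neq0$. Passing to the limit in the optimality system~\eqref{eq:ls} for $\hat{x}^{n_k}$ -- which reads $\lambda\tau w_j^{n}\hat{x}_j^{n+1}=(\Phi^*(y-\Phi\hat{x}^{n+1}))_j$ componentwise -- gives, for such $j$, the relation $\tfrac1\lambda(\Phi^*(\Phi x^\lambda-y))_j+\tau|x_j^\lambda|^{\tau-1}\sign(x_j^\lambda)=0$. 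For $\tau=1$ with $x^\lambda\neq0$, at indices $j$ with $x_j^\lambda=0$ one bounds $|(\Phi^*(\Phi x^\lambda-y))_j|=\lim_k\lambda|w_j^{n_k-1}\hat{x}_j^{n_k}|$ and verifies $\le\lambda$ via the a~priori control on the ratio $|\hat{x}_j^{n}|/\sqrt{(\tilde{x}_j^{n-1})^2+(\etil^{n-1})^2}$ as in~\cite{sergei,davo15}; the two conditions together are precisely $0\in\partial F_{1,\lambda}(x^\lambda)$, and by convexity $x^\lambda$ is a global minimizer of $F_{1,\lambda}$. The same argument applies to any convergent subsequence.

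\textbf{The non-convex case and the main obstacle.} For $0<\tau<1$ the zero components of $x^\lambda$ cannot be handled directly in $F_{\tau,\lambda}$, so one passes to the transformed problem~\eqref{eq:transprob}: choose any convergent subsequence $\tilde{x}^{n_k}\to x^\lambda$ and set $\breve{x}^\lambda\defleft\mathcal{N}_{\upsilon/\tau}^{-1}(x^\lambda)$, i.e. $|\breve{x}_j^\lambda|=|x_j^\lambda|^{\tau/\upsilon}$ with $1<\upsilon\le2$. Since $\upsilon>1$ and $\upsilon/\tau>1$, the functional $\breve{F}_{\upsilon,\lambda}$ is differentiable, and its $j$-th partial derivative at $\breve{x}^\lambda$ equals $\upsilon|\breve{x}_j^\lambda|^{\upsilon-1}\sign(\breve{x}_j^\lambda)+\tfrac1\lambda\cdot\tfrac\upsilon\tau|\breve{x}_j^\lambda|^{\upsilon/\tau-1}(\Phi^*(\Phi x^\lambda-y))_j$. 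If $x_j^\lambda\neq0$, dividing by the positive factor $\tfrac\upsilon\tau|\breve{x}_j^\lambda|^{\upsilon/\tau-1}$ and using $|\breve{x}_j^\lambda|^{\upsilon(\tau-1)/\tau}=|x_j^\lambda|^{\tau-1}$ reduces it to the relation already established; if $x_j^\lambda=0$, both summands vanish identically because the exponents $\upsilon-1$ and $\upsilon/\tau-1$ are strictly positive -- this is exactly the point of the transformation. Hence $\nabla\breve{F}_{\upsilon,\lambda}(\breve{x}^\lambda)=0$, i.e. $\breve{x}^\lambda$ is a critical point of~\eqref{eq:transprob}; and since $\breve{F}_{\upsilon,\lambda}(\breve{x})=F_{\tau,\lambda}(\mathcal{N}_{\upsilon/\tau}(\breve{x}))$ with $\mathcal{N}_{\upsilon/\tau}$ a continuous bijection of $\C^N$, global minimizers correspond, which combined with~\cite{Zarzer09,RamlauZarzer12} gives the final claim. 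I expect \textbf{the main obstacle} to be twofold: (i) pinning down the explicit tolerance~\eqref{eq:tol}, with the correct $\bar{J}$-dependent constants, so that the perturbed alternating minimization retains the almost-monotonicity $J^{n+1}\le J^n+2a_{n+1}$ while remaining implementable; and (ii) the limit passage at the zero components when $\tau=1$, i.e. the uniform control of $w_j^n\hat{x}_j^n$, which is the technical crux imported from~\cite{sergei,davo15}.
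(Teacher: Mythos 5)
Your skeleton is the paper's: choose $\tol_n$ so that the inexact CG solve perturbs $J_{\tau,\lambda}$ by a summable $a_n$ (this is Lemma~\ref{lemma:propertiesfunctional}), deduce boundedness, $\varepsilon^n\to 0$, $\|\hat{x}^{n+1}-\hat{x}^n\|_{\ell_2}\to 0$ and $\|\hat{x}^n-\tilde{x}^n\|_{\ell_2}\to 0$ (Lemma~\ref{lemma:asymptotic}), pass to the limit in the normal equations \eqref{eq:lspre} at the nonzero components, and use the transformation $\mathcal{N}_{\upsilon/\tau}$ with \cite{Zarzer09,RamlauZarzer12} for $0<\tau<1$. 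There are, however, two genuine gaps. The first concerns monotonicity: writing $J^n$ for $J_{\tau,\lambda}(\tilde{x}^n,w^n,\varepsilon^n)$, you derive only $J^{n+1}\le J^n+2a_{n+1}$, while the constants of your perturbation estimates (and of \eqref{eq:tol} itself) are, in your own words, traced back to the uniform bound $J^n\le \bar{J}$ --- which you then recover only in the weaker form $J^n\le \bar{J}+2\sum_k a_k$. As written this is circular, and the weaker bound would in any case alter the $\bar{J}$-dependent constants in \eqref{eq:tol}. The missing observation (Lemma~\ref{lemma:ineqfunctional}) is that CG is a descent method warm-started at $\tilde{x}^n$, so that $J_{\tau,\lambda}(\tilde{x}^{n+1},w^n,\varepsilon^n)\le J_{\tau,\lambda}(\tilde{x}^n,w^n,\varepsilon^n)$ holds unconditionally; this yields exact monotonicity $J^{n+1}\le J^n\le\bar{J}$ before any tolerance analysis, and this same monotonicity is what is invoked later to extend the $\tau=1$ conclusion to every convergent subsequence.

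The second gap is the case $\tau=1$, $x_j^\lambda=0$: the bound $\lim_k|\hat{x}_j^{n_k}w_j^{n_k-1}|\le 1$, i.e.\ \eqref{eq:lassominzero}, is precisely what has to be proved and is not an \enquote{a priori control} available along an arbitrary convergent subsequence. In the paper one must first select, via Lemma~\ref{lem:lowerboundeps}, the subsequence along which the $\varepsilon$-update is active (equality \eqref{eq:lowerboundepsequality}); this produces the lower bound $(\varepsilon^{n_l+1})^2\ge C\,((w_j^{n_l})^{-1})^{2\tau\phi}\,|(w_j^{n_l-1})^{-1}-(w_j^{n_l})^{-1}|^{4\phi}$, and a contradiction argument exploiting $\phi<\frac{1}{4-\tau}$ (so that $\frac{2-2\phi}{4\phi}>1$) then excludes $\lim_k|\hat{x}_j^{n_k}w_j^{n_k-1}|>1$. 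Moreover, identifying the limits of $\hat{x}_j^{n_k}w_j^{n_k-1}$ and $\tilde{x}_j^{n_k}w_j^{n_k-1}$ requires the first (linear-in-$a_n$) term of \eqref{eq:tol} together with the hypothesis $x^\lambda\neq 0$, which is where that hypothesis enters. Only after all this does the statement extend to arbitrary convergent subsequences, via the monotonicity-based argument of \cite[Lemma 4.6.1]{sergei}. Your proposal correctly flags this step as the crux but outsources it wholesale to \cite{sergei,davo15} and asserts that \enquote{the same argument applies to any convergent subsequence}, thereby skipping both the construction of the special subsequence and the mechanism that makes the zero-component bound work.
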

\Rev{
\begin{remark}
Note that the bound~\eqref{eq:tol} on $\tol_n$ is---in contrast to the one in Theorem~\ref{thm: convergence}---not implicit. Although $\tol_n$ depends on $\varepsilon^{n}$, the latter only depends on $\tilde{x}^{n-1}$, $\varepsilon^{n-1}$, $w^{n-1}$, and $\tilde{x}^{n-2}$, $\varepsilon^{n-2}$, $w^{n-2}$. Since in particular $\varepsilon^n$ does not depend on $\tilde{x}^n$, we are able to exchange the steps 3 and 4 in Algorithm~\ref{alg:IRLSCG}.

As we argued in Remark~\ref{rem:loose_tol}, a possible relaxation of the tolerance bound~\eqref{eq:deftol} is allowed to further boost the convergence, the same applies to the bound~\eqref{eq:tol}. 
\end{remark}} 
\begin{remark}
In the case $0 < \tau < 1$, the theorem includes the possibility that there may exist several converging subsequences with different limits. Potentially only one of these limits may have the nice property that its transformation is a critical point. In the proof of the theorem, which follows further below, an appropriate subsequence is constructed. Actually this construction leads to the following hint, how to practically choose the subsequence: Take a converging subsequence $x_{n_l}$ for which the $n_l$ satisfy equation~\eqref{eq:lowerboundepsequality}.
\end{remark}
\HR{It will be important below that a minimizer $x^\sharp$ of $F_{1,\lambda}(x)$ is characterized by the conditions 
\begin{align}
\label{eq:lassominnonzero}-(\Phi^\HR{*}(y - \Phi x^{\sharp}))_j &= \lambda\sign(x^\sharp_j) \quad \mbox{if } x^{\sharp}_j\neq 0, \\
\label{eq:lassominzero}|(\Phi^\HR{*}(y - \Phi x^{\sharp}))_j| &\leq \lambda \quad \mbox{if } x^{\sharp}_j = 0.
\end{align}
Note that in the (less important) case $x^\lambda = 0$, our theorem does not give a conclusion about $x^\lambda$ being a minimizer of $F_{1,\lambda}(x)$.
}

\begin{remark}
\label{rem:locminramlau}
The result of Theorem~\ref{thm:conv} for $0<\tau<1$ can be partially extended towards local minimizers. For \HR{the sake of completeness} 
we sketch the argument from~\cite{RamlauZarzer12}. Assume that $\breve{x}^{\lambda}$ is a local minimizer. Then there is a neighborhood $U_{\epsilon}(\breve{x}^{\lambda})$ with $\epsilon>0$ such that for all $x'\in U_{\epsilon}(\breve{x}^{\lambda})$: 
$$ \breve{F}_{\upsilon,\lambda}(x') \geq  \breve{F}_{\upsilon,\lambda}(\breve{x}^{\lambda}).$$
By continuity of $\mathcal{N}_{\upsilon / \tau}$ there \HR{exists} an $\hat{\epsilon}>0$ such that the neighborhood $U_{\hat{\epsilon}}(x^{\lambda})\subset \mathcal{N}_{\upsilon / \tau}(U_{\epsilon}(\breve{x}^{\lambda}))$. Thus, for all $x\in U_{\hat{\epsilon}}(x^{\lambda})$, we have $x'=\mathcal{N}^{-1}_{\upsilon / \tau}(x)\in U_{\epsilon}(\breve{x}^{\lambda})$, and obtain
\begin{align*}
F_{\tau,\lambda}(x) & = ||x||_{\ell_{\tau}}^{\tau} + \frac{1}{2 \lambda}||\Phi x-y||_{\ell_2}^{2} 
 = ||\mathcal{N}_{\upsilon / \tau}(x')||_{\ell_{\tau}}^{\tau} + \frac{1}{2 \lambda}||\Phi \mathcal{N}_{\upsilon / \tau}(x')-y||_{\ell_2}^{2} \\
& = ||x'||_{\ell_{\upsilon}}^{\upsilon} + \frac{1}{2 \lambda}||\Phi \mathcal{N}_{\upsilon / \tau}(x')-y||_{\ell_2}^{2} = \breve{F}_{\upsilon,\lambda}(x') \\
&\geq \breve{F}_{\upsilon,\lambda}(\breve{x}^{\lambda}) = ||\breve{x}^{\lambda}||_{\ell_{\upsilon}}^{\upsilon} + \frac{1}{2 \lambda}||\Phi \mathcal{N}_{\upsilon / \tau}(\breve{x}^{\lambda})-y||_{\ell_2}^{2} \\
& = ||x^{\lambda}||_{\ell_{\tau}}^{\tau} + \frac{1}{2 \lambda}||\Phi x^{\lambda}-y||_{\ell_2}^{2}  = F_{\tau,\lambda}(x^{\lambda}) .
\end{align*}
\end{remark}

\HR{For the proof of Theorem~\ref{thm:conv}}, we proceed 
\HR{similarly to} Section~\ref{sec:IRLSltau}, by \HR{first} presenting a sequence of auxiliary lemmas on properties of the functional $J_{\tau,\lambda}$ and the dynamics of Algorithm CG-IRLS-$\lambda$. 

\subsection{Properties of the functional $J_{\tau, \lambda}$}
\begin{lemma} 
\label{lemma:ineqfunctional}
For the functional $J_{\tau, \lambda}$ 
\HR{defined} in \eqref{eq:deffunctional}, and the iterates $\tilde{x}^{n}$, $w^{n}$, and $\varepsilon^n$ produced by Algorithm CG-IRLS-$\lambda$, the following \HR{inequalities hold true}:
\begin{eqnarray}
\label{eq:chain1} J_{\tau, \lambda}(\tilde{x}^{n+1},w^{n+1},\varepsilon^{n+1})& \leq & J_{\tau, \lambda}(\tilde{x}^{n+1},w^{n},\varepsilon^{n+1}) \\
\label{eq:chain2} & \leq & J_{\tau, \lambda}(\tilde{x}^{n+1},w^{n},\varepsilon^{n}) \\
\label{eq:chain3} & \leq & J_{\tau, \lambda}(\tilde{x}^{n},w^{n},\varepsilon^{n}).
\end{eqnarray}
\end{lemma}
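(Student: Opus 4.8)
The plan is to establish the three inequalities \eqref{eq:chain1}--\eqref{eq:chain3} by reading off which variable in the triple $(x,w,\varepsilon)$ is being optimized or decreased at each step of Algorithm CG-IRLS-$\lambda$, exactly in the spirit of Lemma~\ref{lem: J_ineq} from the $\ell_\tau$-minimization case. First I would address \eqref{eq:chain1}: the weight $w^{n+1}$ is defined in step~5 of Algorithm~\ref{alg:IRLSCG} as the \emph{exact} minimizer of $w \mapsto J_{\tau,\lambda}(\tilde{x}^{n+1}, w, \varepsilon^{n+1})$ over $w>0$, so by the very definition of argmin we have $J_{\tau,\lambda}(\tilde{x}^{n+1}, w^{n+1}, \varepsilon^{n+1}) \leq J_{\tau,\lambda}(\tilde{x}^{n+1}, w^{n}, \varepsilon^{n+1})$; nothing else is needed here.

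For \eqref{eq:chain2} I would exploit the monotonicity $\varepsilon^{n+1} \leq \varepsilon^{n}$, which holds because step~4 defines $\varepsilon^{n+1}$ as a minimum that includes $\varepsilon^{n}$ as one of its arguments. Since $\varepsilon$ enters $J_{\tau,\lambda}(x,w,\varepsilon)$ only through the term $\frac{\tau}{2}\sum_j \varepsilon^2 w_j$, which is monotonically increasing in $\varepsilon^2$ for fixed positive weights, decreasing $\varepsilon$ from $\varepsilon^n$ to $\varepsilon^{n+1}$ can only decrease $J_{\tau,\lambda}(\tilde{x}^{n+1}, w^{n}, \cdot)$; this gives $J_{\tau,\lambda}(\tilde{x}^{n+1}, w^{n}, \varepsilon^{n+1}) \leq J_{\tau,\lambda}(\tilde{x}^{n+1}, w^{n}, \varepsilon^{n})$.

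The inequality \eqref{eq:chain3} is the one I expect to be the main obstacle, because it is the only place where the \emph{inexactness} of the CG solve enters and where a naive argument would fail. In the exact IRLS-$\lambda$ (Algorithm~\ref{alg:IRLS}) one would simply say $\hat{x}^{n+1}$ minimizes $J_{\tau,\lambda}(\cdot, w^n, \varepsilon^n)$ and hence $J_{\tau,\lambda}(\hat{x}^{n+1}, w^n, \varepsilon^n) \leq J_{\tau,\lambda}(\tilde{x}^{n}, w^n, \varepsilon^n)$, but here we only have the approximate iterate $\tilde{x}^{n+1}$ with $\|\tilde{x}^{n+1} - \hat{x}^{n+1}\|_{\ell_2(w^n)} \leq \tol_{n+1}$, so $\tilde{x}^{n+1}$ need not decrease the functional at all in general. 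The key observation that saves the statement is that CG is applied to the linear system \eqref{eq:ls}, which is precisely the stationarity equation of the \emph{quadratic} functional $x \mapsto J_{\tau,\lambda}(x, w^n, \varepsilon^n)$ (up to the constant $\varepsilon$- and $w$-only terms), and CG is a descent method for this quadratic: as recalled after Algorithm~\ref{CG}, the CG iterates satisfy $F(x^{i+1}) \leq F(x^{i})$ for $F(x) = \tfrac12 x^* A_n x - x^* \Phi^* y$, and the starting vector for the present CG run is $\tilde{x}^{n}$ (step~3 of Algorithm~\ref{alg:IRLSCG}). Therefore $J_{\tau,\lambda}(\tilde{x}^{n+1}, w^n, \varepsilon^n) \leq J_{\tau,\lambda}(\tilde{x}^{n}, w^n, \varepsilon^n)$ because the difference of these two $J$-values equals the difference of the corresponding $F$-values (the $\varepsilon$- and $w$-dependent additive terms are identical for both arguments, since $w^n$ and $\varepsilon^n$ are fixed), and CG started at $\tilde{x}^n$ never increases $F$. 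I would make this precise by writing $J_{\tau,\lambda}(x, w^n, \varepsilon^n) = \tau F(x) + \frac{1}{2\lambda}\|y\|_{\ell_2}^2 + \frac{\tau}{2}\sum_j\big(\varepsilon^{n\,2} w_j^n + \frac{2-\tau}{\tau}(w_j^n)^{-\frac{\tau}{2-\tau}}\big)$ up to the appropriate scalar bookkeeping, noting the constant (in $x$) part cancels in the comparison, and then invoking the monotone-descent property of CG on the quadratic $F$. Chaining \eqref{eq:chain1}, \eqref{eq:chain2}, \eqref{eq:chain3} then yields the full statement.
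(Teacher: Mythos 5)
Your proposal is correct and follows essentially the same route as the paper: the first inequality from the minimization property of $w^{n+1}$, the second from $\varepsilon^{n+1}\leq\varepsilon^{n}$, and the third from the fact that CG is a descent method for the quadratic associated with \eqref{eq:ls} and is initialized at $\tilde{x}^{n}$, so that $J_{\tau,\lambda}(\cdot,w^{n},\varepsilon^{n})$, which differs from a positive multiple of that quadratic only by terms constant in $x$, cannot increase. Your only deviation is the harmless scalar in the identification of $J$ with the CG objective (the correct factor is $1/\lambda$ rather than $\tau$), which you already flagged as bookkeeping.
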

\begin{proof}The first inequality holds because $w^{n+1}$ is the minimizer and the second inequality holds since $\varepsilon^{n+1} \leq \varepsilon^{n}$. In the third inequality we use the fact that the CG-method is a descent method, decreasing the functional in each iteration. Since we take $\tilde{x}^{n}$ as the initial estimate in the first iteration of CG, the output $\tilde{x}^{n+1}$ of CG must have a value of the functional that is less or equal to the one of the initial estimate.
\end{proof}

The iterative application of Lemma~\ref{lemma:ineqfunctional} leads to the fact that for each $n\in \N^+$ the functional $J_{\tau, \lambda}$ is bounded:
\begin{equation}
\label{eq:boundedness}
0 \leq J_{\tau, \lambda}(\tilde{x}^{n},w^{n},\varepsilon^{n})\leq J_{\tau, \lambda}(\tilde{x}^{1},w^{0},\varepsilon^{0})= \bar{J}.
\end{equation}
Since the functional is composed of positive summands, \HR{its definition and \eqref{eq:boundedness} imply} 
\begin{align}
 \nonumber||\Phi\tilde{x}^n -y ||_{\ell_2} &\leq \sqrt{2\lambda\bar{J}}, \\
 \label{eq:restriction} ||\tilde{x}^{n}||_{\ell_2(w^n)}=\sqrt{\sum\limits_{j=1}^N \left(\tilde{x}_j^n\right)^2 w_j^n} &\leq \sqrt{\frac{2\bar{J}}{\tau}}, \quad \text{ and} \\
\nonumber w_j^n& \geq \left(\frac{2-\tau}{\tau\bar{J}}\right)^{\frac{2-\tau}{\tau}}, \quad j = 1,\ldots,N.
\end{align}
\HR{The} last inequality leads to a general relationship between the $\ell_2$-norm and $\ell_2(w^n)$-norm for arbitrary $x\in\R^N$:
\begin{equation}
\label{eq:normw2}
||x||_{\ell_2(w^n)} \geq \sqrt{\left(\frac{2-\tau}{\tau\bar{J}}\right)^{\frac{2-\tau}{\tau}}}||x||_{\ell_2}.
\end{equation}

\HR{In order to} show convergence to a critical point or minimizer of the functional $F_{\tau,\lambda}$, we will use \HR{the first order condition}~\eqref{eq:lspre}. Since this property is only valid for the exact solution $\hat{x}^{n+1}$, we need a connection between $\hat{x}^{n+1}$ and $\tilde{x}^{n+1}$. Observe that
\begin{equation}
\label{eq:funcexact}
J_{\tau, \lambda}(\hat{x}^{n+1},w^{n},\varepsilon^{n})\leq J_{\tau, \lambda}(\tilde{x}^{n+1},w^{n},\varepsilon^{n}) 
\end{equation}
since $\hat{x}^{n+1}$ is the exact minimizer. From \eqref{eq:funcexact} we obtain
\begin{equation*}
 \frac{\tau}{2}\sum\limits_{j=1}^{N}\left(\hat{x}_j^{n+1}\right)^2 w_j^n + \frac{1}{2\lambda} ||\Phi\hat{x}^{n+1}-y||_{\ell_2}^2  \leq \frac{\tau}{2}\sum\limits_{j=1}^{N}\left(\tilde{x}_j^{n+1}\right)^2 w_j^n + \frac{1}{2\lambda} ||\Phi\tilde{x}^{n+1}-y||_{\ell_2}^2
\end{equation*}
which leads to
\begin{equation}
\label{eq:wnormexactapprox}
\frac{\tau}{2}||\hat{x}^{n+1}||_{\ell_2(w^n)}^2 \leq \frac{\tau}{2}||\tilde{x}^{n+1}||_{\ell_2(w^n)}^2 + \frac{1}{2\lambda} \left(||\Phi\tilde{x}^{n+1}-y||_{\ell_2}^2 -  ||\Phi\hat{x}^{n+1}-y||_{\ell_2}^2\right).
\end{equation}
Since~\eqref{eq:funcexact} holds in addition to~\eqref{eq:chain3} and~\eqref{eq:boundedness}, we conclude, also for the exact solution $\hat{x}^{n+1}$, the bound
\begin{equation}
\label{eq:restrictionexact}
||\Phi\hat{x}^n -y ||_{\ell_2}\leq \sqrt{2\lambda J_{\tau,\lambda}(\hat{x}^n, w^{n-1},\varepsilon^{n-1})} \leq \sqrt{2\lambda\bar{J}},
\end{equation}
for all $n\in\HR{\N}$, and
\begin{equation}
\label{eq:roughbound}
||\hat{x}^{n+1}||_{\ell_2(w^n)} \leq \sqrt{\frac{2 J_{\tau,\lambda}(\hat{x}^{n+1}, w^{n},\varepsilon^{n})}{\tau}} \leq \sqrt{\frac{2\bar{J}}{\tau}}.
\end{equation}
\HR{Additionally using}~\eqref{eq:restrictionexact}, we are able \HR{to 
estimate} the second summand of~\eqref{eq:wnormexactapprox} by
\begin{align}
\label{eq:est2ndsum}
\begin{aligned}
 &\left(||\Phi\tilde{x}^{n+1}-y||_{\ell_2}^2 \right. -  \left.||\Phi\hat{x}^{n+1}-y||_{\ell_2}^2\right)  \leq \left|  \left(||\Phi\tilde{x}^{n+1}-y||_{\ell_2}^2 -  ||\Phi\hat{x}^{n+1}-y||_{\ell_2}^2\right) \right| \\
 =&  \left| ||\Phi\tilde{x}^{n+1}-\Phi\hat{x}^{n+1}||_{\ell_2}^2 + 2 \left\langle \Phi\tilde{x}^{n+1}-\Phi\hat{x}^{n+1},\Phi\hat{x}^{n+1} - y \right\rangle_{\ell_2}\right| \\
  \leq & ||\Phi\tilde{x}^{n+1}-\Phi\hat{x}^{n+1}||_{\ell_2} \left( ||\Phi\tilde{x}^{n+1}-\Phi\hat{x}^{n+1}||_{\ell_2} + 2 ||\Phi\hat{x}^{n+1} - y ||_{\ell_2}\right) \\
  \leq & ||\Phi\tilde{x}^{n+1}-\Phi\hat{x}^{n+1}||_{\ell_2} \left( ||\Phi\tilde{x}^{n+1}- y||_{\ell_2} + 3 ||\Phi\hat{x}^{n+1} - y ||_{\ell_2}\right)  \leq 4\sqrt{2\lambda\bar{J}} ||\Phi||\, ||\tilde{x}^{n+1}-\hat{x}^{n+1}||_{\ell_2},
 \end{aligned}
\end{align}
where we used the Cauchy-Schwarz inequality in the second inequality, the triangle inequality in the third inequality, and the bounds in \eqref{eq:restriction} and \eqref{eq:restrictionexact} in the last inequality. 

The following pivotal result of this section allows us to control the difference between the exact and approximate solution of the linear system in line 3 of Algorithm CG-IRLS-$\lambda$.
\begin{lemma}
\label{lemma:propertiesfunctional}
For a given positive number $a_{n+1}$ and a choice of the accuracy $\tol_{n+1}$ \HR{satisfying}~\eqref{eq:tol}, the functional $J_{\tau, \lambda}$ fulfills the two monotonicity properties
\begin{equation}
\label{eq:an1}
J_{\tau, \lambda}(\hat{x}^{n+1},w^{n+1},\varepsilon^{n+1}) - J_{\tau, \lambda}(\tilde{x}^{n+1},w^{n+1},\varepsilon^{n+1}) \leq a_{n+1}
\end{equation}
and
\begin{equation}
\label{eq:an2}
J_{\tau, \lambda}(\tilde{x}^{n+1},w^{n},\varepsilon^{n}) - J_{\tau, \lambda}(\hat{x}^{n+1},w^{n},\varepsilon^{n}) \leq a_{n+1}.
\end{equation}
\end{lemma}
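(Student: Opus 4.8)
The plan is to treat the two inequalities \eqref{eq:an1} and \eqref{eq:an2} separately, following the pattern of the proof of Lemma~\ref{lem: toleranz} in the unconstrained setting; the one genuinely new ingredient compared to that lemma is the data-fidelity term $\frac{1}{2\lambda}\|\Phi x - y\|_{\ell_2}^2$, whose contribution to each functional difference must be controlled with the help of~\eqref{eq:est2ndsum}, \eqref{eq:normw2}, \eqref{eq:restriction}, and \eqref{eq:restrictionexact}.

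For \eqref{eq:an2} I would exploit that $\hat{x}^{n+1}$ is the \emph{exact} minimizer of the strictly convex quadratic $x \mapsto J_{\tau,\lambda}(x,w^n,\varepsilon^n)$, whose Hessian equals $\frac1\lambda A_n$ with $A_n = \Phi^*\Phi + \diag[\lambda\tau w^n_j]_{j=1}^N$. Since the $w^n$- and $\varepsilon^n$-dependent constant terms cancel in the difference, the exact second-order expansion at the minimizer gives
\[
J_{\tau,\lambda}(\tilde{x}^{n+1},w^n,\varepsilon^n) - J_{\tau,\lambda}(\hat{x}^{n+1},w^n,\varepsilon^n) = \tfrac{1}{2\lambda}\|\Phi(\tilde{x}^{n+1}-\hat{x}^{n+1})\|_{\ell_2}^2 + \tfrac{\tau}{2}\|\tilde{x}^{n+1}-\hat{x}^{n+1}\|_{\ell_2(w^n)}^2 ,
\]
which is manifestly nonnegative. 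I would then bound $\|\Phi(\tilde{x}^{n+1}-\hat{x}^{n+1})\|_{\ell_2} \le \|\Phi\|\,\|\tilde{x}^{n+1}-\hat{x}^{n+1}\|_{\ell_2}$, pass from $\ell_2$ to $\ell_2(w^n)$ via \eqref{eq:normw2}, use the stopping accuracy $\|\tilde{x}^{n+1}-\hat{x}^{n+1}\|_{\ell_2(w^n)} \le \tol_{n+1}$, so that the right-hand side is at most $\big(\tfrac{\tau}{2} + \tfrac{\|\Phi\|^2}{2\lambda}(\tfrac{2-\tau}{\tau\bar J})^{-\frac{2-\tau}{\tau}}\big)\tol_{n+1}^2$, and finally invoke the \emph{second} branch of \eqref{eq:tol} (with the index shifted to $n+1$) to conclude that this is $\le a_{n+1}$.

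For \eqref{eq:an1} the weights have changed to $w^{n+1},\varepsilon^{n+1}$, so $\hat{x}^{n+1}$ is no longer a minimizer and the difference need not be signed; I would bound its absolute value. Writing it as $\frac{\tau}{2}\sum_j(|\hat{x}^{n+1}_j|^2 - |\tilde{x}^{n+1}_j|^2)w^{n+1}_j + \frac{1}{2\lambda}(\|\Phi\hat{x}^{n+1}-y\|_{\ell_2}^2 - \|\Phi\tilde{x}^{n+1}-y\|_{\ell_2}^2)$, I would factor $|\hat{x}_j|^2 - |\tilde{x}_j|^2 = (|\hat{x}_j| - |\tilde{x}_j|)(|\hat{x}_j| + |\tilde{x}_j|)$ and apply Cauchy--Schwarz in $\ell_2(w^{n+1})$. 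The passage from the $w^{n+1}$-norm back to the $w^n$-norm is governed by the pointwise ratio
\[
\max_{1\le\ell\le N}\frac{w^{n+1}_\ell}{w^n_\ell} = \max_{1\le\ell\le N}\left(\frac{|\tilde{x}^n_\ell|^2 + (\varepsilon^n)^2}{|\tilde{x}^{n+1}_\ell|^2 + (\varepsilon^{n+1})^2}\right)^{\frac{2-\tau}{2}} \le \left(\frac{\max_j(\tilde{x}^n_j)^2 + (\varepsilon^n)^2}{(\varepsilon^{n+1})^2}\right)^{\frac{2-\tau}{2}} = C_{w^n},
\]
which is exactly the constant appearing in \eqref{eq:tolcwn} (indexed at $n+1$). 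Combining this with $\|\hat{x}^{n+1}-\tilde{x}^{n+1}\|_{\ell_2(w^n)}\le\tol_{n+1}$, the bound $\|\tilde{x}^{n+1}\|_{\ell_2(w^n)}\le\sqrt{2\bar J/\tau}$ from \eqref{eq:restriction}, and $\|\hat{x}^{n+1}\|_{\ell_2(w^n)}\le\sqrt{2\bar J/\tau}$ from \eqref{eq:roughbound}, the first summand is at most $\sqrt{2\bar J\tau}\,C_{w^n}\,\tol_{n+1}$. For the data term I would recycle the manipulation behind \eqref{eq:est2ndsum} (absorbing the quadratic part into a linear one via the triangle inequality) and then use \eqref{eq:normw2} to get a bound $2\sqrt{2\bar J/\lambda}\,\|\Phi\|\,\sqrt{(\tfrac{2-\tau}{\tau\bar J})^{-\frac{2-\tau}{\tau}}}\,\tol_{n+1}$. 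Adding the two contributions and applying the \emph{first} branch of \eqref{eq:tol} to $\tol_{n+1}$ yields the bound $a_{n+1}$.

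I expect the main obstacle to be the bookkeeping in the proof of \eqref{eq:an1}: tracking in which weighted $\ell_2$-norm each quantity naturally lives, moving cleanly between the $w^{n+1}$-, $w^n$-, and unweighted $\ell_2$-norms through the constants $C_{w^n}$ and $(\tfrac{2-\tau}{\tau\bar J})^{-\frac{2-\tau}{\tau}}$, and checking that the resulting coefficient of $\tol_{n+1}$ matches \emph{exactly} the somewhat unwieldy denominator in the first branch of \eqref{eq:tol} — this is also the point where the role of the $\varepsilon$-update in keeping $C_{w^n}$ finite is essential. Inequality \eqref{eq:an2} is comparatively straightforward once the exact quadratic expansion at the minimizer is written down.
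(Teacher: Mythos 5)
Your proposal is correct and follows essentially the same route as the paper: for \eqref{eq:an2} the exact quadratic expansion at the minimizer (equivalently, inserting the first-order condition \eqref{eq:lspre}) gives $\frac{\tau}{2}\|\tilde{x}^{n+1}-\hat{x}^{n+1}\|_{\ell_2(w^n)}^2+\frac{1}{2\lambda}\|\Phi(\tilde{x}^{n+1}-\hat{x}^{n+1})\|_{\ell_2}^2$, bounded via \eqref{eq:normw2} and the second branch of \eqref{eq:tol}; for \eqref{eq:an1} the factorization plus Cauchy--Schwarz, the weight-ratio constant $C_{w^n}$, the bounds \eqref{eq:restriction}, \eqref{eq:roughbound}, \eqref{eq:est2ndsum}, \eqref{eq:normw2}, and the first branch of \eqref{eq:tol} are exactly the paper's steps, with matching constants.
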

\begin{proof}
By means of the relation
\begin{equation*}
w_j^{n+1} = w_j^n \frac{w_j^{n+1}}{w_j^n} \leq w_j^n	\left(\frac{(\tilde{x}_j^n)^2 +(\varepsilon^n)^2}{(\tilde{x}_j^{n+1})^2 +(\varepsilon^{n+1})^2}\right)^{1-\frac{\tau}{2}}\leq w_j^n \left(\frac{\max\limits_{j}(\tilde{x}_j^n)^2 +(\varepsilon^n)^2}{(\varepsilon^{n+1})^2}\right)^{1-\frac{\tau}{2}} \HR{= w_j^n C_{w^n}},
\end{equation*}
\HR{where $C_{w^n}$ was defined in \eqref{eq:tolcwn}}, we can estimate
\begin{align}
\nonumber &J_{\tau, \lambda}(\hat{x}^{n+1},w^{n+1},\varepsilon^{n+1}) - J_{\tau, \lambda}(\tilde{x}^{n+1},w^{n+1},\varepsilon^{n+1}) \\
\nonumber & \leq \frac{\tau}{2} \sum\limits_{j=1}^{N} \left(\hat{x}_j^{n+1} - \tilde{x}_j^{n+1}\right)\left(\hat{x}_j^{n+1} + \tilde{x}_j^{n+1}\right)w_j^{n+1} + \left|\frac{1}{2\lambda}||\Phi\hat{x}^{n+1} - y||_{\ell_2}^2 - ||\Phi\tilde{x}^{n+1} - y||_{\ell_2}^2\right|\\
\nonumber & \leq \frac{\tau}{2} \left|\left\langle\hat{x}^{n+1} - \tilde{x}^{n+1},\hat{x}^{n+1} + \tilde{x}^{n+1}\right\rangle_{\ell_2(w^{n+1})}\right| + \frac{4\sqrt{2\lambda\bar{J}}}{2\lambda} ||\Phi||||\tilde{x}^{n+1}-\hat{x}^{n+1}||_{\ell_2}\\
\nonumber & \leq \frac{\tau}{2} \sqrt{\sum\limits_{j=1}^N(\hat{x}_j^{n+1} - \tilde{x}_j^{n+1})^2w_j^{n+1}} \sqrt{\sum\limits_{j=1}^N(\hat{x}_j^{n+1} + \tilde{x}_j^{n+1})^2w_j^{n+1}} + \frac{4\sqrt{2\lambda\bar{J}}}{2\lambda} ||\Phi|| ||\tilde{x}^{n+1}-\hat{x}^{n+1}||_{\ell_2} \\
\nonumber & \HR{ \leq } \frac{\tau}{2} C_{w^n}||\hat{x}^{n+1} - \tilde{x}^{n+1}||_{\ell_2(w^{n})}||\hat{x}^{n+1} + \tilde{x}^{n+1}||_{\ell_2(w^{n})} + \frac{4\sqrt{2\lambda\bar{J}}}{2\lambda} ||\Phi|| ||\tilde{x}^{n+1}-\hat{x}^{n+1}||_{\ell_2} \\
\nonumber & \leq  C_{w^n}||\hat{x}^{n+1} - \tilde{x}^{n+1}||_{\ell_2(w^{n})}2\max\left\{\frac{\tau}{2}||\hat{x}^{n+1}||_{\ell_2(w^{n})}, \frac{\tau}{2}||\tilde{x}^{n+1}||_{\ell_2(w^{n})}\right\} + \frac{4\sqrt{2\lambda\bar{J}}}{2\lambda} ||\Phi|| ||\tilde{x}^{n+1}-\hat{x}^{n+1}||_{\ell_2} \\
\nonumber & \leq \sqrt{2\bar{J}\tau} C_{w^n}||\hat{x}^{n+1} - \tilde{x}^{n+1}||_{\ell_2(w^{n})}+ \frac{4\sqrt{2\lambda\bar{J}}}{2\lambda} \sqrt{\left(\frac{2-\tau}{\tau\bar{J}}\right)^{-\frac{2-\tau}{\tau}}} ||\Phi|| ||\tilde{x}^{n+1}-\hat{x}^{n+1}||_{\ell_2(w^n)} \\
\nonumber & \leq  \left(\sqrt{2\bar{J}\tau} C_{w^n}+ \frac{4\sqrt{2\lambda\bar{J}}}{2\lambda} \sqrt{\left(\frac{2-\tau}{\tau\bar{J}}\right)^{-\frac{2-\tau}{\tau}}} ||\Phi|| \right)||\tilde{x}^{n+1}-\hat{x}^{n+1}||_{\ell_2(w^n)} \leq a_{n+1},
\end{align}

where we used \eqref{eq:est2ndsum} in the second inequality, Cauchy-Schwarz in the third inequality, and~\eqref{eq:normw2},~\eqref{eq:restriction}, and~\eqref{eq:roughbound} in the \HR{sixth} inequality. Thus we obtain~\eqref{eq:an1}. To show~\eqref{eq:an2}, we use \eqref{eq:normw2} in the \HR{second to last} inequality, \HR{condition~\eqref{eq:tol} in the last inequality} and the fact that $\hat{x}^{n+1}=\underset{x}{\argmin} J_{\tau,\lambda}(x, w^n, \varepsilon^n)$ (and thus fulfilling~\eqref{eq:lspre}) in the second identity below:
\begingroup
\begin{align}
\label{eq:xtilde}
 &J_{\tau, \lambda}(\tilde{x}^{n+1},w^{n},\varepsilon^{n}) - J_{\tau, \lambda}(\hat{x}^{n+1},w^{n},\varepsilon^{n}) \\
&= \frac{\tau}{2} \sum\limits_{j=1}^{N} \left((\tilde{x}_j^{n+1})^2 - (\hat{x}_j^{n+1})^2\right)w_j^n + \frac{1}{2\lambda}\left( ||\Phi\tilde{x}^{n+1} -\Phi\hat{x}^{n+1}||_{\ell_2}^2+ 2 \left\langle \Phi(\tilde{x}^{n+1}-\hat{x}^{n+1}),\Phi\hat{x}^{n+1} - y \right\rangle_{\ell_2}\right)\\
  & = \frac{\tau}{2} \sum\limits_{j=1}^{N} \left((\tilde{x}_j^{n+1})^2 - (\hat{x}_j^{n+1})^2 - 2 \hat{x}_j^{n+1}\tilde{x}_j^{n+1} + 2 \left(\hat{x}_j^{n+1}\right)^2\right)w_j^n + \frac{1}{2\lambda} ||\Phi\tilde{x}^{n+1} -\Phi\hat{x}^{n+1}||_{\ell_2}^2 \\
  & \leq \frac{\tau}{2} \sum\limits_{j=1}^{N} \left((\tilde{x}_j^{n+1})^2 + (\hat{x}_j^{n+1})^2 - 2 \hat{x}_j^{n+1}\tilde{x}_j^{n+1} \right)w_j^n + \frac{1}{2\lambda} ||\Phi|| ||\tilde{x}^{n+1} -\hat{x}^{n+1}||_{\ell_2}^2 \\
 & \leq \left(\frac{\tau}{2} +\frac{||\Phi||^2}{2\lambda}\left(\frac{2-\tau}{\tau\bar{J}}\right)^{-\frac{2-\tau}{\tau}}\right)||\tilde{x}^{n+1} - \hat{x}^{n+1}||_{\ell_2(w^n)}^2 \leq a_{n+1}.
\end{align}
\endgroup
\end{proof}

\HR{Besides} Lemma~\ref{lemma:propertiesfunctional} there are two more helpful properties of the functional. First, the identity
\begin{equation*}
J_{\tau, \lambda}(\hat{x}^{n},w^{n},\varepsilon^{n}) - J_{\tau, \lambda}(\hat{x}^{n+1},w^{n},\varepsilon^{n})  = \frac{\tau}{2} ||\hat{x}^n-\hat{x}^{n+1}||_{\ell_2(w^n)}^2 + \frac{1}{2\lambda} ||\Phi\hat{x}^{n} -\Phi\hat{x}^{n+1}||_{\ell_2}^2
\end{equation*}
can be shown by the same calculation as in \eqref{eq:xtilde}, by means of replacing $\tilde{x}^{n+1}$ by $\hat{x}^n$.
Second, it follows in particular that 
\begin{eqnarray}
\nonumber\frac{\tau}{2}\sqrt{\left(\frac{2-\tau}{\tau\bar{J}}\right)^{\frac{2-\tau}{\tau}}}||\hat{x}^{n+1}-\hat{x}^n||_{\ell_2}^2 &\leq& \frac{\tau}{2} ||\hat{x}^{n+1}-\hat{x}^n||_{\ell_2(w^n)}^2  \\
\label{eq:estxhat} &\leq& J_{\tau, \lambda}(\hat{x}^{n},w^{n},\varepsilon^{n}) - J_{\tau, \lambda}(\hat{x}^{n+1},w^{n},\varepsilon^{n}).
\end{eqnarray} 
where the estimate \eqref{eq:normw2} is used in the first inequality.\\

\subsection{Proof of convergence}
We need to show that the difference \HR{$\hat{x}^{n+1} - \hat{x}^n$} between two successive {\it exact} iterates and the one between the exact and approximated iterates,
\HR{$\hat{x}^n - \tilde{x}^n$, become} arbitrarily small. This result is used in the proof of Theorem \ref{thm:conv} to show that \HR{both $(\hat{x}^n)_{n\in\N}$ and $(\tilde{x}^n)_{n\in\N}$} 
converge to the same limit.
\begin{lemma}
\label{lemma:asymptotic}
Consider a summable sequence $(a_{n})_{n\in\N}$ and choose the accuracy of the CG solution $\tol_{n}$ 
\HR{satisfying}~\eqref{eq:tol} for the $n$-th iteration step. Then the sequences $(\hat{x}^n)_{n\in\N}$ and $(\tilde{x}^n)_{n\in\N}$ have the properties
\begin{equation}
\label{eq:asymptoticregularity}
\lim\limits_{n\rightarrow\infty}||\hat{x}^n - \hat{x}^{n+1}||_{\ell_2} = 0
\end{equation}
and
\begin{equation}
\label{eq:asymptotic}
\lim\limits_{n\rightarrow\infty}||\tilde{x}^{n+1} - \hat{x}^{n+1}||_{\ell_2} = 0.
\end{equation}
\end{lemma}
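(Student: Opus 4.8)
The plan is to transcribe the argument of Lemma~\ref{lem: finite} and Lemma~\ref{lem: delta_xtil} from the $\ell_\tau$-minimization case to the penalized functional $J_{\tau,\lambda}$, using the monotonicity estimates already established in Lemma~\ref{lemma:ineqfunctional} and Lemma~\ref{lemma:propertiesfunctional} together with the identity underlying \eqref{eq:estxhat}. The two statements \eqref{eq:asymptoticregularity} and \eqref{eq:asymptotic} are handled by two essentially independent sub-arguments, the first via a telescoping (summability) estimate, the second via a one-step bound.

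\textbf{Step 1: proof of \eqref{eq:asymptotic}.} I would first record that, as in the computation leading to \eqref{eq:xtilde}, inserting the first order condition \eqref{eq:lspre} for $\hat{x}^{n+1}$ to rewrite the cross term $\langle \Phi(\tilde{x}^{n+1}-\hat{x}^{n+1}),\Phi\hat{x}^{n+1}-y\rangle_{\ltwo}$ yields the exact identity $J_{\tau,\lambda}(\tilde{x}^{n+1},w^n,\varepsilon^n)-J_{\tau,\lambda}(\hat{x}^{n+1},w^n,\varepsilon^n)=\tfrac{\tau}{2}\vectornorm{\tilde{x}^{n+1}-\hat{x}^{n+1}}_{\ltwo(w^n)}^2+\tfrac{1}{2\lambda}\vectornorm{\Phi(\tilde{x}^{n+1}-\hat{x}^{n+1})}_{\ltwo}^2$. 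Combining this with \eqref{eq:an2} of Lemma~\ref{lemma:propertiesfunctional} gives $\tfrac{\tau}{2}\vectornorm{\tilde{x}^{n+1}-\hat{x}^{n+1}}_{\ltwo(w^n)}^2\le a_{n+1}$, and then the norm comparison \eqref{eq:normw2} gives $\vectornorm{\tilde{x}^{n+1}-\hat{x}^{n+1}}_{\ltwo}^2\le \tfrac{2}{\tau}\,(\tfrac{2-\tau}{\tau\bar J})^{-\frac{2-\tau}{\tau}}\,a_{n+1}$. Since $(a_n)_{n\in\N}$ is summable, $a_{n+1}\to 0$, which proves \eqref{eq:asymptotic}.

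\textbf{Step 2: proof of \eqref{eq:asymptoticregularity}.} Here I would run the telescoping argument. The lower bound \eqref{eq:estxhat} reads $\tfrac{\tau}{2}\sqrt{(\tfrac{2-\tau}{\tau\bar J})^{\frac{2-\tau}{\tau}}}\,\vectornorm{\hat{x}^{n+1}-\hat{x}^n}_{\ltwo}^2\le J_{\tau,\lambda}(\hat{x}^n,w^n,\varepsilon^n)-J_{\tau,\lambda}(\hat{x}^{n+1},w^n,\varepsilon^n)$. Next I would derive the analogue of \eqref{eq: J_xopt_delta}, namely $J_{\tau,\lambda}(\hat{x}^{n+1},w^{n+1},\varepsilon^{n+1})\le J_{\tau,\lambda}(\hat{x}^{n+1},w^n,\varepsilon^n)+2a_{n+1}$, by chaining \eqref{eq:an1}, the monotonicity chain \eqref{eq:chain1}--\eqref{eq:chain3} evaluated at $\tilde{x}^{n+1}$, the minimality \eqref{eq:funcexact}, and \eqref{eq:an2}. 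Substituting the resulting inequality $J_{\tau,\lambda}(\hat{x}^{n+1},w^n,\varepsilon^n)\ge J_{\tau,\lambda}(\hat{x}^{n+1},w^{n+1},\varepsilon^{n+1})-2a_{n+1}$ into the previous display makes its right-hand side telescope; summing over $n=1,\dots,N$, using $J_{\tau,\lambda}\ge 0$, the summability of $(a_n)$, and the bound $J_{\tau,\lambda}(\hat{x}^1,w^1,\varepsilon^1)\le \bar J+2a_1$ (the same analogue of \eqref{eq: J_xopt_delta} at $n=0$ together with \eqref{eq:funcexact} and the definition of $\bar J$), I obtain $\sum_{n\ge 1}\vectornorm{\hat{x}^{n+1}-\hat{x}^n}_{\ltwo}^2<\infty$, hence \eqref{eq:asymptoticregularity}.

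The heavy Cauchy--Schwarz and norm-comparison work is already packaged inside Lemmas~\ref{lemma:ineqfunctional} and~\ref{lemma:propertiesfunctional}, so I do not anticipate a genuine obstacle. The only point needing care is the index bookkeeping in Step~2: every monotonicity inequality must be applied at exactly the right pair $(n,n+1)$ so that the telescoping sum collapses, and one must note that the constant $(\tfrac{2-\tau}{\tau\bar J})^{\frac{2-\tau}{\tau}}$ is strictly positive (which holds because $0<\tau\le 1$ and $\bar J>0$), so that the final bound is finite and independent of $N$.
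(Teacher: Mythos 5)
Your proposal is correct and follows essentially the same route as the paper: \eqref{eq:asymptotic} is obtained from the exact identity in lines 1 and 3 of \eqref{eq:xtilde} (via \eqref{eq:lspre}) combined with \eqref{eq:an2} and \eqref{eq:normw2}, and \eqref{eq:asymptoticregularity} from the telescoping of \eqref{eq:estxhat} using \eqref{eq:an1}, \eqref{eq:an2}, and \eqref{eq:chain1}--\eqref{eq:chain2}. The only (harmless) difference is bookkeeping of the initial term, where your bound $J_{\tau,\lambda}(\hat{x}^1,w^1,\varepsilon^1)\le \bar J+2a_1$ is in fact slightly more careful than the paper's display.
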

\begin{proof}
We use the properties of $J$, which we derived in the previous subsection. First, we show \eqref{eq:asymptoticregularity}:
\begingroup
\allowdisplaybreaks
\begin{align*}
\frac{\tau}{2}\sqrt{\left(\frac{2-\tau}{\tau\bar{J}}\right)^{\frac{2-\tau}{\tau}}}\sum\limits_{n = 1}^{M}||\hat{x}^{n+1}-\hat{x}^n||_{\ell_2}^2 &\leq \sum\limits_{n = 1}^{M}J_{\tau, \lambda}(\hat{x}^{n},w^{n},\varepsilon^{n}) - J_{\tau, \lambda}(\hat{x}^{n+1},w^{n},\varepsilon^{n}) \\
&\leq \sum\limits_{n = 1}^{M}J_{\tau, \lambda}(\hat{x}^{n},w^{n},\varepsilon^{n}) - J_{\tau, \lambda}(\tilde{x}^{n+1},w^{n},\varepsilon^{n}) + a_{n+1} \\
&\leq \sum\limits_{n = 1}^{M}J_{\tau, \lambda}(\hat{x}^{n},w^{n},\varepsilon^{n}) - J_{\tau, \lambda}(\tilde{x}^{n+1},w^{n+1},\varepsilon^{n+1}) + a_{n+1} \\
&\leq \sum\limits_{n = 1}^{M}J_{\tau, \lambda}(\hat{x}^{n},w^{n},\varepsilon^{n}) - J_{\tau, \lambda}(\hat{x}^{n+1},w^{n+1},\varepsilon^{n+1}) + 2a_{n+1} \\
&= J_{\tau, \lambda}(\hat{x}^{1},w^{1},\varepsilon^{1}) - J_{\tau, \lambda}(\tilde{x}^{M+1},w^{M+1},\varepsilon^{M+1}) + 2 \sum\limits_{n = 1}^{M}a_{n+1} \\
&\leq \bar{J} + 2\sum\limits_{n = 1}^{M}a_{n+1}.
\end{align*}
\endgroup
We used~\eqref{eq:estxhat} in the first inequality, ~\eqref{eq:an2} in the second inequality,~\eqref{eq:chain1} and~\eqref{eq:chain2} in the third inequality,~\eqref{eq:an1} in the fourth inequality and a telescoping sum in the identity. Letting $M\rightarrow\infty$ we obtain
\begin{equation*}
\frac{\tau}{2}\left(\frac{2-\tau}{\tau\bar{J}}\right)^{\frac{2-\tau}{\tau}}\sum\limits_{n = 1}^{\infty}||\hat{x}^{n+1}-\hat{x}^n||_{\ell_2}^2 \leq \bar{J} + 2\sum\limits_{n = 1}^{\infty}a_{n+1} < \infty
\end{equation*}
and thus \eqref{eq:asymptoticregularity}.

Second, we show \eqref{eq:asymptotic}. From line 1 and 3 of~\eqref{eq:xtilde} we know that
\begin{align*}
J_{\tau, \lambda}(&\tilde{x}^{n+1},w^{n},\varepsilon^{n}) - J_{\tau, \lambda}(\hat{x}^{n+1},w^{n},\varepsilon^{n}) \\
&= \frac{\tau}{2} \sum\limits_{j=1}^{N} \left((\tilde{x}_j^{n+1})^2 - (\hat{x}_j^{n+1})^2 - 2 \hat{x}_j^{n+1}\tilde{x}_j^{n+1} + 2 \left(\hat{x}_j^{n+1}\right)^2\right)w_j^n + \frac{1}{2\lambda} ||\Phi\tilde{x}^{n+1} -\Phi\hat{x}^{n+1}||_{\ell_2}^2 \\
&= \frac{\tau}{2} ||\tilde{x}_j^{n+1} - \hat{x}_j^{n+1}||_{\ell_2(w^n)}^2 +  \frac{1}{2\lambda} ||\Phi\tilde{x}^{n+1} -\Phi\hat{x}^{n+1}||_{\ell_2}^2.
\end{align*}
Since the second summand is positive, we conclude
\begin{align*}
J_{\tau, \lambda}(\tilde{x}^{n+1},w^{n},\varepsilon^{n}) - J_{\tau, \lambda}(\hat{x}^{n+1},w^{n},\varepsilon^{n}) \geq \frac{\tau}{2} ||\tilde{x}_j^{n+1} - \hat{x}_j^{n+1}||_{\ell_2(w^n)}^2.
\end{align*}
Together with~\eqref{eq:an2} we find that
\begin{align}
\nonumber \frac{\tau}{2}\left(\frac{2-\tau}{\tau\bar{J}}\right)^{\frac{2-\tau}{\tau}}||\tilde{x}^{n+1} - \hat{x}^{n+1}||_{\ell_2}^2 &\leq \frac{\tau}{2}||\tilde{x}^{n+1} - \hat{x}^{n+1}||_{\ell_2(w^n)}^2 \\
\nonumber &\leq J_{\tau, \lambda}(\tilde{x}^{n+1},w^{n},\varepsilon^{n}) - J_{\tau, \lambda}(\hat{x}^{n+1},w^{n},\varepsilon^{n}) \leq a_{n+1},
\end{align}
and thus taking limits on both sides we get
\[\frac{\tau}{2}\left(\frac{2-\tau}{\tau\bar{J}}\right)^{\frac{2-\tau}{\tau}}\lim \sup_{n\rightarrow\infty}||\tilde{x}^{n+1} - \hat{x}^{n+1}||_{\ell_2}^2 \leq \lim\limits_{n\rightarrow\infty} a_{n+1} = 0, \]
which implies \eqref{eq:asymptotic}.
\end{proof}
\begin{remark}
\label{rem:asymptotic}
By means of Lemma \ref{lemma:asymptotic} we obtain
\begin{equation}
\label{eq:diff}
\lim\limits_{n\rightarrow\infty}||\tilde{x}^{n} - \tilde{x}^{n+1}||_{\ell_2} \leq \lim\limits_{n\rightarrow\infty}||\tilde{x}^{n} - \hat{x}^{n}||_{\ell_2} + \lim\limits_{n\rightarrow\infty}||\hat{x}^{n} - \hat{x}^{n+1}||_{\ell_2} + \lim\limits_{n\rightarrow\infty}||\hat{x}^{n+1} - \tilde{x}^{n+1}||_{\ell_2} = 0.
\end{equation}
\end{remark}

The following lemma provides a lower bound for the $\varepsilon^n$, which is used to show a contradiction in the proof of Theorem~\ref{thm:conv}.
\HR{Recall that $\phi \in \big(0,\frac{1}{4- \tau}\big)$ is the parameter appearing in the update rule for $\varepsilon$ in step \ref{eq:step4} of both the algorithms CG-IRLS-$\lambda$ and IRLS-$\lambda$.}
\begin{lemma}[{\cite[Lemma 4.5.4, Lemma 4.5.6]{sergei}}]
\label{lem:lowerboundeps}
Let $\tau =1$ and thus $w_j^{n}=\left((\tilde{x}^{n}_j)^2+(\varepsilon^{n})^2\right)^{-\frac{1}{2}}$, $j \in\{1,\ldots,N\}$. There exists a strictly increasing subsequence $(n_l)_{l\in\N}$ and some constant $C>0$ such that 
$$(\varepsilon^{n_l+1})^2 \geq C((w_j^{n_l})^{-1})^{2\tau\phi}|(w_j^{n_l-1})^{-1}-(w_j^{n_l})^{-1}|^{4\phi}.$$
\end{lemma}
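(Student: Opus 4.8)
The plan is to exploit that the functional values along the algorithm decrease to a limit (so consecutive decrements tend to zero) and to isolate from each decrement the part produced by the weight update, which for $\tau=1$ has a convenient closed form. Write $J^n:=J_{\tau,\lambda}(\tilde{x}^n,w^n,\varepsilon^n)$. By the chain of inequalities in Lemma~\ref{lemma:ineqfunctional} the sequence $(J^n)_n$ is non-increasing, hence convergent since $J_{\tau,\lambda}\ge0$; thus $\Delta_n:=J^{n-1}-J^n\ge0$ satisfies $\Delta_n\to0$. Applying Lemma~\ref{lemma:ineqfunctional} with $n$ replaced by $n-1$ gives $J^{n-1}\ge J_{\tau,\lambda}(\tilde{x}^n,w^{n-1},\varepsilon^n)\ge J^n$, whence $\Delta_n\ge J_{\tau,\lambda}(\tilde{x}^n,w^{n-1},\varepsilon^n)-J_{\tau,\lambda}(\tilde{x}^n,w^n,\varepsilon^n)$, and on the right only the weight-dependent summands remain. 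For $\tau=1$, writing (locally) $b_j:=(w_j^{n-1})^{-1}=\big((\tilde{x}_j^{n-1})^2+(\varepsilon^{n-1})^2\big)^{1/2}$ and $c_j:=(w_j^n)^{-1}=\big((\tilde{x}_j^n)^2+(\varepsilon^n)^2\big)^{1/2}$, a short computation shows that the $j$-th terms of $J_{\tau,\lambda}(\tilde{x}^n,w^{n-1},\varepsilon^n)$ and $J_{\tau,\lambda}(\tilde{x}^n,w^n,\varepsilon^n)$ differ by $\frac{1}{2}\big(c_j^2 b_j^{-1}+b_j\big)-c_j=(b_j-c_j)^2/(2b_j)$. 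Since $b_j\le M$ for a constant $M$ independent of $n$ and $j$ — obtained from \eqref{eq:restriction} and \eqref{eq:normw2}, which bound $\|\tilde{x}^{n}\|_{\ell_2}$ uniformly, together with $\varepsilon^{n-1}\le\varepsilon^0=1$ — this yields
\begin{equation}
\label{eq:weightdecr}
\Delta_n\ \ge\ \frac{1}{2}\sum_{j=1}^N\frac{\big((w_j^{n-1})^{-1}-(w_j^n)^{-1}\big)^2}{(w_j^{n-1})^{-1}}\ \ge\ \frac{1}{2M}\big((w_j^{n-1})^{-1}-(w_j^n)^{-1}\big)^2\qquad\text{for every }j.
\end{equation}

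Next I would extract the subsequence from the update rule $\varepsilon^{n+1}=\min\{\varepsilon^n,\,\Delta_n^{\phi}+\alpha^{n+1}\}$ in step~\ref{eq:step4}. If $\alpha=1$ the situation is degenerate: then $\alpha^{n+1}=1\ge\varepsilon^0\ge\varepsilon^n$ forces $\varepsilon^n\equiv1$, and the claim reduces to $1\ge C\big((w_j^{n})^{-1}\big)^{2\phi}\big|(w_j^{n-1})^{-1}-(w_j^n)^{-1}\big|^{4\phi}$, which holds along any subsequence for $C$ small enough since the inverse weights are bounded by $M$; so assume $\alpha<1$. I would then argue that there are infinitely many $n$ for which the second argument of the minimum does not exceed the first, i.e.\ $\varepsilon^{n+1}=\Delta_n^{\phi}+\alpha^{n+1}$. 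Otherwise $\varepsilon^{n+1}=\varepsilon^n$ for all large $n$, so $\varepsilon^n$ stabilises at some $\varepsilon^\ast>0$ with $\varepsilon^\ast<\Delta_n^{\phi}+\alpha^{n+1}$ for all large $n$, contradicting $\Delta_n^{\phi}\to0$ and $\alpha^{n+1}\to0$. Let $(n_l)_l$ enumerate such indices, discarding finitely many so that $n_l\ge2$ (so that $\tilde{x}^{n_l-1}$ is defined).

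Finally I would combine the two steps: for $n=n_l$, the choice of the subsequence together with \eqref{eq:weightdecr} gives $\varepsilon^{n_l+1}\ge\Delta_{n_l}^{\phi}\ge(2M)^{-\phi}\big|(w_j^{n_l-1})^{-1}-(w_j^{n_l})^{-1}\big|^{2\phi}$, and squaring, using $\big((w_j^{n_l})^{-1}\big)^{2\tau\phi}=\big((w_j^{n_l})^{-1}\big)^{2\phi}\le M^{2\phi}$ to reinstate the missing factor on the right, yields
\[
(\varepsilon^{n_l+1})^2\ \ge\ (2M)^{-2\phi}\big|(w_j^{n_l-1})^{-1}-(w_j^{n_l})^{-1}\big|^{4\phi}\ \ge\ C\,\big((w_j^{n_l})^{-1}\big)^{2\tau\phi}\big|(w_j^{n_l-1})^{-1}-(w_j^{n_l})^{-1}\big|^{4\phi}
\]
with $C:=2^{-2\phi}M^{-4\phi}$, which is the assertion. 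I expect the main obstacle to be the closed-form evaluation of the weight-update decrement underlying \eqref{eq:weightdecr}, together with the slightly counterintuitive point that the extra factor $\big((w_j)^{-1}\big)^{2\tau\phi}$ in the statement can simply be absorbed using the uniform \emph{upper} bound $M$ on the inverse weights; the extraction of the subsequence is routine once one knows that $(J^n)_n$ converges.
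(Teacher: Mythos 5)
Your argument is correct, and its skeleton coincides with the paper's: monotonicity of $J_{\tau,\lambda}$ along the iterates (Lemma~\ref{lemma:ineqfunctional}) gives $\Delta_n\to 0$, hence $\varepsilon^n\to 0$, and one extracts exactly the indices at which the $\varepsilon$-update in step~\ref{eq:step4} is realized by its second argument, so that $\varepsilon^{n_l+1}\geq \Delta_{n_l}^{\phi}$. The difference is that the paper stops there and delegates the remaining quantitative step to \cite[Lemma 4.5.6]{sergei}, whereas you make the lemma self-contained: you isolate the weight-update decrement via $\Delta_n \geq J_{\tau,\lambda}(\tilde{x}^n,w^{n-1},\varepsilon^n)-J_{\tau,\lambda}(\tilde{x}^n,w^{n},\varepsilon^n)$, evaluate it in closed form for $\tau=1$ as $\sum_j \big((w_j^{n-1})^{-1}-(w_j^{n})^{-1}\big)^2/\big(2(w_j^{n-1})^{-1}\big)$, and use the uniform bound on the inverse weights (from \eqref{eq:restriction}, \eqref{eq:normw2} and $\varepsilon^n\leq 1$) both to lower-bound $\Delta_n$ termwise and to absorb the factor $\big((w_j^{n_l})^{-1}\big)^{2\tau\phi}$ into the constant $C$ — which is legitimate here since the lemma only asserts the existence of some $C>0$, and the downstream use in the proof of Theorem~\ref{thm:conv} needs nothing more. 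You are also a bit more careful than the paper on the boundary case $\alpha=1$ (where $\alpha^{n+1}\not\to 0$ and $\varepsilon^n\equiv 1$), which your bounded-weights observation handles trivially; the paper's phrasing implicitly assumes $\alpha<1$. So: same route as the paper, with the cited portion of the argument worked out explicitly and correctly.
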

\begin{proof}
Since $J_{\tau,\lambda}(\tilde{x}^n,w^n,\varepsilon^n)$ is decreasing with $n$ due to Lemma~\ref{lemma:ineqfunctional} and bounded below by $0$, the difference $|J_{\tau,\lambda}(\tilde{x}^{n-1},w^{n-1},\varepsilon^{n-1}) - J_{\tau,\lambda}(\tilde{x}^{n},w^{n},\varepsilon^{n})|$ is converging to $0$ for $n\rightarrow\infty$. In addition $\alpha^{n+1}\rightarrow 0$ for $n\rightarrow\infty$, and thus by definition also $\varepsilon^n\rightarrow 0$. Consequently there exists a subsequence $(n_l)_{l\in\N}$ such that 
\begin{equation}
\label{eq:lowerboundepsequality}
\varepsilon^{n_l+1} = |J_{\tau,\lambda}(\tilde{x}^{n_l-1},w^{n_l-1},\varepsilon^{n_l-1}) - J_{\tau,\lambda}(\tilde{x}^{n_l},w^{n_l},\varepsilon^{n_l})|^{\phi}+\alpha^{n_l+1}.
\end{equation}
Following exactly the steps of the proof of~\cite[Lemma 4.5.6.]{sergei} yields the assertion. Observe that all of these steps are also valid for $0<\tau < 1$, although in~\cite[Lemma 4.5.6]{sergei} the author restricted it to the case $\tau \geq 1$.
\end{proof}
\HR{
\begin{remark}\label{rem:eps-zero} The observation in the previous proof that $(\varepsilon^n)$ converges to $0$ will be again important below. 
\end{remark}
}
We \HR{are now prepared for the proof of Theorem~\eqref{thm:conv}}. 

\begin{proof}[Proof of Theorem~\ref{thm:conv}]
Consider the subsequence $(\tilde{x}^{n_l})_{l\in\N}$ of Lemma~\ref{lem:lowerboundeps}. Since $\|\tilde{x}^{n_l}\|_{\ell_2}$ is bounded 
\HR{by~\eqref{eq:restriction}}, there exists a converging subsequence $(\tilde{x}^{n_k})_{k\in\N}$, which has limit $x^{\lambda}$.

Consider the case $\tau = 1$ \HR{and $x^\lambda \neq 0$}. We first show that 
\begin{equation}
\label{eq:limitwx}
-\infty < \lim\limits_{n\rightarrow\infty}\tilde{x}^{n_k+1}_j w_j^{n_k} = \lim\limits_{n\rightarrow\infty}\hat{x}^{n_k+1}_j w_j^{n_k} <\infty\text{, for all }j=1,\ldots,N.
\end{equation}
\HR{It follows from equation~\eqref{eq:lspre} and the boundedness of the residual~\eqref{eq:restrictionexact} that the sequence $(\hat{x}^{n_k+1} w_j^{n_k})_{n_k}$ is bounded, i.e.,
\[
\left\| \left[\hat{x}^{n_k+1}_j w_j^{n_k} \right]_j \right\|_2 = \frac{1}{\lambda} \|\Phi ^*(\Phi \hat{x}^{n_k+1}-y) \| \leq C. 
\]
Therefore, there exists a converging subsequence, for simplicity again denoted by $(\hat{x}^{n_k+1} w_j^{n_k})_{n_k}$.}
To show the identity in~\eqref{eq:limitwx}, we estimate
\begin{align*}
|\hat{x}^{n_k+1}_j w_j^{n_k}-\tilde{x}^{n_k+1}_j w_j^{n_k}|&\leq \frac{\tol_{n_k+1}}{\sqrt{(\tilde{x}_j^{n_k})^2+(\varepsilon^{n_k})^2}}
 \leq\frac{a_{n_k+1}}{\sqrt{2\bar{J}}C_{w^{n_k}}\sqrt{(\tilde{x}_j^{n_k})^2+(\varepsilon^{n_k})^2}}\\
& =\frac{a_{n_k+1}\varepsilon^{n_k+1}}{\sqrt{2\bar{J}}\sqrt{\max\limits_\HR{\ell}(\tilde{x}_\HR{\ell}^{n_k})^2+(\varepsilon^{n_k})^2}\sqrt{(\tilde{x}_j^{n_k})^2+(\varepsilon^{n_k})^2}} \leq\frac{a_{n_k+1}\varepsilon^{n_k+1}}{\sqrt{2\bar{J}}(\max\limits_\HR{\ell}|\tilde{x}_\HR{\ell}^{n_k}|)(\varepsilon^{n_k})}\\
& \leq\frac{a_{n_k+1}}{\sqrt{2\bar{J}}(\max\limits_\HR{\ell}|\tilde{x}_\HR{\ell}^{n_k}|)},
\end{align*}
for all $j=1,\ldots,N$, where the second inequality follows by the upper bound of $\tol_{n}$ in~\eqref{eq:tol}, and the last inequality is due to the definition of $\varepsilon^{n+1}$ which yields $\frac{\varepsilon^{n+1}}{\varepsilon^n} \leq 1$. Since we assumed $\lim\limits_{k\rightarrow\infty}\tilde{x}^{n_k} = x^{\lambda}\neq 0$, there is a $k_0$ such that for all $k \geq k_0$, we have that $\max\limits_j|\tilde{x}_j^{n_k}| \geq \HR{c} > 0$. Since \HR{$(a_{n_k})$ tends} to $0$, we conclude that $\lim\limits_{n\rightarrow\infty}|\hat{x}^{n_k+1}_j w_j^{n_k}-\tilde{x}^{n_k+1}_j w_j^{n_k}| = 0$, and therefore we have~\eqref{eq:limitwx}. Note that we will use the notation $k_0$ several times in the presentation of this proof, but for different arguments. We do not mention it explicitly, but we assume a newly defined $k_0$ to be always larger or equal \HR{to} the previously defined one.

\HR{Next we show that $x^\lambda$ is a minimizer of $F_{1,\lambda}$ by verifying} conditions~\eqref{eq:lassominnonzero} and~\eqref{eq:lassominzero}. \HR{To this end} we notice that by Lemma~\ref{lemma:asymptotic} and Remark~\ref{rem:asymptotic} it follows that $\lim\limits_{k\rightarrow\infty}\hat{x}^{n_k}_j = \lim\limits_{k\rightarrow\infty}\tilde{x}^{n_k}_j = \lim\limits_{k\rightarrow\infty}\tilde{x}^{n_k-1}_j  =x^{\lambda}_j$. By means of this result, in the case of $x^{\lambda}_j\neq 0$, we have, due to continuity arguments, \eqref{eq:lspre} \HR{and Remark~\ref{rem:eps-zero}},
\begin{align*}
-(\Phi^* (y - \Phi x^{\lambda}))_j &= \lim\limits_{k\rightarrow\infty}-(\Phi^*(y - \Phi \hat{x}^{n_k}))_j = \lim\limits_{k\rightarrow\infty}\lambda \hat{x}^{n_k}_jw^{n_k-1}_j = \lambda\lim\limits_{k\rightarrow\infty} \hat{x}^{n_k}_j((\tilde{x}^{n_k-1}_j)^2+(\varepsilon^{n_k-1})^2)^{-\frac{1}{2}} \\
&=\lambda x^{\lambda}_j((x^{\lambda}_j)^2+(0)^2)^{-\frac{1}{2}} = \lambda\sign(x^\lambda_j),
\end{align*}
and thus \eqref{eq:lassominnonzero}.

\HR{In order to show} condition~\eqref{eq:lassominzero} for \HR{$j$} 
such that $x^{\lambda}_j=0$, we follow the main idea in the proof of Lemma 4.5.9.~in~\cite{sergei}. Assume
\begin{equation}
\label{eq:wrongassumption}
\lim\limits_{k\rightarrow\infty} \hat{x}^{n_k}_jw^{n_k-1}_j > 1.
\end{equation} 
Then there exists an $\epsilon > 0 $ and a $k_0\in\N$, such that for all $k\geq k_0$ the inequality $(\hat{x}^{n_k}_jw^{n_k-1}_j)^2 > 1+\epsilon$ holds. \HR{Due to} \eqref{eq:limitwx}, we can furthermore choose $k_0$ large enough such that also $(\tilde{x}^{n_k}_j w^{n_k-1}_j)^2 > 1+\epsilon$ for all $k\geq k_0$. 
\HR{Recalling the identity for} $w_j^{n}$ from Lemma~\ref{lem:lowerboundeps}, we obtain 
\begin{align}
\label{eq:estimxj} (\tilde{x}^{n_k}_j)^2 & > (1+\epsilon)((w_j^{n_k-1})^{-1})^2 \\
\nonumber &= (1+\epsilon)((\tilde{x}_j^{n_k-1})^2 + (\varepsilon^{n_k-1})^2) \geq  (1+\epsilon)(\varepsilon^{n_k+1})^2 \\
\nonumber &\geq  (1+\epsilon) C |(w_j^{n_k})^{-1}|^{2\phi}|(w_j^{n_k-1})^{-1} - (w_j^{n_k})^{-1}|^{4\phi}  \geq (1+\epsilon) C |\tilde x_j^{n_k}|^{2\phi}|(w_j^{n_k-1})^{-1} - (w_j^{n_k})^{-1}|^{4\phi},
\end{align}
where the second inequality follows by the definition of the $\varepsilon^n$, and the third inequality follows from Lemma~\ref{lem:lowerboundeps}. Furthermore, in the last inequality we used that $w_j^{n}\leq |\tilde{x}_j^n|^{-1}$ which follows directly from the definition of $w_j^n$. By means of this estimate, we conclude
\begin{equation}
\label{eq:estimsj}
(w_j^{n_k-1})^{-1}\geq (w_j^{n_k})^{-1} - |(w_j^{n_k-1})^{-1}-(w_j^{n_k})^{-1}| > |\tilde x_j^{n_k}| - ((1+\epsilon)C)^{-\frac{1}{4\phi}}|\tilde{x}_j^{n_k}|^{\frac{2-2\phi}{4\phi}}.
\end{equation}
Since $0<\HR{\phi}<\frac{1}{3}$, the exponent $\frac{2-2\phi}{4\phi} > 1$. In combination with the fact that $\tilde x_j^{n_k}$ is vanishing for $k\to \infty$, we are able to choose $k_0$ large enough to have $((1+\epsilon)C)^{-\frac{1}{4\phi}}|\tilde x_j^{n_k}|^{\frac{2-2\phi}{4\phi}-1}<\bar{\epsilon}:=1-(1+\epsilon)^{-\frac{1}{2}}$ for all $k\geq k_0$ and therefore
\begin{equation}
\label{eq:estimsj2}
(w_j^{n_k-1})^{-1}\geq |\tilde x_j^{n_k}|(1-\bar{\epsilon}).
\end{equation}
The combination of~\eqref{eq:estimxj} and~\eqref{eq:estimsj2} yields
\begin{equation}\label{eq:lalala}
|\tilde x_j^{n_k}|^2 > (1+\epsilon) \left(w_j^{n_k-1}\right)^{-2} \geq (1+\epsilon)
|\tilde x_j^{n_k}|^2(1-\bar{\epsilon})^2.
\end{equation}
Since we have $|\tilde{x}_j^{n_k} w_j^{n_k-1}| > 1+\epsilon$ for all $k\geq k_0$, we also have $\tilde{x}_j^{n_k}\neq 0 $, and thus, we can divide in \eqref{eq:lalala} by $|\tilde{x}_j^{n_k}|$ and insert the definition of $\bar{\epsilon}$ to obtain
$$1 > (1+\epsilon) (1-\bar{\epsilon})^2 =1, $$
which is a contradiction, and thus the assumption~\eqref{eq:wrongassumption} is false. By means of this result and again a continuity argument, we show condition \eqref{eq:lassominzero} by
\begin{equation*}
(\Phi^T(y - \Phi x^{\lambda}))_j = \lim\limits_{k\rightarrow\infty}(\Phi^T(y - \Phi \hat{x}^{n_k}))_j = \lambda \lim\limits_{k\rightarrow\infty}\hat{x}^{n_k}_jw^{n_k-1}_j \leq \lambda.
\end{equation*}
At this point, we have shown that at least the convergent subsequence $(\tilde{x}^{n_k})_{{n_k}\in\N}$ is such that its limit $x^{\lambda}$ is a minimizer of $F_{1,\lambda}(x)$. To show that this is valid for any convergent subsequence of  $(\tilde{x}^{n})_{{n}\in\N}$, we remind that the subsequence $(\tilde{x}^{n_k})_{{n_k}\in\N}$ is the one of Lemma~\ref{lem:lowerboundeps}, and therefore fulfills~\eqref{eq:lowerboundepsequality}. Thus, we can adapt~\cite[Lemma 4.6.1]{sergei} to our case, following the arguments in the proof. These arguments only require the monotonicity of the functional $J_{\tau,\lambda}$, which we show in Lemma~\ref{lemma:ineqfunctional}. Consequently the limit $x^{\lambda}$ of any convergent subsequence of $(\tilde{x}^{n})_{{n}\in\N}$ is a minimizer of $F_{1,\lambda}(x)$.

Consider the case $0<\tau < 1$. 
\HR{The transformation $\mathcal{N}_{\zeta}(x)$ defined in~\eqref{eq:bijmap}} is continuous and bijective. Thus, $\breve{x}^{\lambda}:=\mathcal{N}_{\upsilon / \tau}^{-1}(x^{\lambda})$ is well-defined, and $x^{\lambda}_j = 0$ if and only if $\breve{x}^{\lambda}_j=0$. At a critical point of the {\it differentiable} functional $\breve{F}_{\tau,\lambda}$, its first derivative has to vanish \HR{which is equivalent to} 
the conditions
\begin{equation}
\label{eq:crittauless1}
\frac{\upsilon}{\tau}|x_j|^{\frac{\upsilon-\tau}{\tau}}\left(\Phi^*y-\Phi^*\Phi \mathcal{N}_{\upsilon / \tau}(x)\right)_j +\lambda\upsilon\sign(x_j)|x_j|^{\upsilon-1} = 0,\quad     j=1,\ldots,N.
\end{equation}
We 
\HR{show now} that 
$\breve{x}^{\lambda}$ fulfills this first order optimality condition. It is obvious that for all $j$ such that $\breve{x}_j^{\lambda} = 0$ the condition is trivially fulfilled. Thus, it remains to consider all $j$ where $\breve{x}_j^{\lambda}\neq 0$. As in the case of $\tau = 1$, we conclude by Lemma~\ref{lemma:asymptotic} and Remark~\ref{rem:asymptotic} that $\lim\limits_{k\rightarrow\infty}\hat{x}^{n_k}_j = \lim\limits_{k\rightarrow\infty}\tilde{x}^{n_k}_j = \lim\limits_{k\rightarrow\infty}\tilde{x}^{n_k-1}_j  =x^{\lambda}_j$. Therefore continuity arguments as well as~\eqref{eq:lspre} yield
\begin{align*}
-(\Phi^* (y - \Phi x^{\lambda}))_j &= \lim\limits_{k\rightarrow\infty}-(\Phi^*(y - \Phi \hat{x}^{n_k}))_j = \lim\limits_{k\rightarrow\infty}\lambda \HR{\tau} \hat{x}^{n_k}_jw^{n_k-1}_j = \lambda\tau\lim\limits_{k\rightarrow\infty} \hat{x}^{n_k}_j((\tilde{x}^{n_k-1}_j)^2+(\varepsilon^{n_k-1})^2)^{-\frac{2-\tau}{2}} \\
&=\lambda\tau x^{\lambda}_j((x^{\lambda}_j)^2+(0)^2)^{-\frac{2-\tau}{2}} = \lambda\tau\sign(x^\lambda_j)|x_j|^{\tau-1}.
\end{align*}
We replace $x^{\lambda}=\mathcal{N}_{\upsilon / \tau}(\breve{x}^{\lambda})$ and obtain
\begin{align*}
-(\Phi^* (y - \Phi \mathcal{N}_{\upsilon / \tau}(\breve{x}^{\lambda}))_j  &= \lambda\tau\sign((\mathcal{N}_{\upsilon / \tau}(\breve{x}^{\lambda}))_j)|(\mathcal{N}_{\upsilon / \tau}(\breve{x}^{\lambda}))_j|^{\tau-1}\\
& =\lambda\tau\sign(\breve{x}^{\lambda}_j)|\breve{x}^{\lambda}_j|^{\upsilon-\frac{\upsilon}{\tau}}.
\end{align*}
\HR{We multiply} this identity by $\frac{\upsilon}{\tau}|x_j|^{\frac{\upsilon-\tau}{\tau}}$ and obtain~\eqref{eq:crittauless1}. 

If $\breve{x}^{\lambda}$ is also a global minimizer of $\breve{F}_{\upsilon,\lambda}$, then $x^{\lambda}$ is a global minimizer of $F_{\tau,\lambda}$. This is due the equivalence of the two problems \HR{which was} shown in~\cite[Proposition 2.4]{RamlauZarzer12}  \HR{based} on the continuity and bijectivity of the mapping $\mathcal{N}_{\upsilon / \tau}$~\cite[Proposition 3.4]{Zarzer09}.
\end{proof}

\section{Numerical Results}
\label{sec:numerics}
We illustrate  the theoretical results of this paper by several numerical experiments. We first show that \HR{our modified versions of IRLS}
yield significant  improvements in terms
of computational time \HR{and often outperform} the state of the art methods Iterative Hard Thresholding (IHT)~\cite{Blumensath09} and Fast Iterative Soft-Thresholding Algorithm (FISTA)~\cite{beck09}. 

Before going into the detailed presentation of the numerical tests, we raise two plain numerical disclaimers concerning the numerical stability of CG-IRLS and CG-IRLS-$\lambda$:
\begin{itemize}
\item The first issue concerns IRLS methods in general: The case where $\varepsilon^{n}\rightarrow 0$, \HR{i.e.,} $x_j^{n}\rightarrow 0$, for some $j\in\{1,\ldots,N\}$ and $n\rightarrow\infty$, is very likely since our goal is the computation of sparse vectors. In this case \HR{$w^{n}_j$} will \HR{for} some $n$ become too large to be properly represented by a computer. Thus, in practice, we have to provide a lower bound for $\varepsilon$ by some $\varepsilon^{\textrm{min}}>0$. Imposing such a limit has the theoretical disadvantage that in general the algorithms are only calculating an approximation of the respective problems~\eqref{eq:ltauproblem} and~\eqref{eq:prob}. Therefore, to obtain a \enquote{sufficiently good} approximation, one has to choose $\varepsilon^{\textrm{min}}$ sufficiently small. This raises yet another numerical issue: If we choose, e.g., $\varepsilon^{\textrm{min}}=1\text{\sc{e}-}8$ and assume that also $x_j^{n}  \ll 1$, then $w^{n}_j$ is of the order $1\text{\sc{e}+}8$. Compared to the entries of the matrix $\Phi$, which are of the order $1$, any multiplication or addition by such a value will cause serious numerical errors. In this context  we cannot expect that the IRLS method reaches high accuracy, and saturation effects of the error are likely to occur before machine precision.
\item The second issue concerns 
the CG method: In Algorithm~\ref{CG} and Algorithm~\ref{MCG} we have to divide at some point by $\vectornorm{T^*p^i}^2_{\ell_2}$ or $\langle Ap^i,p^i\rangle_{\ell_2}$ respectively. As soon as the residual decreases, also $p^i$ decreases with the same order of magnitude. If the above vector products are at the level of machine precision, e.g.~$1\text{\sc{e}-}16$, this would mean that the norm of the residual is of the order of its square-root, here $1\text{\sc{e}-}8$. But this is the measure of the stopping criterion. Thus, if we ask for a high precision of the CG method, the algorithm might become numerically unstable, \Rev{depending on the machine precision. Such saturation of the error is an intrinsic property of the CG method, and   here we want to mention it just as a disclaimer. As described further below, we set the lower bound of the CG tolerance to the value $1\text{\sc{e}-}12$, i.e., as soon as this accuracy is reached, we consider the result as \enquote{numerically exact}. For this particular bound the method works stably on the machine that we used.} 
\end{itemize} 

In the following, we start with a description of the general test settings, which  will be common for both Algorithms~CG-IRLS and CG-IRLS-$\lambda$. Afterwards we independently analyze the speed of both methods and compare them with state of the art algorithms, namely IHT and FISTA. We respectively start with a single trial, followed by a speed-test on a variety of problems. We will also compare the performance of both CG-IRLS and CG-IRLS-$\lambda$ for the noiseless case
\HR{which leads to surprising results}.

\subsection{Test settings}
All tests are performed with MATLAB version R2014a. For the sake of faster tests (in some cases \HR{experiments run for several} days) and simplicity, we restrict ourselves to experiments with models defined by real numbers although everything can be similarly done 
\HR{over} the complex field. To exploit the advantage of fast matrix-vector multiplications and to allow high dimensional tests, we use randomly sampled partial discrete cosine transformation matrices $\Phi$. We perform tests in three different dimensional settings (later we will extend them to higher dimension) and choose different values $N$ of the dimension of the signal, the amount $m$ of measurements, the respective sparsity $k$ of the synthesized solutions, and the index $K$ in Algorithm (CG-)IRLS:
\begin{center} \begin{tabular}{|c|c|c|c|}
\hline
  & \textbf{Setting A} & \textbf{Setting B} & \textbf{Setting C} \\
  \hline
N & 2000 & 4000 & 8000\\
m & 800 & 1600 & 3200\\
k & 30 & 60 & 120\\
K & 50  & 100 & 200\\
\hline
\end{tabular}
\end{center}

For each of these settings, we draw at random a set of 100 synthetic problems on which a speed-test is performed. 
\HR{For each synthetic problem 
the support $\Lambda$ is determined by the first $k$ entries of a random permutation of the numbers $1,\ldots,N$.} Then we draw the sparse vector $x^*$ at random with entries $x^*_i \sim \mathcal{N}(0,1)$ for $i \in \Lambda$ and $x^*_{\Lambda^c}=0$, and a randomly row sampled \HR{normalized} discrete cosine matrix $\Phi$, where the full non-normalized discrete cosine matrix is given by
\[\Phi^{\text{full}}_{i,j}=\begin{cases}1,&i=1, j = 1,\ldots,N, \\ 
\sqrt{2}\cos \left(\frac{\pi(2j-1)(i-1)}{2N}\right),&2\leq i\leq N,1\leq j\leq N.\end{cases} \]
For a given noise vector $e$ of entries $e_i\sim\mathcal{N}(0,\sigma^2)$, we eventually obtain the measurements $y=\Phi x^* + e$. 
Later we need to specify the noise level and we will do so by fixing a signal to noise ratio. By assuming that $\Phi$ has the \emph{Restricted Isometry Property} of order $k$ (compare, e.g.,~\cite{fora13}), i.e., $\|\Phi z\|_{\ell_2} \sim \|z\|_{\ell_2}$, for all $z \in \mathbb R^N$ with $\# \supp(z) \leq k$, we can estimate the measurement signal to noise ratio by
\[\text{MSNR} \defleft \frac{\mathbb{E}(\|\Phi x^*\|_{\ell_2})}{\mathbb{E}(\|e\|_{\ell_2})} \sim \frac{\sqrt{k}}{\sqrt{m}\sigma}.\]
In practice, we set the MSNR first and choose the noise level $\sigma=\frac{\sqrt{k}}{\text{MSNR}\sqrt{m}}$. If $\text{MSNR} = \infty$, the problem is noiseless, i.e., $e=0$.
 \subsection{Algorithm~CG-IRLS}
 \label{sec:numAlgCGIRLS}
\paragraph{Specific settings.}
We restrict the maximal number of outer iterations to 30. Furthermore, we modify \eqref{eq:deftol}, so that the CG-algorithm also stops as soon as $\vectornorm{\rho^{n+1,i}}_{\ell_2} \leq 1\text{\sc{e}-}12$. As soon as the residual undergoes this particular threshold, we call the CG solution (numerically) \enquote{exact}. The $\varepsilon$-update rule is extended by imposing the lower bound $\varepsilon^n=\varepsilon^n \vee \varepsilon^{\min}$ where $ \varepsilon^{\min}=1\text{\sc{e}-}9/N$. The summable sequence $(a_n)_{n \in \mathbb N}$ in Theorem \ref{thm: convergence} is defined by  $a_n=100\cdot (1/2)^n$. 

As we define the synthetic tests by choosing the solution  $x^*$ of the linear system $\Phi x^* =y$ (here we assume $e=0$), we can use it to determine the error of the iterations $\|\tilde{x}^n - x^*\|_{\ell_2}$.

\paragraph{IRLS vs.~CG-IRLS} 
To get an immediate impression about the general behavior of CG-IRLS, we compare its performance in terms of accuracy and speed to IRLS, where the intermediate linear systems are solved exactly \HR{via Gaussian elimination (i.e., by the standard MATLAB backslash operator)}.  We choose IHT as a first order state of the art benchmark, to get a fair comparison with another method which can exploit fast matrix-vector multiplications. 

In this first single trial experiment, we choose an instance of setting B, and set $\tau=1$ for CG-IRLS and compare it to IRLS with different values of $\tau$. The result is presented in the left plot of Figure~\ref{fig:singletrialAlgB}.  
We show the decrease of the relative error in $\ell_2$-norm as a function of the computational time. 
One sees that the computational time of IRLS is significantly outperformed by CG-IRLS and by the exploitation of fast matrix-vector multiplications. The standard IRLS is not competitive in terms of computational time, even if we choose $\tau < 1$, which is known to yield super-linear convergence \cite{dadefogu10}. With increasing dimension of the problem, in general the advantage of using the CG method becomes even more significant. However CG-IRLS does not outperform yet IHT in terms of computational time. We also observe the expected numerical error saturation (as mentioned at the beginning of this section), which appears as soon as the accuracy falls below $1\text{\sc{e}-}13$.
\begin{figure}[ht!]
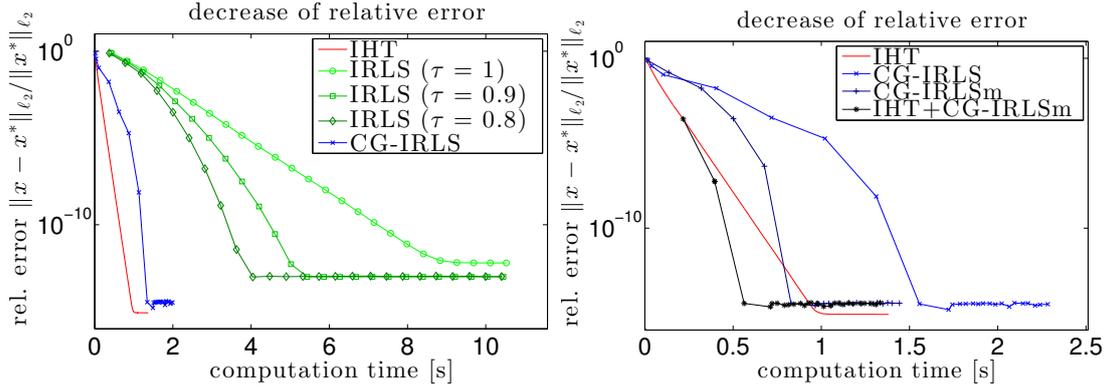

\begin{center}
\includegraphics[width=0.45\textwidth]{\figurespath{singletrialIRLS}}
\includegraphics[width=0.45\textwidth]{\figurespath{singletrialCGIRLS}}
\end{center}
\caption{Single trial of Setting B. Left: Relative error plotted against the computational time for IRLS[$\tau =1$] (light green, $\circ$),  IRLS[$\tau =0.9$] (green, $\square$),  IRLS[$\tau =0.8$] (dark green, $\Diamond$), CG-IRLS (blue, $\times$), and IHT (red, $-$). Right: Relative error plotted against computational time for CG-IRLS (blue, $\times$), CG-IRLSm (dark blue, $+$), IHT+CG-IRLSm (black, $*$), and IHT (red, $-$).}
\label{fig:singletrialAlgB}
\end{figure}

For this test, we set the parameter $\beta$ in the $\varepsilon$-update rule to 2. We comment on the choice of this particular parameter in a dedicated paragraph below.
\paragraph{Modifications to CG-IRLS}
As we have shown by a single trial in the previous paragraph,  \mbox{CG-IRLS} as it is presented in Section~\ref{sec:IRLSCGalg} is not able to outperform IHT. Therefore, we introduce the following practical modifications to the algorithm:
\begin{enumerate} 
\item We introduce the parameter \texttt{maxiter\_cg}, which defines the maximal number of inner CG iterations. Thus, the inner loop of the algorithm stops as soon as \texttt{maxiter\_cg} iterations were performed, even if the theoretical tolerance $\tol_n$ is not reached yet.
\item CG-IRLS includes a stopping criterion depending on $\tol_{n+1}$, which is only {\it implicitly} given as a function of $\varepsilon^{n+1}$ (compare Section~\ref{sec:resfunc}, and in particular formulas~\eqref{eq:deftol} and~\eqref{eq:defcnwn}), which in turn depends on the current $\tilde x^{n+1}$ by means of sorting
and a matrix-vector multiplication. To further reduce the computational cost of each iteration, we avoid the aforementioned operations by only updating $\tol_{n+1}$ outside the MCG loop, i.e., after the computation of $\tilde x^{n+1}$ with fixed $\tol_{n+1}$ we update $\varepsilon^{n+1}$ as in step 3 of Algorithm CG-IRLS and subsequently update $\tol_{n+2}$ which again is fixed for the computation of $\tilde{x}^{n+2}$.
\item The left plot of Figure~\ref{fig:singletrialAlgB} reveals that  in the beginning CG-IRLS reduces the error more slowly than IHT, and it gets faster after it reached a certain ball around the solution. Therefore, we use IHT as a warm up for CG-IRLS, in the sense that we apply a number \texttt{start\_iht} of IHT iterations to compute a proper starting vector for CG-IRLS.
\end{enumerate}
We call \emph{CG-IRLSm} the algorithm with modifications (i) and (ii), and \emph{IHT+CG-IRLSm} the algorithm with modifications (i), (ii), and (iii). We set $\texttt{maxiter\_cg}=\lfloor m/12\rfloor$, $\texttt{start\_iht}=150$, and we set $\beta$ to 0.5. If these algorithms are executed on the same trial as in the previous paragraph, we obtain the result which is shown on the right plot in Figure~\ref{fig:singletrialAlgB}. For this trial, the modified algorithms show a significantly reduced computational time with respect to the unmodified version and they \HR{now} converge faster than IHT. However, the introduction of the practical modifications (i)--(iii) does not necessarily comply anymore with the  assumptions of Theorem~\ref{thm: convergence}.  Therefore, we do not have rigorous convergence and recovery guarantees anymore 
and recovery might potentially fail more often.
In the next paragraph, we empirically investigate the failure rate and explore the performance of the different methods on a sufficiently large test set. \\

\Rev{In order to investigate the influence of the tolerance $\tol_n$ and 
the number of (inner) iterations of the MCG procedure performed along the IRLS iterations, we plot both quantities in Figure~\ref{fig:singletrialAlgB_itertol} for CG-IRLS, CG-IRLSm, and IHT+CG-IRLSm. Obviously the tolerance quickly decreases in any method. For IHT+CG-IRLSm also the number of MCG iterations decreases (until the method becomes unstable), while for the other two methods the number of MCG iterations first increases and then decreases again until the methods also become unstable. In CG-IRLSm, the number of MCG iterations is bounded by $\texttt{maxiter\_cg}=\lfloor m/12\rfloor$. 
This more economical behavior 
only slightly influences the approximation of the MCG solutions and leads to reduced computational time.
\begin{figure}[ht!]
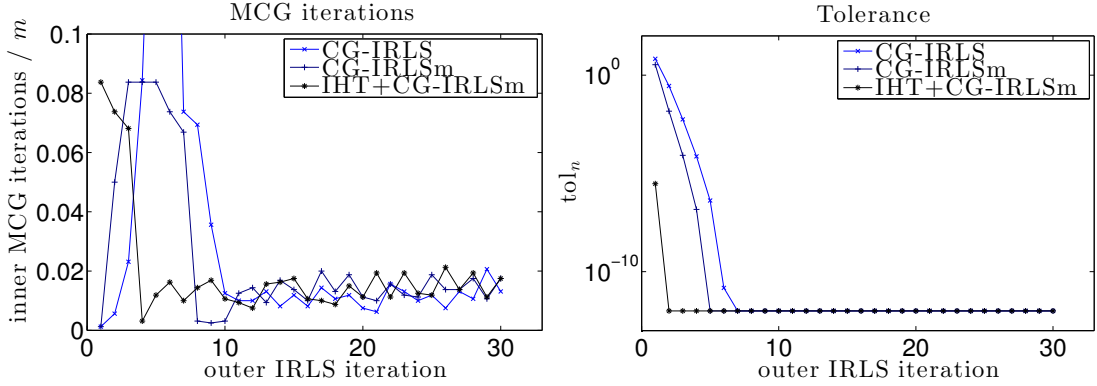

\begin{center}
\includegraphics[width=0.45\textwidth]{\figurespath{singletrialCGIRLS_iter}}
\includegraphics[width=0.45\textwidth]{\figurespath{singletrialCGIRLS_tol}}
\end{center}
\caption{Single trial of Setting B. Left: Number of MCG iterations (divided by $m$) plotted against the outer iteration number for CG-IRLS (blue, $\times$), CG-IRLSm (dark blue, $+$), IHT+CG-IRLSm (black, $*$). Right: Tolerance $\tol_n$ plotted against the outer iteration number for the same algorithms.}
\label{fig:singletrialAlgB_itertol}
\end{figure}}

Another natural modification to CG-IRLS \HR{consists in} the introduction of a preconditioner to compensate for the deterioriation of the condition number of $\Phi D_n \Phi^*$ as soon as $\varepsilon^n$ \HR{becomes} too small (when $w^n$ \HR{becomes very large}).
The matrix $\Phi \Phi^*$ is very well conditioned, while the matrix $\Phi D_n \Phi^*$ ``sandwiching'' $D_n$ 
\HR{becomes more ill-conditioned as $n$ gets larger,} and, unfortunately, it is hard to identify additional ``sandwiching'' preconditioners $P_n$ \HR{such that the matrix $P_n \Phi D_n \Phi^* P_n^*$ is suitably well-conditioned}. 
In the numerical experiments standard preconditioners failed to yield any significant improvement in terms of convergence speed. Hence, we 
\HR{refrained from introducing further preconditioners}. 
Instead, as we will show at the end of Subsection \ref{CGIRLSl}, a standard \HR{(Jacobi)} preconditioning 
of the matrix $$\left(\Phi^* \Phi  + \diag\left[\lambda\tau w_j^n\right]_{j=1}^N\right),$$ where the source of singularity is added to the product $\Phi^* \Phi$, leads to a dramatic improvement of computational speed.

\paragraph{Empirical test on computational time and failure rate}
In the following, we define a method to be ``successful'' if it is computing a solution $x$ for which the relative error $\|x - x^*\|_{\ell_2} / \|x^*\|_{\ell_2} \leq 1\text{\sc{e}-}13$. The computational time of a method is measured by the time it needs to produce the first iterate which reaches this accuracy. In the following, we present the results of a test which runs the methods CG-IRLS, CG-IRLSm, IHT+CG-IRLSm, and IHT on 100 trials of Setting A, B, and C respectively and $\tau\in\{1,0.9,0.8\}$. For values of $\tau<0.8$ the methods become unstable, due to the severe nonconvexity of the problem and 
\HR{it seems that good performance cannot} be reached \HR{below} this level. Therefore we do not investigate further these cases. 
Let us stress that IHT does not  depend on $\tau$. 

In each setting we check for each trial which methods \HR{succeeds or fails}. If all methods \HR{succeed}, we \HR{compare} the computational time, determine the fastest method, and \HR{count} the computational time of each method for the respective mean computational time. The results are shown in Figure~\ref{fig:EmpTestIRLSCG}. By analyzing the diagrams, we are able to distill the following observations:
\begin{itemize}
\item \HR{Especially} in Setting A and B,  CG-IRLSm and IHT+CG-IRLSm are better or comparable to IHT in terms of mean computational time 
and provide in most cases the fastest method. CG-IRLS performs much worse. The failure rate of all the methods is negligible here.
\item The gap in the computational time between all methods becomes larger when $N$ is larger. 
\item With increasing dimension of the problem, the advantage of using the modified CG-IRLS methods subsides, in particular in Setting C.

\item In the literature \cite{Chartrand07,Chartrand08,ChartrandYin08,dadefogu10} superlinear convergence is reported for $\tau<1$, and perhaps one of the most surprising outcomes is that the best results for all CG-IRLS methods are instead obtained for $\tau = 1$. This \HR{can probably be} explained by observing that superlinear convergence kicks in only in a rather small ball around the solution and hence does not necessarily improve the actual \HR{computation} time! 
\item 
\HR{Not only the computational performance,} but also the failure rate of the CG-IRLS based methods increases with decreasing $\tau$. However, as expected, CG-IRLS \HR{succeeds} in the convex case of $\tau = 1$. The failure of CG-IRLS for $\tau < 1$ can \HR{probably} be attributed to non-convexity. 
\end{itemize} 

\begin{figure}[ht!]
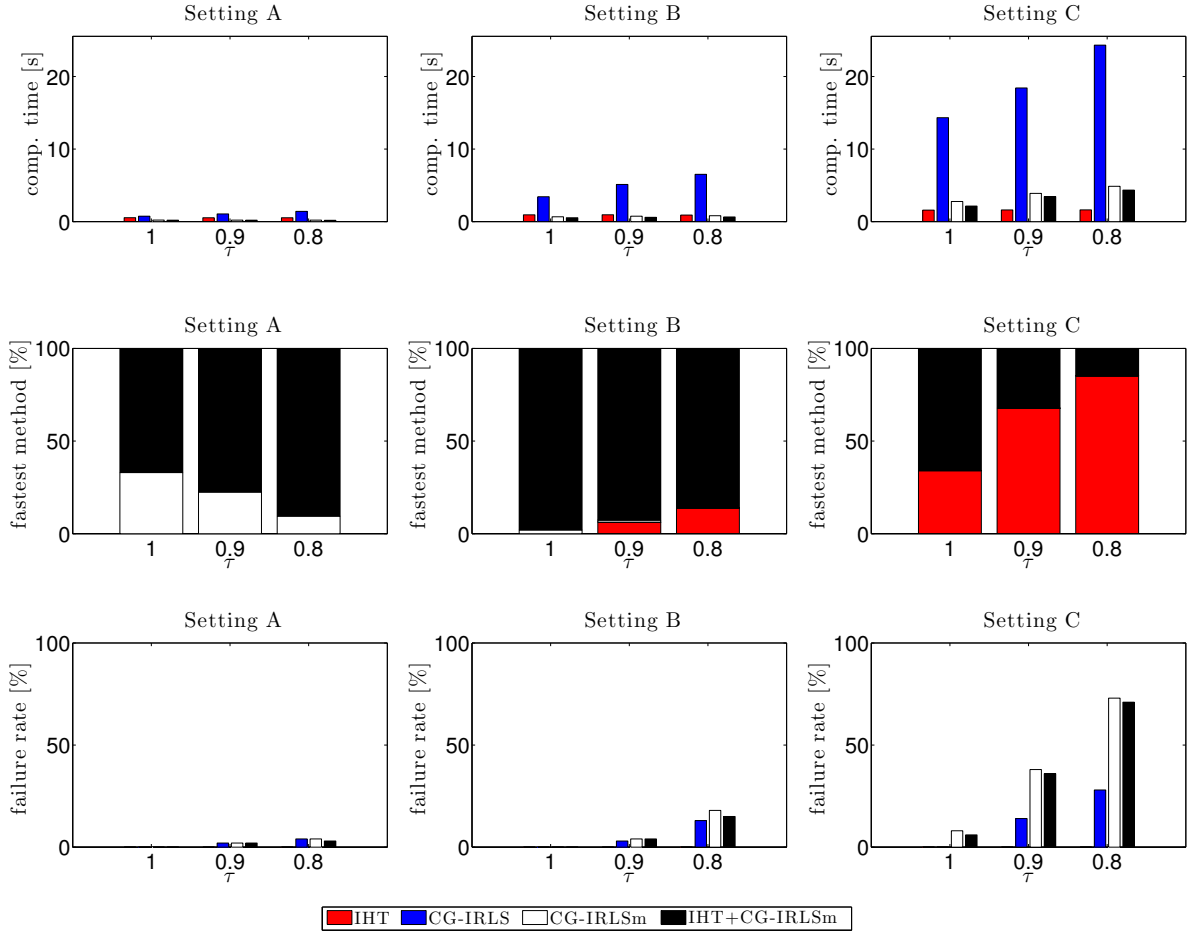

\begin{center}
\includegraphics[width=\textwidth]{\figurespath{speedtest_comptime}}
\includegraphics[width=\textwidth]{\figurespath{speedtest_fastest}}
\includegraphics[width=\textwidth]{\figurespath{speedtest_failure}}
\end{center}
\caption{Empirical test on Setting A, B, and C for the methods CG-IRLS (blue), CG-IRLSm (white), IHT+CG-IRLSm (black), and IHT (red). Upper: Mean computational time. Center: Fastest method (in \%). Lower: Failure rate (in \%).}
\label{fig:EmpTestIRLSCG}
\end{figure}

We conclude that CG-IRLSm and IHT+CG-IRLSm perform well for $\tau = 1$ and for the problem dimension $N$ within the range of 1000 -- 10000. They are even able to outperform IHT. 
However, by extrapolation of the numerical results IHT is expected to be faster for $N>10000$. (This is in compliance with the general folklore that first order methods should be preferred for higher dimension. However, as we \HR{will see} in Subsection \ref{CGIRLSl}, a proper preconditioning of CG-IRLS-$\lambda$ will win over IHT for dimensions $N \geq 10^5$!)
As soon as $N<1000$, direct methods \HR{such} as Gaussian elimination are faster than CG, and thus, one should use standard IRLS with $\tau<1$. 
\paragraph{Choice of $\beta$, $\texttt{maxiter\_cg}$, and $\texttt{start\_iht}$}
The numerical tests in the previous paragraph were preceded by a careful and systematic investigation of the tuning of the parameters $\beta$, $\texttt{maxiter\_cg}$, and $\texttt{start\_iht}$. While we fixed $\texttt{start\_iht}$ to 100, 150, and 200 for Setting A, B, and C respectively to produce a good starting value, we tried $\beta \in \{1/N, 0.01, 0.1, 0.5, 0.75, 1, 1.5, 2, 5, 10\}$, and $\texttt{maxiter\_cg}\in\{\lfloor m/8 \rfloor,\lfloor m/12 \rfloor,\lfloor m/16 \rfloor\}$ for each setting. The results of this parameter sensitivity study can be summarized as follows:
\begin{itemize}
\item The best computational time is obtained for $\beta \sim 1$. In particular the computational time is not depending substantially on $\beta$ in this order of magnitude. More precisely, for CG-IRLS the choice of $\beta =0.5$ and for (IHT+)CG-IRLSm the choice of $\beta = 2$ works best.
\item The choice of $\texttt{maxiter\_cg}$ very much determines the tradeoff between failure and speed of the method. The value $\lfloor m/12 \rfloor$ seems to be the best compromise. For a smaller value the failure rate becomes too high, for a larger value the method is too slow.
\end{itemize}
\paragraph{Phase transition diagrams.}
\HR{Besides} the empirical analysis of the speed of convergence, we \HR{also 
investigate} the robustness of CG-IRLS with respect to the achievable 
sparsity level for exact recovery of $x^*$. Therefore, we fix $N = 2000$ and we compute a phase transition diagram for IHT and CG-IRLS on a regular Cartesian $50\times 40$ grid, where one axis represents $m/N$ and the other represents $k/m$. For each grid point we plot the empirical success recovery rate, which is numerically realized by running both algorithms on 20 random trials. CG-IRLS or IHT is successful if it is able to compute a solution with a relative error of less than $1\text{\sc{e}-}4$ within 20 or 500 (outer) iterations respectively. Since we \HR{aim at simulating} a setting in which the sparsity $k$ is not known exactly, we set the parameter $K = 1.1\cdot k$ for both IHT and CG-IRLS. The interpolated plot is shown in Figure~\ref{fig:PT_diagram}. It turns out that CG-IRLS has a significantly higher success recovery rate than IHT for less sparse solutions. 
\begin{figure}[h!]
\begin{center}
\includegraphics[width=\textwidth]{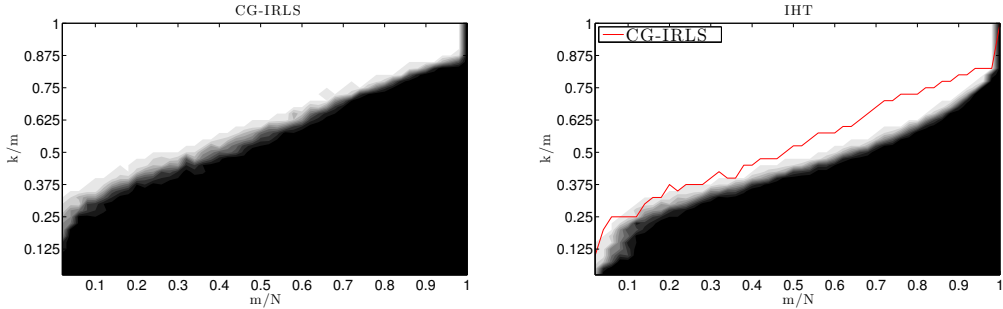}
\end{center}
\caption{Phase transition diagrams of IHT and CG-IRLS for $N=2000$. The recovery rate is presented in grayscale values from 0\% (white) up to 100\% (black). As a reference, in the right subfigure, the 90\% recovery rate level line of the CG-IRLS phase transition diagram is plotted to show more evidently the improved success rate of the latter algorithm.}
\label{fig:PT_diagram}
\end{figure}
\subsection{Algorithm CG-IRLS-$\lambda$}\label{CGIRLSl}
\paragraph{Specific settings}
We restrict the maximal number of outer iterations to 25. Furthermore, we modify \eqref{eq:stopcritR}, so that the CG-algorithm also stops as soon as $\vectornorm{\rho^{n+1,i}}_{\ell_2} \leq 1\text{\sc{e}-}16\cdot N^{3/2}m$. As soon as the residual undergoes this particular threshold, we call the CG solution (numerically) \enquote{exact}. The $\varepsilon$-update rule is extended by imposing the lower bound $\varepsilon^n=\varepsilon^n \vee \varepsilon^{\min}$ where $ \varepsilon^{\min}=1\text{\sc{e}-}9$. Additionally we propose to choose $\varepsilon^{n+1}\leq 0.8^n\varepsilon^n$, which practically turns out to increase dramatically the speed of convergence. The summable sequence $(a_n)_{n\in\N}$ in Theorem~\ref{thm:conv} is defined by setting $a_n=\sqrt{Nm}\cdot 
\HR{10^4} \cdot (1/2)^n$. We split our investigation into a noisy and a noiseless setting.

For the noisy setting we set $\text{MSNR} = 10$. According to~\HR{\cite{birits09,candes2009}}, 
we choose $\lambda = c \sigma\sqrt{m \log N}$ as a near-optimal regularization parameter, where we empirically determine $c = 0.48$. Since we work with relatively large values of $\lambda$ in the regularized problem~\eqref{eq:prob}, we cannot use the synthesized sparse solution $x^*$ as a reference for the convergence analysis. Instead, we need another reliable method to compute the minimizer of the functional. In the convex case of $\tau=1$, this is performed by the well-known and fast algorithm FISTA~\cite{beck09}, which shall also serve as a benchmark for the speed analysis. In the non-convex case of $\tau < 1$, there is no method which guarantees the computation of the global minimizer, thus, we have to omit a detailed speed-test in this case. However, we describe the behavior of Algorithm CG-IRLS-$\lambda$ for $\tau$ changing.  

If the problem is noiseless, i.e., $e=0$, the solution $x^{\lambda}$ of~\eqref{eq:prob} converges to the solution of~\eqref{eq:ltauproblem} for $\lambda\rightarrow 0$. Thus, we choose $\lambda=m\cdot 1\text{\sc{e}-}8$, and assume the synthesized sparse solution $x^*$ as a good proxy for the minimizer and a reference for the convergence analysis. (Actually, this can also be seen the other way around, i.e., we use the minimizer $x^\lambda$ of the regularized functional to compute a good approximation to $x^*$.) 
\HR{It turns out that} for $\lambda \approx 0$, as we comment below in more detail, FISTA is basically of no use. 
\paragraph{CG-IRLS-$\lambda$ vs.~IRLS-$\lambda$}
As in the previous subsection, we 
\HR{first show} that the CG-method within IRLS-$\lambda$ \HR{leads to} significant improvements in terms of the computational speed. Therefore we choose a noisy trial of Setting B, and compare the computational time of the methods IRLS-$\lambda$, CG-IRLS-$\lambda$, and FISTA. The result is presented on the left plot of Figure~\ref{fig:singletriallambda}. We observe, that CG-IRLS-$\lambda$ computes the first iterations in much less time than IRLS-$\lambda$, but due to bad conditioning of the inner CG problems it performs much worse afterwards. Furthermore, 
\HR{as may be expected}, the algorithm is not suitable to compute a highly accurate solution. For the computation of a solution with a relative error in the order of $1\text{\sc{e}-}3$, CG-IRLS-$\lambda$ outperforms FISTA. 
FISTA is able to compute \HR{highly accurate solutions}, 
\HR{but} a solution with a relative error of $1\text{\sc{e}-}3$ should be sufficient in most applications because the goal in general is not to compute the minimizer of the Lagrangian functional but an approximation of the sparse signal.  
\begin{figure}[ht]
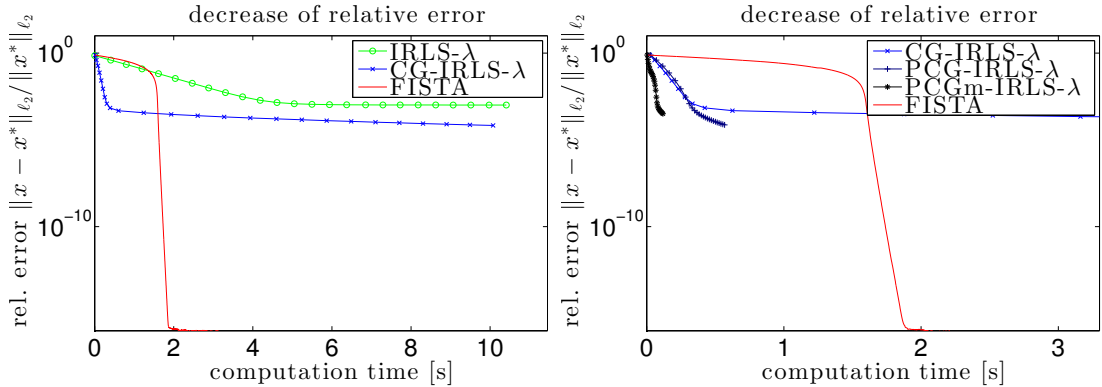

\begin{center}
\includegraphics[width=0.45\textwidth]{\figurespath{singletrialIRLSlambda}}
\includegraphics[width=0.45\textwidth]{\figurespath{singletrialCGIRLSlambda}}
\end{center}
\caption{Single trial of Setting B. Left: Relative error plotted against the computational time for IRLS-$\lambda$ (light green, $\circ$),  CG-IRLS-$\lambda$ (blue, $\times$), and FISTA (red, $-$). Right: Relative error plotted against computational time for CG-IRLS-$\lambda$ (blue, $\times$), PCG-IRLS-$\lambda$ (dark blue, $+$), PCGm-IRLS-$\lambda$ (black, $*$), and FISTA (red, $-$).}
\label{fig:singletriallambda}
\end{figure}
\paragraph{Modifications to CG-IRLS-$\lambda$}
To further decrease the computational time of CG-IRLS-$\lambda$, we propose the following modifications:
\begin{enumerate}
\item To overcome the bad conditioning in the CG loop, we precondition the matrix $A_n =\Phi^* \Phi  + \diag\left[\lambda\tau w_j^n\right]_{j=1}^N$ by means of the Jacobi preconditioner, i.e., we pre-multiply the linear system by the inverse of its diagonal, $\left(\diag A_n\right)^{-1}$, which is a very efficient operation in practice.
\item We introduce the parameter $\texttt{maxiter\_cg}$ which defines the maximal number of inner CG iterations \HR{and is set to the value $\texttt{maxiter\_cg}= 4$
in the following.}
\end{enumerate}
\HR{The algorithm with modification (i) is called PCG-IRLS-$\lambda$, and the one with modification (i) and (ii) PCGm-IRLS-$\lambda$.} 
\HR{We} run these algorithms on the same trial of Setting B as in the previous paragraph. The respective result is shown on the right plot of Figure~\ref{fig:singletriallambda}. This time, preconditioning effectively yields a strong decrease of computational time, especially in the final iterations where $A_n$ is badly conditioned. Furthermore, modification (ii) importantly increases the performance of the proposed algorithm also in the initial iterations. However, again we have to take into consideration that we may violate the assumptions of Theorem~\ref{thm:conv} \HR{so that convergence is not guaranteed anymore and failure rates might potentially increase}.
\Rev{In the two paragraphs below that are entitled \emph{Empirical test on computational time and failure rate with noisy/noiseless data}}, we present simulations on noisy and noiseless data, which give a more precise picture of the speed and failure rate of the previously introduced methods in comparison to FISTA and IHT.

\Rev{We investigate the influence of the tolerance $\tol_n$ and the number of (inner) iterations of the CG procedure performed along the IRLS iterations
in Figure~\ref{fig:singletrialAlgB_itertol_lambda} for CG-IRLS-$\lambda$, PCG-IRLS-$\lambda$, and PCGm-IRLS-$\lambda$. We see that the methods do not differ much in terms of the tolerance. In particular CG-IRLS-$\lambda$ and PCG-IRLS-$\lambda$ have nearly the same sequence of $\tol_n$, however, due to the bad conditioning, the number of inner CG iterations tremendously increases with growing number of outer IRLS iterations in CG-IRLS-$\lambda$. In contrast, the number of inner CG iterations in PCG-IRLS-$\lambda$ stays very low. A bound on the CG iterations of $\texttt{maxiter\_cg} = 4$ does only very slightly change the behavior of $\tol_n$ in PCGm-IRLS-$\lambda$ and leads to a further advantage in the computational time, as can be seen in Figure~\ref{fig:singletriallambda}.
\begin{figure}[ht!]
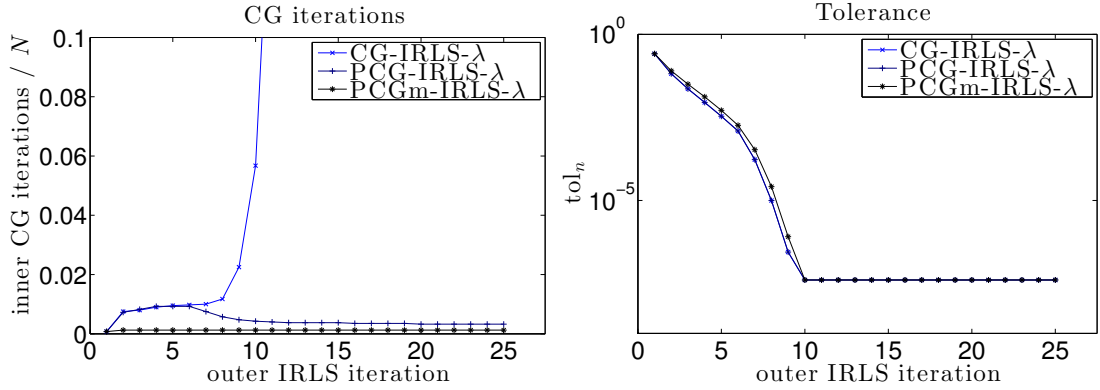

\begin{center}
\includegraphics[width=0.45\textwidth]{\figurespath{singletrialCGIRLSlambda_iter}}
\includegraphics[width=0.45\textwidth]{\figurespath{singletrialCGIRLSlambda_tol}}
\end{center}
\caption{Single trial of Setting B. Left: Number of CG iterations (divided by $N$) plotted against the outer iteration number for CG-IRLS-$\lambda$ (blue, $\times$), PCG-IRLS-$\lambda$ (dark blue, $+$), PCGm-IRLS-$\lambda$ (black, $*$). Right: Tolerance $\tol_n$ plotted against the outer iteration number for the same algorithms.}
\label{fig:singletrialAlgB_itertol_lambda}
\end{figure}}

\paragraph{Empirical test on computational time and failure rate with noisy data}
In the previous paragraph, we observed that the CG-IRLS-$\lambda$ methods are only computing efficiently solutions with a low relative error. Thus we now focus on this setting and compare the three methods PCG-IRLS-$\lambda$, PCGm-IRLS-$\lambda$, and FISTA with respect to their computational time and failure rate in recovering solutions with a relative error of $1\text{\sc{e}-}1$, $1\text{\sc{e}-}2$, and $1\text{\sc{e}-}3$. We only consider the convex case $\tau = 1$. Similarly to the procedure in Section~\ref{sec:numAlgCGIRLS}, we run these algorithms on 100 trials for each setting with the respectively chosen values of $\lambda$. In Figure~\ref{fig:EmpTestIRLSCGlambda} the upper bar plot shows the result for the mean computational time and the lower stacked bar plot shows how often a method was the fastest one. We do not present a plot of the failure rate since none of the methods failed at all. By means of the plots, we demonstrate that both PCG-IRLS-$\lambda$, and PCGm-IRLS-$\lambda$ are faster than FISTA, while PCGm-IRLS-$\lambda$ always performs best.
\begin{figure}[ht!]
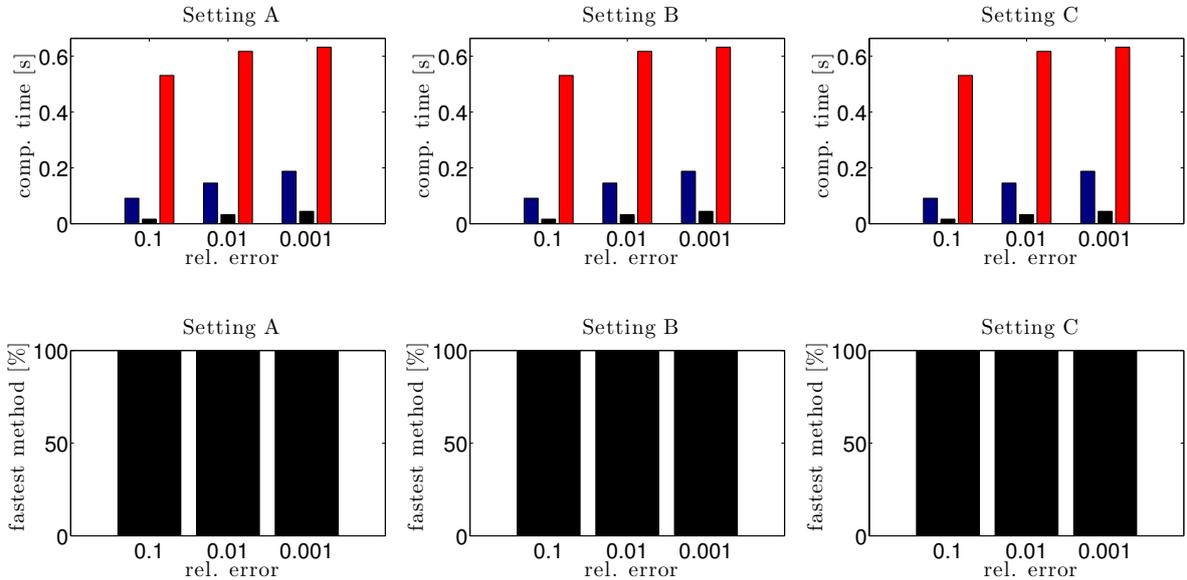

\begin{center}
\includegraphics[width=\textwidth]{\figurespath{speedtest_comptime_lambda}}
\includegraphics[width=\textwidth]{\figurespath{speedtest_fastest_lambda}}
\end{center}
\caption{Empirical test on Setting A, B, and C for the methods PCG-IRLS-$\lambda$ (blue), PCGm-IRLS-$\lambda$ (black), and FISTA (red). Upper: Mean computational time. Lower: Fastest method (in \%).}
\label{fig:EmpTestIRLSCGlambda}
\end{figure}

\paragraph{Empirical test on computational time and failure rate with noiseless data}
In the noiseless case, we compare the computational time of FISTA and PCGm-IRLS-$\lambda$ to IHT and IHT+CG-IRLSm. We set $\texttt{maxiter\_cg}=40$ for PCGm-IRLS-$\lambda$. In a first test, we run these algorithms on one trial of Setting A, and C respectively, and plot the results in Figure~\ref{fig:singletrialall}. \\
\begin{figure}[ht!]
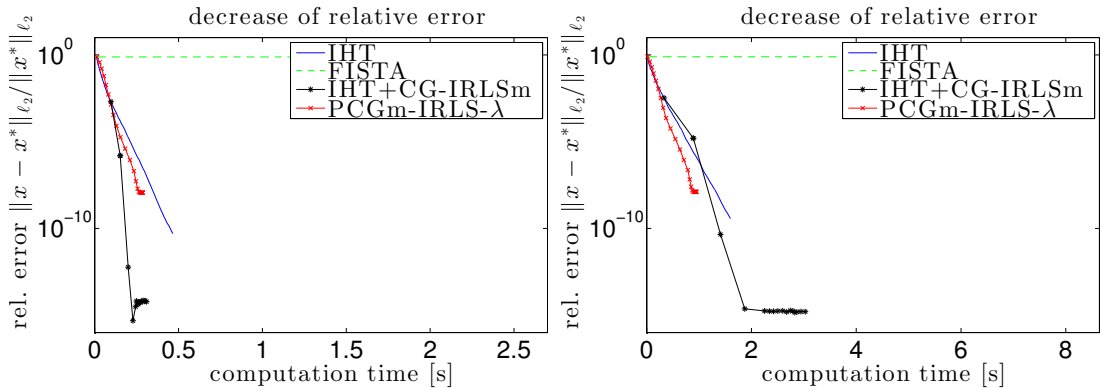

\begin{center}
\includegraphics[width=0.45\textwidth]{\figurespath{singletrialallcompare_SettingA}}
\includegraphics[width=0.45\textwidth]{\figurespath{singletrialallcompare_SettingC}}
\end{center}
\caption{Left: Setting A. Right: Setting C. Comparison of IHT (blue, $-$),  FISTA (green, $--$), IHT+CG-IRLSm (black, $*$), and PCGm-IRLS-$\lambda$ (red, $\times$).}
\label{fig:singletrialall}
\end{figure}
As already mentioned, FISTA is not suitable  \HR{for small values of $\lambda$ on the order of $m \cdot 1\text{\sc{e}-}8$} and converges then extremely slowly, but PCGm-IRLS-$\lambda$ can compete with the remaining methods. IHT+CG-IRLSm is in some settings able to outperform IHT, at least when a high accuracy is needed. \HR{PCGm-IRLS-$\lambda$ is always at least as fast as IHT with increasing relative performance gain for increasing dimensions}. 
\HR{This observation suggests the conjecture that PCGm-IRLS-$\lambda$ provides the fastest method also in rather high dimensional problems}. To validate this hypothesis numerically, we introduce two new high dimensional settings (to reach higher dimensionalities and retaining \HR{low computation times} for the extensive tests 
\HR{it is again very beneficial to use} the real cosine transform as a model for $\Phi$):
\begin{center} \begin{tabular}{|c|c|c|}
\hline
  & \textbf{Setting D} & \textbf{Setting E} \\
  \hline
N & 100000 & 1000000 \\
m & 40000  & 400000 \\
k & 1500   & 15000 \\
K & 2500   & 25000 \\
\hline
\end{tabular}
\end{center}
We run \HR{the most promising algorithms} IHT and PCGm-IRLS-$\lambda$ on a trial of the large scale settings D and E. The result, which is plotted in Figure~\ref{fig:singletrialallDE}, shows that PCGm-IRLS-$\lambda$ is able to outperform IHT \HR{in these settings} 
\HR{unless one requires} an extremely low relative error ($\leq1\text{\sc{e}-}8$), because of the error saturation effect. We confirm this outcome in a test on 100 trials for Setting D and E and present the result in Figure~\ref{fig:EmpTestallcompare}.
\begin{figure}[ht!]
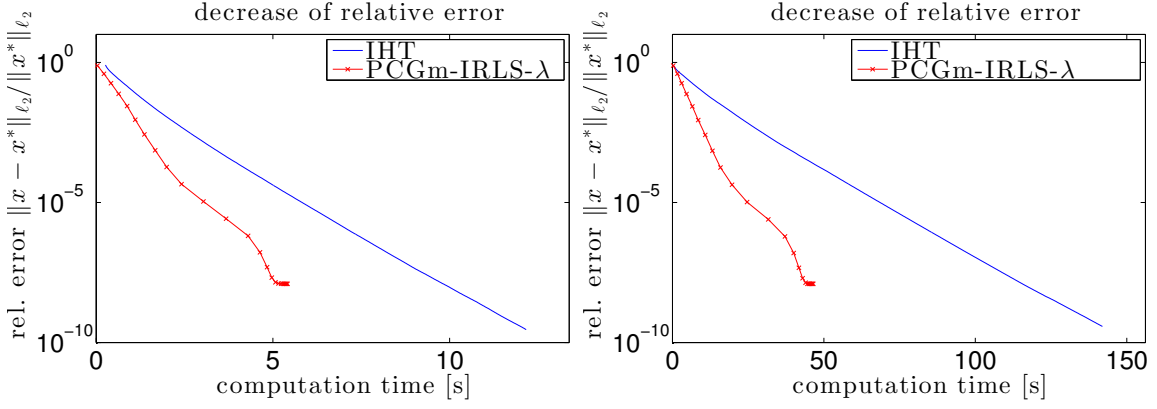

\begin{center}
\includegraphics[width=0.47\textwidth]{\figurespath{singletrialallcompare2_100k}}
\includegraphics[width=0.47\textwidth]{\figurespath{singletrialallcompare2_1m}}
\end{center}
\caption{Left: Setting D. Right: Setting E. Comparison of IHT (blue, $-$), and PCGm-IRLS-$\lambda$ (red, $\times$).}
\label{fig:singletrialallDE}
\end{figure}

\begin{figure}[ht!]
\begin{center}
\includegraphics[width=\textwidth]{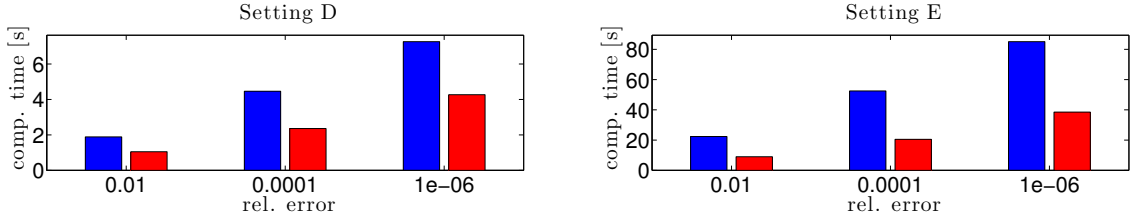}
\end{center}
\caption{Empirical test on the mean computational time of Setting D and E for the methods IHT (blue), and PCGm-IRLS-$\lambda$ (red).}
\label{fig:EmpTestallcompare}
\end{figure}

\paragraph{Dependence on $\tau$.}
In the last experiment of this paper, we are interested in the influence of the parameter $\tau$. Of course, changing $\tau$ also means modifying the problem \HR{resulting
in a different minimizer}. \HR{Due to non-convexity also spurious local minimizers may appear.} 
Therefore, we do not compare the speed of the method to FISTA. In Figure~\ref{fig:difftau}, we show the performance of Algorithm PCGm-IRLS-$\lambda$ for a single trial of Setting C and the parameters $\tau\in\{1,0.9,0.8,0.7\}$ for the noisy and noiseless setting. As reference for the error analysis, we \HR{choose} the sparse synthetic solution $x^*$, which is actually not the minimizer here. 

In both the noisy and noiseless setting, using a parameter $\tau < 1 $ improves the computational time of the algorithm. In the noiseless case, $\tau = 0.9$ seems to be a good choice, smaller values do not improve the performance. \HR{In contrast}, in the noisy setting the computational time decreases with decreasing $\tau$.
\begin{figure}[ht]
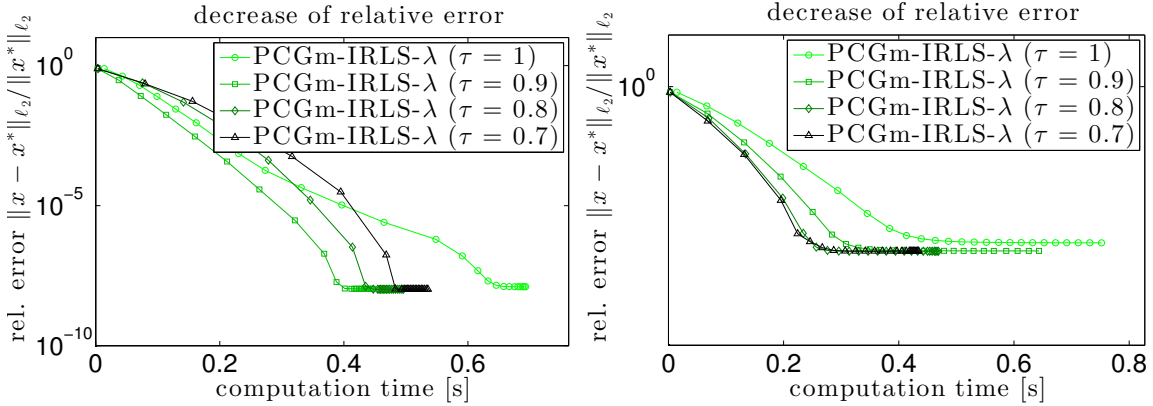

\includegraphics[width=0.47\textwidth]{\figurespath{singletrialcomp_tau}}
\includegraphics[width=0.47\textwidth]{\figurespath{singletrialcomp_tau_noisy}}
\caption{Results of Algorithm PCGm-IRLS-$\lambda$ for a single trial of Setting C for different values of $\tau$ with noise (right) and without noise (left).}
\label{fig:difftau}
\end{figure}

\begin{appendix}

\section{Proof of Lemma~\ref{lem: tangens}}
\label{app:lemmafe}
	\textquotedblleft$\Rightarrow$\textquotedblright (in the case $0 < \tau \leq 1 $) \\ 
	Let $x = x^{\etil,1}$ or $x\in\mathcal{X}_{\etil,\tau}(y)$, and $\eta\in\mathcal{N}_{\Phi}$ arbitrary. Consider the function
	\begin{equation*}
		G_{\etil,\tau}(t) \defleft f_{\etil,\tau}\left( x + t\eta \right) - f_{\etil,\tau}\left( x  \right)
	\end{equation*}
	with its first derivative 
	\begin{equation*}
		G^{\prime}_{\etil,\tau}(t) = \tau\sum\limits_{i=1}^{N}\frac{x_{i}\eta_{i} +t\eta_{i}^2}{\left[|x_{i} + t\eta_{i}|^{2} + \etil^{2}\right]^{\frac{2-\tau}{2}}}.
	\end{equation*}
	Now $G_{\etil,\tau}(0) = 0$ and from the minimization property of $f_{\etil,\tau}(x)$, $G_{\etil,\tau}(t) \ge 0$. Therefore,
	\begin{equation*}
		0 = G^{\prime}_{\etil,\tau}(0) = \sum\limits_{i=1}^{N}\frac{x_{i}\eta_{i}}{\left[x_{i}^{2} + \etil^{2}\right]^{\frac{2-\tau}{2}}} = \left\langle x,\eta\right\rangle_{\hat{w}(x,\etil,\tau)}.
	\end{equation*}
	\textquotedblleft$\Leftarrow$\textquotedblright (only in the case $\tau=1$)\\ 
	Now let $x\in\sF{\Phi}$ and $\left\langle x,\eta\right\rangle_{\hat{w}(x,\etil,1)} = 0$ for all $\eta\in\mathcal{N}_{\Phi}$. We want to show that $x$ is the minimizer of $f_{\etil,1}$ in $\sF{\Phi}$. 
	Consider the convex univariate function $g(u)\defleft [u^{2} + \etil^{2}]^{1/2}$. For any point $u_{0}$ we have from convexity that
	\begin{equation*}
		[u^{2} + \etil^{2}]^{1/2} \geq [u_{0}^{2} + \etil^{2}]^{1/2} + [u_{0}^{2} + \etil^{2}]^{-1/2}u_{0}(u-u_{0})
	\end{equation*} 
	because the right-hand-side is the linear function which is tangent to $g$ at $u_{0}$. It follows, that for every point $v\in\sF{\Phi}$ we have
	\begin{equation*}
		f_{\etil,1}(v) \geq f_{\etil,1}(x) + \sum\limits_{i=1}^{N}{[x_{i}^{2} + \etil^{2}]^{-1/2}x_{i}(v_{i} - x_{i})} = f_{\etil,1}(x) + \left\langle x, v-x\right\rangle_{\hat{w}(x,\etil,1)} = f_{\etil,1}(x),
	\end{equation*}
	where we have used the orthogonality condition and the fact that $(v - x) \in \mathcal{N}_{\Phi}$. Since $v$ was chosen arbitrarily, $x = x^{\etil,1}$ as claimed.
\end{appendix}

\begin{acknowledgements}
 Massimo Fornasier  acknowledges the support of the ERC-Starting Grant HDSPCONTR ``High-Dimensional Sparse Optimal Control'' and the DFG Project ``Optimal Adaptive Numerical Methods for p-Poisson Elliptic equations''. Steffen Peter acknowledges the support of the Project ``SparsEO: Exploiting the Sparsity in Remote Sensing for Earth Observation'' funded by Munich Aerospace. Holger Rauhut would like to thank the
 European Research Council (ERC) for support through the Starting Grant StG 258926 SPALORA (Sparse and Low Rank Recovery) and the Hausdorff Center for Mathematics
 at the University of Bonn where this project has started.
 \end{acknowledgements}


\bibliography{IRLSCG.bbl}

\end{document}